\newtheorem{teo}{Theorem}[section]
\newtheorem{lema}{Lemma}[section]
\newtheorem{obs}[teo]{Remark}
\newtheorem{defi}{Definition}[section]
\newtheorem{ex}[teo]{Example}
\newtheorem*{nota}{Notation}
\newtheorem{remark}[teo]{Remark}
\def\corank{\operatorname{corank}}
\def\rank{\operatorname{rank}}
\def\dim{\operatorname{dim}}
\def\cof{\operatorname{cof}}
\def\Jac{\operatorname{Jac}}
\def\z{\Omega}
\def\o{\omega}
\def\f{\varphi}
\def\x{\xi}
\def\a{\alpha}
\def\b{\beta}
\def\l{\ell}
\def\g{\gamma}
\def\L{\mathcal{L}}
\def\R{\mathbb{R}}
\def\m{\textsl{m}}
\def\M{\textbf{M}}
\def\S{\Sigma}
\def\d{\Delta}
\def\Rnm{\R^n\setminus\{\vec{0}\}}
\def\zr1{z_{|_{A_1(V)}}}
\newcommand{\ffrac}{\displaystyle\frac}
\begin{document}

\title{Morin singularities of coframes and frames}

\author{Camila M. \textsc{Ruiz}}
%
%


\address{Universidade de São Paulo\\
Instituto de Ciências Matemática e de Computação \\
Avenida Trabalhador São-carlense, 400 - Centro\\
CEP: 13566-590 - São Carlos - SP, Brasil}
\email{cmruiz@icmc.usp.br}
\maketitle

\begin{abstract}
Inspired by the properties of an $n$-frame of gradients $(\nabla f_1, \ldots, \nabla f_n)$ of a Morin map $f:M\rightarrow\mathbb{R}^n$, with $\dim M\geq n$, we introduce the notion of Morin singularities in the context of singular $n$-coframes and singular $n$-frames. We also study the singularities of generic 1-forms associated to a Morin $n$-coframe, in order to generalize a result of T. Fukuda \cite[Theorem 1]{Fukuda}, which establishes a modulo 2 congruence between the Euler characteristic of a compact manifold $M$ and the Euler characteristics of the singular sets of a Morin map defined on $M$, to the case of Morin $n$-coframes and Morin $n$-frames.
\end{abstract}

\section{Introduction}\label{Introduction}


{Morin maps are maps that only admit Morin singularities. It is well known that these singularities are stable, and conversely, that stable map-germs which have corank 1 are Morin singularities. Therefore, Morin singularities are fundamental and frequently arise as singularities of maps from one manifold to another, as observed by K. Saji in \cite{Saji1}. Morin singularities have been studied by many authors in different contexts as \cite{Morin2,Ando,Fukuda,Saeki,SaekiSakuma}, and more recently \cite{KalmarTerpai,SzaboSzucsTerpai,Szucs,InaIshiKawaThang,Dutertrefukui,Saji1,Saji2,Saji3,Ruiz1}. In particular, papers of J.M. Èlia\v{s}berg \cite{Eliasberg}, J.R. Quine \cite{Quine}, T. Fukuda \cite{Fukuda}, O. Saeki \cite{Saeki} and N. Dutertre and T. Fukui \cite{Dutertrefukui} investigate relations between the topology of a manifold and the topology of the critical locus of maps with Morin singularities.}

Let $f:M^m\rightarrow\R^n$ be a smooth Morin map defined on an $m$-dimensional {Riemannian manifold} $M$, with $m\geq n$. The singular points of $f=(f_1, \ldots, f_n)$ are the points $x\in M$, such that the rank of the derivative $df(x)$ is equal to $n-1$. Then, taking the gradient of each coordinate function $f_1, \ldots, f_n$, we obtain a singular $n$-frame $(\nabla f_1(x), \ldots, \nabla f_n(x))$ defined on $M$ whose singular locus $\S$ is given by $$\S=\{x\in M \, | \, \rank(\nabla f_1(x), \ldots, \nabla f_n(x))=n-1\}.$$ 

It is well known that the singular sets of $f$, $A_k(f)$ and $\overline{A_k(f)}$ ($k=1, \ldots, n-1$), are submanifolds of $M$ of dimension $n-k$, such that $\overline{A_k(f)}=\cup_{i\geq k}A_i(f)$ and
\begin{equation*}\rank df_{|_{\overline{A_k(f)}}}(x)=\left\{\begin{array}{ll}
n-k, &\text{ if } x\in A_k(f);\\
n-k-1, &\text{ if } x\in \overline{A_{k+1}(f)};
\end{array}\right.\end{equation*} (see \cite{Fukuda}, \cite{Morin2}, \cite{Saeki} for Morin singularities). This means that the intersection of the vector space spanned by $\nabla f_1(x), \ldots, \nabla f_n(x)$ with the normal space to $\overline{A_k(f)}$ at $x$ is a subspace of dimension: $$\dim(\langle\nabla f_1(x), \ldots, \nabla f_n(x)\rangle\cap N_x\overline{A_k(f)})=\left\{\begin{array}{ll}
k-1, &\text{ if } x\in A_k(f);\\
k, &\text{ if } x\in \overline{A_{k+1}(f)}.
\end{array}\right.$$ In particular, if $x\in A_k(f)$ then $\langle\nabla f_1(x), \ldots, \nabla f_n(x)\rangle\pitchfork N_x\overline{A_k(f)}.$

{Furthermore, if $x\in \overline{A_{k+1}(f)}$} and $\{z_1(x), \ldots, z_{n-k-1}(x)\}$ is a basis of a vector space supplementary to $\langle\nabla f_1(x), \ldots, \nabla f_n(x)\rangle\cap {N_x\overline{A_{k}(f)}}$ in $\langle\nabla f_1(x), \ldots, \nabla f_n(x)\rangle$ then $$\dim(\langle z_1(x), \ldots, z_{n-k-1}(x)\rangle\cap N_x\overline{A_{k+1}(f)})=\left\{\begin{array}{ll}
0, &\text{ if } x\in A_{k+1}(f);\\
1, &\text{ if } x\in \overline{A_{k+2}(f)}.
\end{array}\right.$$

Based on properties of an $n$-frame of gradients $(\nabla f_1, \ldots, \nabla f_n)$ of a Morin map $f$, in this paper we introduce the notion of Morin singular points of type $A_k$ in the context of singular $n$-frames that are not necessarily gradients {(Definition \ref{def:nframe})} and $n$-coframes that are not necessarily differentials {(Definition \ref{def:ncoframe})}. To do this, in Section \ref{s1} we consider an $n$-coframe $\o=(\o_1, \ldots, \o_n)$ with corank 1 {(Definition \ref{corank1})} defined on a smooth $m$-dimensional manifold $M$, with $m\geq n$, and we proceed by induction on $k$, for $k=1,\ldots,n$, in order to define Morin singular sets $\S^k(\o)$ and $A_k(\o)$ {(Definitions \ref{defs1}, \ref{defiSk} and \ref{Morinsing})}. We will say that $\o$ is a Morin $n$-coframe {(Definition \ref{def:ncoframe})} if it admits only Morin singular points, that is, if each singular point $x\in M$ of $\o$ belongs to $A_k(\o)$, for some $k=1, \ldots, n$ {(see Remark \ref{singak})}. {In particular, we show that the Morin singular sets $A_k(\o)$ and $\S^k(\o)=\overline{A_k(\o)}$ ($k=1, \ldots, n$) are smooth submanifolds of $M$ of dimension $n-k$ (Lemmas \ref{dimsigmaum} and \ref{skdimension}), such that $\overline{A_k(\o)}=\cup_{i\geq k}A_i(f)$ (Remark \ref{akclosure}) and in Lemmas \ref{eqlocalSk} and \ref{eqloc3} we exhibit equations that define locally the singular sets $\S^{k}(\o)$.}

The definition of Morin singularities for $n$-coframes can be analogously adapted to $n$-frames as follows. When considering a smooth manifold $M$, differential 1-forms are naturally dual to vector fields, more specifically, if we fix a Riemannian metric on $M$ then there exists an isomorphism between the tangent and cotangent bundles of M, so that vector fields and 1-forms can be identified. To illustrate this notion, we give some examples {of Morin $n$-frames} in the end of Section \ref{s1}.

Let $L\in\R P^{n-1}$ be a straight line in $\R^n$ and let $\pi_L:\R^n\rightarrow L$ be the orthogonal projection to $L$. In \cite{Fukuda}, T. Fukuda applied Morse theory and well known properties of singular sets $A_k(f)$ of a Morin map $f:M\rightarrow\R^n$ to study the critical points of mappings $\pi_L\circ f:M\rightarrow L$ and their restrictions to the singular sets $\pi_L\circ f|_{A_k(f)}$ and $\pi_L\circ f|_{\overline{A_k(f)}}$. Similarly, in Sections \ref{s2} and \ref{s3} of this paper, we investigate the zeros of a generic 1-form $$\x(x)=\displaystyle\sum_{i=1}^{n}{a_i\o_i(x)}$$ associated to a Morin $n$-coframe $\o=(\o_1, \ldots, \o_n)$ and we verify that $\x$, $\x_{|_{A_k(\o)}}$ and $\x_{|_{\overline{A_k(\o)}}}$ have properties that are analogous to the properties of the generic orthogonal projections $\pi_L\circ f(x)$ associated to a Morin map $f=(f_1, \ldots, f_n)$ and of their restrictions. {More precisely, let $a=(a_1, \ldots, a_n)\in\R^n\setminus\{\vec{0}\}$ and let $\o=(\o_1, \ldots, \o_n)$ be a Morin $n$-coframe defined on a manifold $M$, in Section \ref{s2} we prove that if $p\in M$ is a zero of $\x(x)=\sum_{i=1}^{n}{a_i\o_i(x)}$ then $p\in\S^1(\o)$ and $p$ is a zero of $\x_{|_{\S^1(\o)}}$ (Lemma \ref{zeroszsobresigma1}). In Lemma \ref{lemazerosrestricoes}, we show that, for $k=0,\ldots,n-2$, if $p\in A_{k+1}(\o)$ then $p$ is a zero of $\x_{|_{\S^{k+1}(\o)}}$ if and only if $p$ is a zero of $\x_{|_{\S^{k}(\o)}}$. And, in Lemma \ref{ptosansaozeros} we verify that if $p\in A_n(\o)$ then $p$ is a zero of the restriction $\x_{|_{\S^{n-1}(\o)}}$. Let $Z(\x_{|_{\S^k(\o)}})$ be the zero set of the restriction of the 1-form $\x$ to $\S^k(\o)$, we also prove in Lemmas \ref{lemainterzeroscomsigma2} and \ref{ptscrticrestasigmak} that for almost every $a\in\R^n\setminus\{\vec{0}\}$, $Z(\x_{|_{\S^k(\o)}})\cap\S^{k+2}(\o)=\emptyset$, for $k=0, \ldots, n-2$.}

{In Section \ref{s3}, in Lemmas \ref{nondegeneratexi}, \ref{zerosnaodegdeak}, \ref{zerosnaodegdeaum} and \ref{nondegeneratexisk}, we prove that generically the 1-form $\x(x)$ and its restrictions $\x_{|_{\S^k(\o)}}$ and $\x_{|_{A_k(\o)}}$ admit only non-degenerate zeros. We also show that, for $k=0,\ldots, n-2$, if $p\in A_{k+1}(\o)$ is a zero of $\x_{|_{\S^{k+1}(\o)}}$then, for almost every $a\in\R^n\setminus\{\vec{0}\}$, $p$ is a non-degenerate zero of $\x_{|_{\S^{k+1}(\o)}}$ if and only if $p$ is a non-degenerate zero of $\x_{|_{\S^{k}(\o)}}$ (Lemmas \ref{nondegenerateequivalenceA1} and \ref{naodegkekmaisum}). Finally, in Lemma \ref{naodegan} we verify that, for almost every $a\in\R^n\setminus\{\vec{0}\}$, if $p\in A_n(\o)$ then $p$ is a non-degenerate zero of $\x_{|_{\S^{n-1}(\o)}}$.}

As a consequence of these results, we obtain a generalization of Fukuda's Theorem \cite[Theorem 1]{Fukuda} for the case of Morin $n$-coframes (Theorem \ref{fukudaparacampos}). { More precisely, we prove in Theorem \ref{fukudaparacampos} that if $\o=(\o_1, \ldots, \o_n)$ is a Morin $n$-coframe defined on an $m$-dimensional compact manifold $M$ then $$\chi(M)\equiv\displaystyle\sum_{k=1}^{n}{\chi(\overline{A_k(\o)})} \mod 2,$$ where $\chi(M)$ denotes the Euler characteristic of $M$.} We end the paper with this generalized theorem, whose proof uses the classical Poincaré-Hopf Theorem for 1-forms.  

The author would like to express her sincere gratitude to Nicolas Dutertre and Nivaldo de Góes Grulha Júnior for fruitful discussions and valuable comments that resulted in this work.
The author was supported by CNPq, "Conselho Nacional de Desenvolvimento Científico e Tecnológico", Brazil (grants 143479/2011-3 and 209531/2014-2).


\section{The Morin $n$-coframes}\label{s1}

Let $M$ be a smooth manifold of dimension $m$ and $\o=(\o_1,\ldots,\o_n)$ be a (singular) $n$-coframe, that is, a set of $n$ smooth 1-forms defined on $M$, with $m\geq n$: \begin{equation*}\begin{array}{llll}
\o:& M&\rightarrow &{T^{\ast}M}^n\\
&x&\mapsto&(x,\o_1(x),\cdots,\o_n(x))
\end{array}
\end{equation*} where ${T^{\ast}M}^n=\{(x,\f_1,\ldots,\f_n) \ | \ x\in M; \ \f_i\in T^{\ast}_xM, i=1,\ldots,n\}$ is the ``$n$-cotangent bundle'' of $M$. Note that $T^{\ast}M^n$ is a smooth manifold of dimension $m(n+1)$, because it is locally diffeomorphic to $U\times M_{m,n}(\R)$, 
where $U\subset\mathbb{\R}^m$ is an open set and $M_{m,n}(\R)$ denotes the space of matrices of dimension $m\times n$ with real coefficients.

\begin{lema}\label{lemaTMnum} Let $T^{\ast}M^{n,n-1}\subset T^{\ast}M^n$ be the subset defined by $$T^{\ast}M^{n,n-1}=\left\{(x,\f_1,\ldots,\f_n)\in T^{\ast}M^n \ | \ \rank(\f_1,\ldots,\f_n)=n-1 \right\}.$$ Then $T^{\ast}M^{n,n-1}$ is a submanifold of $T^{\ast}M^n$ of dimension $n(m+1)-1$.
\end{lema}

\begin{proof} Let $M_{m,n}^{n-1}(\R)$ be the submanifold of $M_{m,n}(\R)$ of codimension $m-n+1$ given by the matrices of rank $n-1$ in $M_{m,n}(\R)$, then $T^{\ast}M^{n,n-1}$ is locally diffeomorphic to $U\times M_{m,n}^{n-1}(\R)$, for some open subset $U\subset\mathbb{\R}^m$. Hence, $T^{\ast}M^{n,n-1}$ is a submanifold of $T^{\ast}M^n$ and $\dim(T^{\ast}M^{n,n-1})=n(m+1)-1$.
\end{proof}

\begin{defi}\label{corank1}
We say that $\o=(\o_1,\ldots,\o_n)$ has $\corank 1$ if the following properties hold:
\begin{enumerate}[(a)]
\item $\o\pitchfork T^{\ast}M^{n,n-1}$ in $T^{\ast}M^n$;
\item $\o^{-1}(T^{\ast}M^{n,\leq n-2})=\emptyset$;
\end{enumerate}
where $T^{\ast}M^{n,\leq n-2}=\left\{(x,\f_1,\ldots,\f_n)\in T^{\ast}M^n \ | \ \rank(\f_1,\ldots,\f_n)\leq n-2 \right\}$.
\end{defi}

Note that by Definition \ref{corank1}, if an $n$-coframe $\o=(\o_1,\ldots,\o_n)$ has $\corank 1$ then, for each $x\in M$, $\rank(\o_1(x),\ldots,\o_n(x))$ is either equal to $n$ or equal to $n-1$.

\begin{defi}\label{defs1}
Let $\o=(\o_1,\ldots,\o_n)$ be an $n$-coframe with $\corank 1$. The singular set of $\o$, $\Sigma^1(\o)$, is the set of points $x\in M$ at which the rank is not maximal, that is $$\Sigma^1(\o)=\{x\in M \ | \ \rank(\o_1(x),\ldots,\o_n(x))=n-1\}.$$
\end{defi}

\begin{lema}\label{dimsigmaum}
If $\o$ is an $n$-coframe with $\corank 1$ then $\Sigma^1(\o)$ is either the empty set or a submanifold of $M$ of dimension $n-1$.
\end{lema}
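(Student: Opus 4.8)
The plan is to recognize $\Sigma^1(\o)$ as the preimage of the submanifold $T^{\ast}M^{n,n-1}$ under the coframe map $\o$, and then invoke the standard transversality preimage theorem. First I would observe that condition (b) of Definition \ref{corank1} forces $\rank(\o_1(x),\ldots,\o_n(x))\in\{n-1,n\}$ for every $x\in M$, since the stratum of rank at most $n-2$ is never met. Hence $\rank(\o_1(x),\ldots,\o_n(x))=n-1$ holds if and only if $\o(x)\in T^{\ast}M^{n,n-1}$, which gives the set-theoretic identification $$\Sigma^1(\o)=\o^{-1}(T^{\ast}M^{n,n-1}).$$

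Next I would pin down the codimension of $T^{\ast}M^{n,n-1}$ in $T^{\ast}M^n$. By Lemma \ref{lemaTMnum} we have $\dim T^{\ast}M^{n,n-1}=n(m+1)-1$, while $\dim T^{\ast}M^n=m(n+1)$, so that $\codim_{T^{\ast}M^n}T^{\ast}M^{n,n-1}=m(n+1)-\bigl(n(m+1)-1\bigr)=m-n+1$. Since condition (a) of Definition \ref{corank1} asserts precisely that $\o\pitchfork T^{\ast}M^{n,n-1}$, the transversality preimage theorem applies to the smooth map $\o:M\rightarrow T^{\ast}M^n$: whenever $\o^{-1}(T^{\ast}M^{n,n-1})$ is nonempty it is a submanifold of $M$ whose codimension equals that of the target stratum, namely $m-n+1$. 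Combining this with the identification of the previous paragraph, $\Sigma^1(\o)$ is either empty or a submanifold of $M$ of dimension $m-(m-n+1)=n-1$, as claimed.

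This argument is a direct application of the standard differential-topological machinery, so I do not anticipate a genuine obstacle. The only two points that require care are the set-theoretic identification $\Sigma^1(\o)=\o^{-1}(T^{\ast}M^{n,n-1})$, which depends crucially on hypothesis (b) to exclude the rank $\leq n-2$ locus so that the preimage consists exactly of the corank-one points, and the bookkeeping of the codimension, both of which follow at once from the dimension count recorded in Lemma \ref{lemaTMnum}.
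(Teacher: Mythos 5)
Your proposal is correct and follows exactly the paper's own argument: identify $\Sigma^1(\o)$ as $\o^{-1}(T^{\ast}M^{n,n-1})$, invoke the transversality hypothesis (a) of Definition \ref{corank1}, and read off the codimension $m-n+1$ from Lemma \ref{lemaTMnum}. The only difference is that you make explicit the role of hypothesis (b) in the set-theoretic identification and the codimension bookkeeping, which the paper leaves implicit.
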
 
\begin{proof} Note that $\Sigma^1(\o)=\o^{-1}(T^{\ast}M^{n,n-1})$ and that $\o\pitchfork T^{\ast}M^{n,n-1}$. Thus, if $\Sigma^1(\o)\neq\emptyset$ then $\Sigma^1(\o)$ is a submanifold of $M$ of codimension $m-n+1$, that is, $\dim(\Sigma^1(\o))=n-1$.
\end{proof}

Let $\o=(\o_1,\ldots,\o_n):M\rightarrow T^{\ast}M^n$ be an $n$-coframe with $\corank 1$ defined on an $m$-dimensional smooth manifold $M$. Next, we will define the subsets $A_k(\o)$ and $\S^{k+1}(\o)$ of $M$, for $k=1,\ldots,n$. To do this we will proceed by induction on $k$ starting from the definition of the singular set $\S^1(\o)$ .\\

\begin{nota} Let us denote by $\S^0(\o)$ the manifold $M$ and by $N_x^{\ast}\S^0(\o)=\{0\}$ the set that contains only the null 1-form of $T_x^{\ast}M$. { Moreover, if $S\subset M$ is a smooth submanifold of $M$, let us denote by $N^{\ast}_xS$ the set $N^{\ast}_xS=\{\psi\in T_{x}^{\ast}M \, | \, \psi(T_xS)=0\}.$}
\end{nota}
We know that $\S^1(\o)=\{x\in\S^0(\o) \, | \, \rank(\o_1(x),\ldots,\o_n(x))=n-1\}$ and that $\dim(\S^1(\o))=n-1$. In particular, $$p\in \S^1(\o)\Rightarrow\dim(\langle\o_1(p),\ldots,\o_n(p)\rangle\cap N_p^{\ast}\S^0(\o))=0,$$ where $\langle\o_1(p),\ldots,\o_n(p)\rangle$ is the vector subspace of $T_p^{\ast}M$ spanned by the 1-forms $\o_1(p),\ldots,\o_n(p)$.

Let us suppose that $\S^i(\o)$ is defined for $i=1,\ldots, k-1$ so that $\S^i(\o)$ is a smooth submanifold of $M$ of dimension $n-i$, $\S^{i}(\o)\subset\S^{i-1}(\o)$ and, { for $i=2,\ldots, k-1$}, $$p\in \S^i(\o)\Leftrightarrow\dim(\langle\o_1(p),\ldots,\o_n(p)\rangle\cap N_p^{\ast}\S^{i-1}(\o))=i-1,$$ where  $\S^i(\o)$ is locally given by $$\mathcal{U}\cap\S^i(\o)=\{x\in\mathcal{U} \, | \, F_1(x)=\ldots= F_{m-n+i}(x)=0\}$$ {and $$\mathcal{U}\cap\S^{i-1}(\o)=\{x\in\mathcal{U} \, | \, F_1(x)=\ldots= F_{m-n+i-1}(x)=0\},$$} for some open neighborhood $\mathcal{U}\subset M$ and smooth functions $F_i:\mathcal{U}\rightarrow\R$ whose derivatives $dF_i(x)\in T_{x}^{\ast}M$ are linearly independent for each $x\in\S^i(\o)\cap\mathcal{U}$. Also, $N_{x}^{\ast}\S^{i-1}(\o)$ is the vector subspace of $T_{x}^{\ast}M$ spanned by these derivatives, that is, $N_{x}^{\ast}\S^{i-1}(\o)=\langle dF_1(x), \ldots, dF_{m-n+i-1}(x)\rangle$.


We set $r=n-k+1$ and $(x,\f)=(x,\f_1,\ldots,\f_{r})$. In order to define $\S^k(\o)$ we first consider: $$T_{\S^{k-1}}^{\ast}M^{r}=\{(x,\f) \ | \ x\in\S^{k-1}(\o); \f_1,\ldots,\f_{r}\in T_x^{\ast}M\}$$ and 
\begin{equation*}{\renewcommand{\arraystretch}{1}\begin{array}{r}
N_{\S^{k-1}}^{\ast}M^{r}=\{(x,\f)\in T_{\S^{k-1}}^{\ast}M^{r} \ | \rank(\f_1,\ldots,\f_{r})=r,\\
\dim(\langle\f_1,\ldots,\f_{r}\rangle\cap N_x^{\ast}\S^{k-1}(\o))=1\}.\end{array}}
\end{equation*}


\begin{lema} $T_{\S^{k-1}}^{\ast}M^{r}$ is a smooth manifold of dimension $mr+r$.
\end{lema}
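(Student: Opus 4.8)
The plan is to recognize $T_{\S^{k-1}}^{\ast}M^{r}$ as the total space of the restriction to the submanifold $\S^{k-1}(\o)$ of the ``$r$-cotangent bundle'' $T^{\ast}M^r$, and then to read off its dimension from the base and fiber dimensions. First I would note that, exactly as in the discussion of $T^{\ast}M^n$ at the beginning of this section, $T^{\ast}M^r=\{(x,\f_1,\ldots,\f_r)\mid x\in M,\ \f_i\in T_x^{\ast}M\}$ is a smooth manifold of dimension $m(r+1)$: it is locally diffeomorphic to $U\times M_{m,r}(\R)$ for open sets $U\subset\R^m$, and the canonical projection $\pi\colon T^{\ast}M^r\to M$, $\pi(x,\f_1,\ldots,\f_r)=x$, is a smooth submersion (a vector bundle projection).

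Next I would observe that, by construction, $T_{\S^{k-1}}^{\ast}M^{r}=\pi^{-1}(\S^{k-1}(\o))$. By the inductive hypothesis, $\S^{k-1}(\o)$ is a smooth submanifold of $M$ of dimension $n-(k-1)=n-k+1=r$, hence of codimension $m-r$ in $M$. Since the preimage of a submanifold under a submersion is again a submanifold of the same codimension, $\pi^{-1}(\S^{k-1}(\o))$ is a smooth submanifold of $T^{\ast}M^r$ with $\codim=m-r$. Computing the dimension then gives
$$\dim T_{\S^{k-1}}^{\ast}M^{r}=\dim T^{\ast}M^r-(m-r)=m(r+1)-(m-r)=mr+r,$$
as claimed.

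Alternatively, and more explicitly, I would build charts directly: around a point $x_0\in\S^{k-1}(\o)$ choose a submanifold chart $(U,\psi)$ of $M$ with $\psi(U\cap\S^{k-1}(\o))=\psi(U)\cap(\R^r\times\{0\})$, and trivialize $T^{\ast}M|_U$ by the coordinate differentials, so that $T^{\ast}M^r|_U\cong U\times(\R^m)^r$. Restricting to $U\cap\S^{k-1}(\o)$ yields a chart of $T_{\S^{k-1}}^{\ast}M^{r}$ onto an open subset of $\R^r\times\R^{mr}$, again of dimension $mr+r$, and these charts overlap smoothly because both the submanifold charts and the cotangent trivializations do.

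The computation itself is routine, so there is no genuine analytic obstacle; the only point requiring care is the honest use of the inductive hypothesis, namely that $\S^{k-1}(\o)$ is already known to be a smooth submanifold of $M$ of dimension $r$ (established for $k-1=1$ in Lemma \ref{dimsigmaum} and assumed inductively for larger indices). Everything else follows from the standard fact that restricting a smooth fiber bundle to a submanifold of the base produces a smooth fiber bundle over that submanifold.
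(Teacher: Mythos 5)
Your proof is correct and is essentially the paper's own argument: the paper simply notes that, by the inductive hypothesis, $\S^{k-1}(\o)$ is a smooth $r$-dimensional submanifold of $M$, so that $T_{\S^{k-1}}^{\ast}M^{r}$ is locally diffeomorphic to $V\times M_{m,r}(\R)$ for open $V\subset\R^{r}$, giving dimension $mr+r$ — which is exactly your explicit-chart construction. Your bundle-restriction/submersion-preimage phrasing is just a repackaging of the same computation.
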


\begin{proof}
By the induction hypothesis, $\S^{k-1}(\o)$ is a smooth submanifold of $M$ of dimension $r$. Then, there exists an open subset $V\subset\R^{r}$ so that $T_{\S^{k-1}}^{\ast}M^{r}$ is locally diffeomorphic to $V\times M_{m,r}(\R)$. Thus, $T_{\S^{k-1}}^{\ast}M^{r}$ is a smooth manifold and $\dim(T_{\S^{k-1}}^{\ast}M^{r})=mr+r.$ \end{proof}

\begin{lema} \label{dimNSM} $N_{\S^{k-1}}^{\ast}M^{r}$ is a hypersurface of $T_{\S^{k-1}}^{\ast}M^{r}$, that is, a submanifold of dimension $mr+r-1$.
\end{lema}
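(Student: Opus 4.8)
The plan is to work in a local trivialization of $T_{\S^{k-1}}^{\ast}M^{r}$ and to reduce the two conditions defining $N_{\S^{k-1}}^{\ast}M^{r}$ to a single determinantal condition on a square block of covector coordinates. First I would fix a point $(x_0,\f^0)\in N_{\S^{k-1}}^{\ast}M^{r}$ and choose, on a neighborhood $\mathcal{U}$ of $x_0$, a smooth coframe $e_1,\ldots,e_m$ of $M$ adapted to $\S^{k-1}(\o)$, in the sense that $e_1(x),\ldots,e_{m-r}(x)$ span $N_x^{\ast}\S^{k-1}(\o)$ for every $x\in\mathcal{U}\cap\S^{k-1}(\o)$. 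This is possible because, by the induction hypothesis, $N_x^{\ast}\S^{k-1}(\o)=\langle dF_1(x),\ldots,dF_{m-r}(x)\rangle$ has constant dimension $m-r$ and the $dF_i$ are smooth and linearly independent, so one takes $e_i=dF_i$ for $i\leq m-r$ and completes to a coframe. Writing $\f_j=\sum_{i=1}^m a_{ij}e_i(x)$, this coframe identifies the local trivialization of $T_{\S^{k-1}}^{\ast}M^{r}$ with $V\times M_{m,r}(\R)$ via $(x,\f)\mapsto(x,A)$, where $A=(a_{ij})$ and $V\subset\R^{r}$ is identified with $\mathcal{U}\cap\S^{k-1}(\o)$.

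Next I would read off the two defining conditions of $N_{\S^{k-1}}^{\ast}M^{r}$ in these coordinates. Split $A=\left(\begin{smallmatrix}A_1\\ A_2\end{smallmatrix}\right)$ into the top $(m-r)\times r$ block $A_1$ and the bottom $r\times r$ block $A_2$. Since $N_x^{\ast}\S^{k-1}(\o)=\langle e_1,\ldots,e_{m-r}\rangle$, a vector $Ac$ of $W:=\langle\f_1,\ldots,\f_{r}\rangle$ lies in $N_x^{\ast}\S^{k-1}(\o)$ exactly when its last $r$ coordinates vanish, that is, when $A_2c=0$; and because $\rank A=r$ the map $c\mapsto Ac$ is injective, so $\dim\big(W\cap N_x^{\ast}\S^{k-1}(\o)\big)=\dim\Ker A_2=r-\rank A_2$. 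Hence, on the open set $\{\rank A=r\}$, the condition $\dim(\langle\f_1,\ldots,\f_{r}\rangle\cap N_x^{\ast}\S^{k-1}(\o))=1$ is equivalent to $\rank A_2=r-1$, and $N_{\S^{k-1}}^{\ast}M^{r}$ corresponds locally to $V\times\{A\in M_{m,r}(\R)\ |\ \rank A=r,\ \rank A_2=r-1\}$.

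Finally I would invoke the standard fact that the set of $r\times r$ matrices of rank exactly $r-1$ is a smooth submanifold of $M_{r,r}(\R)$ of codimension $(r-(r-1))^2=1$: concretely, it is the regular part of the hypersurface $\{\det A_2=0\}$, since at a matrix of rank $r-1$ the differential of $\det$ is given by the cofactor matrix, which is nonzero, so $\det$ is a submersion there. Consequently $\{A\ |\ \rank A=r,\ \rank A_2=r-1\}$ is an open subset of this hypersurface, hence a submanifold of $M_{m,r}(\R)$ of codimension $1$ and dimension $mr-1$, and in the trivialization $N_{\S^{k-1}}^{\ast}M^{r}$ is its product with $V$, of dimension $r+(mr-1)=mr+r-1$. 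Since being a submanifold is a local property, this shows $N_{\S^{k-1}}^{\ast}M^{r}$ is a submanifold of $T_{\S^{k-1}}^{\ast}M^{r}$ of codimension $1$, i.e. a hypersurface of dimension $mr+r-1$, as claimed. The only genuine subtlety is the adapted choice of coframe, which is precisely what makes the condition on $A_2$ independent of $x$ and turns the local model into an honest product; the rest is the standard determinantal-submanifold computation.
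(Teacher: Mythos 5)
Your proof is correct, but it is organized differently from the paper's. The paper works directly in the ambient chart $\mathcal{U}\times\mathcal{V}$ with the raw coordinates $\f_i^j$: it defines $N_{\S^{k-1}}^{\ast}M^{r}$ by the equations $F_1=\ldots=F_{m-r}=\d=0$, where $\d(x,\f)=\det(dF_1(x),\ldots,dF_{m-r}(x),\f_1,\ldots,\f_{r})$ is an $m\times m$ determinant mixing the base-point data with the covector coordinates, and then verifies independence of the differentials by computing $\partial\d/\partial\f_{r}^{m}$ as a cofactor that is nonvanishing on a suitably chosen neighborhood. You instead straighten the conormal bundle first, choosing a coframe adapted to $\S^{k-1}(\o)$ so that the intersection condition becomes the purely matrix-theoretic condition $\rank A_2=r-1$ on the bottom $r\times r$ block of the coefficient matrix, independent of the base point; the local model is then an honest product of $V$ with an open piece of the classical determinantal hypersurface $\{\det A_2=0,\ \rank A_2=r-1\}$ in $M_{m,r}(\R)$, whose smoothness and codimension you quote as a standard fact (itself proved by the same cofactor argument the paper carries out by hand). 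What your route buys is a cleaner separation of the base and fibre directions and no need to track the $x$-dependence in the transversality computation; it also makes explicit that the rank-one intersection locus is cut out inside $\{\det A_2=0\}$ precisely where that determinant is submersive, a point the paper handles implicitly via its choice of nonvanishing $(m-1)\times(m-1)$ minor. What the paper's route buys is the explicit defining function $\d(x,\f)$ in the original coordinates, which is then reused verbatim in Lemma \ref{eqlocalSk} to produce the local equation $\d_k$ for $\S^{k}(\o)$; if you adopt your trivialization you would need to translate back to recover that equation.
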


\begin{proof} By the induction hypothesis, {for each $p\in\S^{k-1}(\o)$, there exist an open neighborhood $\mathcal{U}\subset M$ of $p$ and functions $F_1,\ldots,F_{m-r}:\mathcal{U}\rightarrow\R$ such that $$\mathcal{U}\cap\S^{k-1}(\o)=\{x\in\mathcal{U} \ | \ F_1(x)=\ldots=F_{m-r}(x)=0\}$$ with $\rank(dF_1(x),\ldots,dF_{m-r}(x))=m-r$, for each $x\in\S^{k-1}(\o)\cap\mathcal{U}.$}


If $(p,\tilde{\f})\in N_{\S^{k-1}}^{\ast}M^{r}$ then $\rank(\tilde{\f}_1,\ldots,\tilde{\f}_{r})=r$ and $$\rank (\tilde{\f}_1,\ldots,\tilde{\f}_{r},dF_1(p), \ldots, dF_{m-r}(p))=m-1$$ since $N_{p}^{\ast}\S^{k-1}(\o)=\langle dF_1(p), \ldots, dF_{m-r}(p)\rangle$. Thus, $$\det(dF_1(p), \ldots, dF_{m-r}(p), \tilde{\f}_1,\ldots,\tilde{\f}_{r})=0$$ and fixing the notation $\tilde{\f}_i=(\tilde{\f}_i^1,\ldots,\tilde{\f}_i^m)$ for $i=1,\ldots,r,$ we can suppose without loss of generality that
\begin{equation*}{\renewcommand{\arraystretch}{2}\left|\begin{array}{ccccccc}
\ffrac{\partial F_1}{\partial x_1}(p)& \cdots &\ffrac{\partial F_{m-r}}{\partial x_1}(p) & \tilde{\f}_1^1&\cdots &\tilde{\f}_{r-1}^1\\
\vdots&\ddots&\vdots & \vdots  &\ddots &\vdots\\
\ffrac{\partial F_1}{\partial x_{m-1}}(p)& \cdots &\ffrac{\partial F_{m-r}}{\partial x_{m-1}}(p) &\tilde{\f}_1^{m-1}&\cdots &\tilde{\f}_{r-1}^{m-1}
\end{array}\right|}\neq0
\end{equation*} and consequently, that
\begin{equation}\label{matrizFU}{\renewcommand{\arraystretch}{2}\left|\begin{array}{ccccccc}
\ffrac{\partial F_1}{\partial x_1}(x)& \cdots &\ffrac{\partial F_{m-r}}{\partial x_1}(x) & \f_1^1&\cdots &\f_{r-1}^1\\
\vdots&\ddots&\vdots & \vdots  &\ddots &\vdots\\
\ffrac{\partial F_1}{\partial x_{m-1}}(x)& \cdots &\ffrac{\partial F_{m-r}}{\partial x_{m-1}}(x) &\f_1^{m-1}&\cdots &\f_{r-1}^{m-1}
\end{array}\right|}\neq0
\end{equation} for all $(x,\f)\in(\S^{k-1}(\o)\cap\mathcal{U})\times \mathcal{V}$, where $\mathcal{V}\subset\R^{mr}$ is an open subset. Thus,  $N_{\S^{k-1}}^{\ast}M^{r}$ can be locally defined by
\begin{equation}\label{eqlocalNSM}
N_{\S^{k-1}}^{\ast}M^{r}=\{(x,\f)\in\mathcal{U}\times \mathcal{V} \ | \ F_1=\ldots=F_{m-r}=\d=0\},
\end{equation} where $\d(x,\f)=\det(dF_1(x),\ldots,dF_{m-r}(x),\f_1,\ldots,\f_{r})$.

Let $B(x,\f)$ be the square matrix of order $m$ whose columns are given by the coefficients of the 1-forms $dF_1(x)$, $\ldots$, $dF_{m-r}(x)$, $\f_1$, $\ldots$, $\f_{r}$:
\begin{equation*}B(x,\f)=\left(\begin{array}{cccccc}
dF_1(x) & \cdots & dF_{m-r}(x) & \f_1 & \cdots & \f_{r}
\end{array}\right).
\end{equation*} We have, $$\d(x,\f)=\displaystyle\sum_{i=1}^{m}{\f_{r}^{i}\cof(\f_{r}^i,B)},$$ where $\cof(\f_{r}^i,B)$ denotes the cofactor of $\f_{r}^i$ in the matrix $B(x,\f)$ so that
\begin{equation*}
\ffrac{\partial\d}{\partial \f_{r}^{m}}(x,\f)=\displaystyle\sum_{i=1}^{m}{\cof(\f_{r}^i,B)\ffrac{\partial \f_{r}^{i}}{\partial \f_{r}^{m}} + \f_{r}^{i}\ffrac{\partial \cof(\f_{r}^{i},B)}{\partial \f_{r}^{m}}}
\end{equation*} and since $\cof(\f_{r}^{i},B)$ does not depend on the variable $\f_{r}^{m}$, $$\frac{\partial \cof(\f_{r}^{i},B)}{\partial \f_{r}^{m}}=0, \text{ for } i=1,\ldots,m.$$ Then, $$\ffrac{\partial\d}{\partial \f_{r}^{m}}(x,\f)=\cof(\f_{r}^{m},B)\overset{(\ref{matrizFU})}{\neq}0,$$ and the derivative of $\d(x,\f)$ with respect to $\f$ does not vanish, that is, $d_\f\d(x,\f)\neq0$ and the matrix
\begin{equation*}\left[\renewcommand{\arraystretch}{1.7}{\begin{array}{c}
d F_1(x)\\
\vdots\\
d F_{m-r}(x)\\
d \d(x,\f)\\
\end{array}}\right]=\left[\setlength{\arraycolsep}{0.1cm}{
\begin{array}{ccc}
      d_x F_1(x)         &  \vdots &                                     \\
      \vdots                  &  \vdots &   O_{(m-r)\times(r)}        \\
      d_x F_{m-r}(x) &  \vdots &                                     \\
\cdots \ \cdots \ \cdots \ \cdots \ \cdots &  \vdots & \cdots \ \cdots \ \cdots \ \cdots \ \cdots \ \cdots \\
       d_x\d(x,\f)        &  \vdots &   d_\f\d(x,\f)                    \\
\end{array}}\right]
\end{equation*} has rank $ m-r+1$, where $O_{(m-r)\times(r)}$ denotes a null matrix. Hence, $$\rank(d F_1(x),\ldots,d F_{m-r}(x),d\d(x,\f))=m-r+1,$$ for each $(x,\f)\in N_{\S^{k-1}}^{\ast}M^{r}\cap(\mathcal{U}\times \mathcal{V})$ and, therefore, $N_{\S^{k-1}}^{\ast}M^{r}$ is a smooth submanifold of $T_{\S^{k-1}}^{\ast}M^{r}$ of dimension $m+mr-(m-r+1)=mr+r-1.$
\end{proof}

{By the induction hypothesis, we have that for each $p\in\S^{k-1}(\o)$, $$\dim(\langle\o_1(p),\ldots,\o_{n}(p)\rangle\cap N_p^{\ast}\S^{k-2}(\o))=k-2$$ and there exist an open neighborhood $\mathcal{U}\subset M$ of $p$ and functions $F_1,\ldots,F_{m-r}:\mathcal{U}\rightarrow\R$ such that $\mathcal{U}\cap\S^{k-1}(\o)=\{x\in\mathcal{U} \ | \ F_1(x)=\ldots=F_{m-r}(x)=0\}$ with $\rank(dF_1(x),\ldots,dF_{m-r}(x))=m-r$, for each $x\in\S^{k-1}(\o)\cap\mathcal{U}.$ Then, we can choose $\{\z_1(x),\ldots,\z_{r}(x)\}$ a smooth $r$-coframe defined on $\mathcal{U}$ which restriction to $\mathcal{U}\cap \S^{k-1}(\o)$ is a smooth basis of a vector subspace supplementary to \begin{equation}\label{base} \langle\o_1(x),\ldots,\o_{n}(x)\rangle\cap N_x^{\ast}\S^{k-2}(\o)\end{equation} in $\langle\o_1(x),\ldots,\o_{n}(x)\rangle$.} Let $\z^{k-1}: \S^{k-1}(\o)\cap\mathcal{U}\rightarrow T_{\S^{k-1}}^{\ast}M^{r}$ be the map given by $\z^{k-1}(x)=(x,\z_1(x),\ldots,\z_{r}(x))$, we define:


\begin{defi}\label{def2point3} We say that the $n$-coframe $\o=(\o_1,\ldots,\o_{n})$ satisfies the ``intersection properties $I_k$'', if {for each $p\in\S^{k-1}(\o)$ there exist an open neighborhood $\mathcal{U}\subset M$ of $p$ and a map $\z^{k-1}: \S^{k-1}(\o)\cap\mathcal{U}\rightarrow T_{\S^{k-1}}^{\ast}M^{r}$ as defined above, such that on $\mathcal{U}$ the following properties hold:} 
\begin{enumerate}[(a)] 
\item $\z^{k-1}\pitchfork N_{\S^{k-1}}^{\ast}M^{r}$ in $T_{\S^{k-1}}^{\ast}M^{r}$;
\item $(\z^{k-1})^{-1}(N_{\S^{k-1}}^{\ast}M^{r,\geq2})=\emptyset$;
\end{enumerate} where \small{$N_{\S^{k-1}}^{\ast}M^{r,\geq2}=\{(x,\f)\in T_{\S^{k-1}}^{\ast}M^{r} \ | \ \rank(\f_1,\ldots,\f_{r})=r,
\dim(\langle\f_1,\ldots,\f_{r}\rangle\cap N_x^{\ast}\S^{k-1}(\o))\geq2\}$}.
\end{defi}

\normalsize 

Note that, if the $n$-coframe $\o$ satisfies the properties $I_{k}$ $(a)$ and $(b)$ then, for each $x\in\S^{k-1}(\o)\cap\mathcal{U}$, $\dim(\langle\z_1(x),\ldots,\z_{r}(x)\rangle\cap N_x^{\ast}\S^{k-1}(\o))$ is either equal to $0$ or equal to $1$. 

\begin{defi}\label{defiSk} Let $\o$ be an $n$-coframe with $\corank 1$ that satisfies the intersection properties $I_{k}$ $(a)$ and $(b)$. We say that a point $x\in\S^{k-1}(\o)$ belongs to $A_{k-1}(\o)$ if $$\dim(\langle\z_1(x),\ldots,\z_{r}(x)\rangle\cap N_x^{\ast}\S^{k-1}(\o))=0;$$ and we say that $x$ belongs to $\S^{k}(\o)$ if $x\in\S^{k-1}(\o)\setminus A_{k-1}(\o)$, that is, if $$\dim(\langle\z_1(x),\ldots,\z_{r}(x)\rangle\cap N_x^{\ast}\S^{k-1}(\o))=1.$$ Therefore, \begin{equation*}\setlength{\arraycolsep}{0.05cm}{\begin{array}{cl}
A_{k-1}(\o)&=\{x\in\S^{k-1}(\o)|\dim(\langle\z_1(x),\ldots,\z_{r}(x)\rangle\cap N_x^{\ast}\S^{k-1}(\o))=0\};\\
 \S^{k}(\o)&=\{x\in\S^{k-1}(\o)|\dim(\langle\z_1(x),\ldots,\z_{r}(x)\rangle\cap N_x^{\ast}\S^{k-1}(\o))=1\}.
\end{array}}\end{equation*}
\end{defi}

\begin{defi}\label{Morinsing} {Let $\o$ be an $n$-coframe with $\corank 1$ that satisfies the intersection properties $I_{k}$ $(a)$ and $(b)$.} We say that a point $x\in M$ is a Morin singular point of type $A_{k}$ of the $n$-coframe $\o$ if $x\in A_{k}(\o)$.
\end{defi}

\begin{lema}\label{skdimension} By Definition \ref{def2point3}, $\S^{k}(\o)$ is either the empty set or a smooth submanifold of $M$ of dimension $n-k$ .
\end{lema}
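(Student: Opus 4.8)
The plan is to realize $\S^{k}(\o)$ locally as a transversal preimage and then apply the transversal preimage theorem. First I would note that, since $\{\z_1(x),\ldots,\z_{r}(x)\}$ is a basis of a subspace of $T_x^{\ast}M$ for each $x\in\S^{k-1}(\o)\cap\mathcal{U}$, one has $\rank(\z_1(x),\ldots,\z_{r}(x))=r$ automatically; consequently $\z^{k-1}(x)\in N_{\S^{k-1}}^{\ast}M^{r}$ if and only if $\dim(\langle\z_1(x),\ldots,\z_{r}(x)\rangle\cap N_x^{\ast}\S^{k-1}(\o))=1$, which by Definition~\ref{defiSk} is exactly the condition $x\in\S^{k}(\o)$. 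Hence $$\S^{k}(\o)\cap\mathcal{U}=(\z^{k-1})^{-1}(N_{\S^{k-1}}^{\ast}M^{r}).$$ Property $I_{k}$ $(b)$ is what guarantees this is an honest equality: it rules out preimages landing in $N_{\S^{k-1}}^{\ast}M^{r,\geq2}$, so no point with intersection dimension $\geq2$ is accidentally swept into the preimage.

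Next I would feed this into the transversality hypothesis. By property $I_{k}$ $(a)$ the map $\z^{k-1}$ is transverse to $N_{\S^{k-1}}^{\ast}M^{r}$ in $T_{\S^{k-1}}^{\ast}M^{r}$, and by Lemma~\ref{dimNSM} the set $N_{\S^{k-1}}^{\ast}M^{r}$ is a submanifold of codimension $1$. The transversal preimage theorem then gives that $(\z^{k-1})^{-1}(N_{\S^{k-1}}^{\ast}M^{r})$ is empty or a submanifold of the domain $\S^{k-1}(\o)\cap\mathcal{U}$ of codimension $1$. Since the induction hypothesis gives $\dim(\S^{k-1}(\o))=r=n-k+1$, this yields $$\dim(\S^{k}(\o)\cap\mathcal{U})=(n-k+1)-1=n-k.$$

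It remains to see that these local pieces assemble into a genuine submanifold of $M$. The chart $\mathcal{U}$, the functions $F_i$ and the supplementary coframe $\{\z_1,\ldots,\z_{r}\}$ are all auxiliary choices, but the set $\S^{k}(\o)$ is intrinsic: using $N_x^{\ast}\S^{k-2}(\o)\subseteq N_x^{\ast}\S^{k-1}(\o)$ together with the induction hypothesis $\dim(\langle\o_1(x),\ldots,\o_n(x)\rangle\cap N_x^{\ast}\S^{k-2}(\o))=k-2$, a direct-sum computation shows that $\dim(\langle\z_1(x),\ldots,\z_{r}(x)\rangle\cap N_x^{\ast}\S^{k-1}(\o))$ equals $\dim(\langle\o_1(x),\ldots,\o_n(x)\rangle\cap N_x^{\ast}\S^{k-1}(\o))-(k-2)$, an expression that does not depend on the chosen supplement $\langle\z_1(x),\ldots,\z_{r}(x)\rangle$. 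Thus the locally defined sets $\S^{k}(\o)\cap\mathcal{U}$ agree on overlaps and glue to a submanifold of $M$ of dimension $n-k$ (or $\S^{k}(\o)$ is empty). I expect the main obstacle to be precisely this bookkeeping in the first and last steps --- confirming that the transversal preimage captures points of intersection dimension exactly $1$ (the content of property $I_{k}$ $(b)$) and that the resulting set is choice-independent --- whereas the dimension count itself is a routine application of the preimage theorem once Lemma~\ref{dimNSM} supplies the codimension.
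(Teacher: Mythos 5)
Your proof is correct and follows essentially the same route as the paper: identifying $\S^{k}(\o)$ locally with $(\z^{k-1})^{-1}(N_{\S^{k-1}}^{\ast}M^{r})$, invoking the transversality from property $I_k$ $(a)$ together with Lemma \ref{dimNSM}, and applying the preimage theorem to get codimension $1$ in $\S^{k-1}(\o)$. The extra bookkeeping you supply on choice-independence is handled by the paper separately (in Lemma \ref{indepdabase}) rather than inside this proof, but it does not change the argument.
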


\begin{proof} Note that, locally, $\S^{k}(\o)=(\z^{k-1})^{-1}(N^{\ast}_{\S^{k-1}}M^{r})$ and $\z^{k-1}\pitchfork N_{\S^{k-1}}^{\ast}M^{r}$, thus, if $\Sigma^k(\o)\neq\emptyset$ then $\Sigma^k(\o)$ is a smooth submanifold of $\S^{k-1}(\o)$ of codimension 1, that is, $\dim(\Sigma^k(\o))=n-k$.
\end{proof}

\begin{lema}\label{dimensaointersecao} If $p\in\S^{k}(\o)$ then $\dim(\langle\o_1(p),\ldots,\o_{n}(p)\rangle\cap N_p^{\ast}\S^{k-1}(\o))=k-1.$ 
\end{lema}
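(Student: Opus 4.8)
The plan is to read off the answer from the direct-sum decomposition of $V:=\langle\o_1(p),\ldots,\o_n(p)\rangle$ that is built into the construction of the auxiliary coframe $\z_1,\ldots,\z_r$. Recall from \eqref{base} that $Z:=\langle\z_1(p),\ldots,\z_r(p)\rangle$ was chosen to be supplementary, inside $V$, to $W:=V\cap N_p^{\ast}\S^{k-2}(\o)$, so that $V=W\oplus Z$. Since $p\in\S^k(\o)\subset\S^{k-1}(\o)$, the induction hypothesis gives $\dim W=k-2$, and Definition \ref{defiSk} gives $\dim\!\big(Z\cap N_p^{\ast}\S^{k-1}(\o)\big)=1$ precisely because $p\in\S^k(\o)$. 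The whole proof then amounts to showing that these two numbers add, i.e. that $\dim\!\big(V\cap N_p^{\ast}\S^{k-1}(\o)\big)=(k-2)+1=k-1$.

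First I would record the nesting $N_p^{\ast}\S^{k-2}(\o)\subset N_p^{\ast}\S^{k-1}(\o)$. This follows because $\S^{k-1}(\o)\subset\S^{k-2}(\o)$ forces $T_p\S^{k-1}(\o)\subset T_p\S^{k-2}(\o)$, and any covector that annihilates the larger tangent space necessarily annihilates the smaller one. In particular $W\subset N_p^{\ast}\S^{k-1}(\o)$.

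The main step is the identity
$$V\cap N_p^{\ast}\S^{k-1}(\o)=W\oplus\big(Z\cap N_p^{\ast}\S^{k-1}(\o)\big).$$
The inclusion $\supseteq$ is immediate from $W\subset N_p^{\ast}\S^{k-1}(\o)$ together with $W\oplus Z=V$. For $\subseteq$, I would take $v\in V\cap N_p^{\ast}\S^{k-1}(\o)$, write it uniquely as $v=w+z$ with $w\in W$ and $z\in Z$, and observe that $z=v-w$ lies in $N_p^{\ast}\S^{k-1}(\o)$ (both $v$ and $w$ do), hence in $Z\cap N_p^{\ast}\S^{k-1}(\o)$; the sum is direct because $W\cap Z=\{0\}$. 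Taking dimensions and substituting $\dim W=k-2$ and $\dim\!\big(Z\cap N_p^{\ast}\S^{k-1}(\o)\big)=1$ yields $k-1$, as claimed.

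I expect the only genuinely delicate point to be the invocation of $\dim W=k-2$. For $2\le k-1$ this is exactly the induction hypothesis applied with $i=k-1$; but when $k=2$ the index falls outside the range $i=2,\ldots,k-1$ in which that hypothesis is stated, and one must instead use the convention $N_p^{\ast}\S^0(\o)=\{0\}$ (so that $W=\{0\}$ and $\dim W=0=k-2$). Apart from this bookkeeping the argument is pure linear algebra in $T_p^{\ast}M$ and involves no transversality or differentiability beyond what is already encoded in the definitions of $\S^{k-1}(\o)$ and $\S^k(\o)$.
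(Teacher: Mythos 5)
Your proof is correct and follows essentially the same route as the paper's: the decomposition $\langle\bar{\o}(p)\rangle=(\langle\bar{\o}(p)\rangle\cap N_p^{\ast}\S^{k-2}(\o))\oplus\langle\bar{\z}^{k-1}(p)\rangle$ combined with the inclusion $N_p^{\ast}\S^{k-2}(\o)\subset N_p^{\ast}\S^{k-1}(\o)$ to split the intersection and add dimensions $(k-2)+1$. Your explicit treatment of the direct-sum identity and of the $k=2$ base case via the convention $N_p^{\ast}\S^0(\o)=\{0\}$ is slightly more careful than the paper's, but the argument is the same.
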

\begin{proof} For clearer notations, let us write $\langle\bar{\o}(x)\rangle=\langle\o_1(x),\ldots,\o_n(x)\rangle$ and $\langle\bar{\z}^{k-1}(x)\rangle=\langle\z_1(x),\ldots,\z_{r}(x)\rangle$.
Let $p\in\S^{k}(\o)$, since $\S^{k}(\o)\subset\S^{k-1}(\o)\subset\S^{k-2}(\o)$, {there exist an open neighborhood $\mathcal{U}\subset M$ of $p$ and functions $F_{1}, \ldots, F_{m-n+k}:\mathcal{U}\rightarrow\R$ such that} the submanifolds $\S^{i}(\o)$, $i=k-2,k-1,k,$ can be locally defined by $$\mathcal{U}\cap\S^{i}(\o)=\{x\in\mathcal{U} \, | \, F_1(x)= \ldots= F_{m-n+i}(x)=0\},$$ where the derivatives $\{dF_1(x), \ldots, dF_{m-n+i}(x)\}$ are 1-forms linearly independent for each $x\in \S^{i}(\o)\cap\mathcal{U}$ and $N_{x}^{\ast}\S^{i}(\o)=\langle dF_1(x), \ldots, dF_{m-n+i}(x)\rangle$.

By the way the $r$-coframe $\{\z_1(x),\ldots,\z_{r}(x)\}$ has been chosen, for each $x\in\S^{k-1}(\o)\cap\mathcal{U}$ we have $$\langle\bar{\o}(x)\rangle=(\langle\bar{\o}(x)\rangle\cap N_x^{\ast}\S^{k-2}(\o))\oplus\langle\bar{\z}^{k-1}(x)\rangle,$$ and since $N_x^{\ast}\S^{k-2}(\o)\subset N_x^{\ast}\S^{k-1}(\o)$, $\langle\bar{\o}(x)\rangle\cap N_x^{\ast}\S^{k-1}(\o)$ is equal to $$(\langle\bar{\o}(x)\rangle\cap N_x^{\ast}\S^{k-2}(\o))\oplus(\langle\bar{\z}^{k-1}(x)\rangle\cap N_x^{\ast}\S^{k-1}(\o)).$$ Since $p\in\S^{k}(\o)\subset\S^{k-1}(\o)$, we know that $\dim(\langle\bar{\o}(p)\rangle\cap N_p^{\ast}\S^{k-2}(\o))=k-2$ and by the definition of $\S^{k}(\o)$, $\dim(\langle\bar{\z}^{k-1}(p)\rangle\cap N_p^{\ast}\S^{k-1}(\o))=1$. Therefore, $\dim(\langle\bar{\o}(p)\rangle\cap N_p^{\ast}\S^{k-1}(\o))=(k-2)+1=k-1$. \end{proof}

Next, we will show that Definitions \ref{def2point3} and \ref{defiSk} do not depend on the choice of the basis $\{\z_1(x), \ldots, \z_{r}(x)\}$. To do this, first we must find equations that define the manifold $\S^{k}(\o)$ locally.


{
\begin{lema}\label{eqlocalSk} Let $p\in\S^{k-1}(\o)$. There are an open neighborhood $\mathcal{U}\subset M$ of $p$ and functions $F_i:\mathcal{U}\rightarrow\R$, $i=1, \ldots, m-r$, such that $$\mathcal{U}\cap\S^{k-1}(\o)=\{x\in\mathcal{U} \, | \, F_1(x)=\ldots=F_{m-r}(x)=0\},$$ and a smooth $r$-coframe defined on $\mathcal{U}$ $\{\z_1(x), \ldots, \z_{r}(x)\}$ which is a basis of a vector subspace supplementary to $\langle\bar{\o}(x)\rangle\cap N_x^{\ast}\S^{k-2}(\o)$ in $\langle\bar{\o}(x)\rangle$ for each $x\in\mathcal{U}\cap\S^{k-1}(\o)$. Let $$\d_{k}(x)=\det(dF_1,\ldots,dF_{m-r},\z_1, \ldots, \z_{r})(x).$$ Then $\o$ satisfies the intersection properties $I_k$ on $\mathcal{U}$ if and only if the following properties hold:
\begin{enumerate}[(i)] 
\item $\dim\langle\z_1(x), \ldots, \z_{r}(x)\rangle\cap N_x^{\ast}\S^{k-1}(\o)=0$ or $1$ for $x\in\mathcal{U}\cap\S^{k-1}(\o)$;\\
\item if $\dim\langle\z_1(x), \ldots, \z_{r}(x)\rangle\cap N_x^{\ast}\S^{k-1}(\o)=1$ (or equivalently $\d_k(x)=0$), then $\rank(dF_1(x),\ldots, dF_{m-r}(x), d\d_{k}(x))=m-r+1$.
\end{enumerate}
In this case, $\mathcal{U}\cap\S^{k}(\o)=\{x\in\mathcal{U} \, | \, F_1(x)=\ldots=F_{m-r}(x)=\d_k(x)=0\}.$
\end{lema}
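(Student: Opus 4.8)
The plan is to match the two clauses of the intersection properties $I_k$ with the conditions $(i)$ and $(ii)$, reducing everything to the single scalar function $\d_k$ and its behavior on $\S^{k-1}(\o)$. First I would dispatch the linear-algebra content. Since the $r$-coframe $\{\z_1(x),\ldots,\z_r(x)\}$ is by construction a basis of an $r$-dimensional subspace of $T_x^{\ast}M$, one has $\rank(\z_1(x),\ldots,\z_r(x))=r$ on all of $\mathcal{U}$, so the rank condition in the definitions of $N_{\S^{k-1}}^{\ast}M^r$ and $N_{\S^{k-1}}^{\ast}M^{r,\geq2}$ is automatic for the image of $\z^{k-1}$. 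As $dF_1(x),\ldots,dF_{m-r}(x)$ and $\z_1(x),\ldots,\z_r(x)$ are each independent families of covectors, the $m\times m$ determinant $\d_k(x)$ of these $m$ covectors vanishes exactly when the two spans meet nontrivially, i.e.
$$\d_k(x)=0\iff\dim\big(\langle\z_1(x),\ldots,\z_r(x)\rangle\cap N_x^{\ast}\S^{k-1}(\o)\big)\geq1$$
for $x\in\mathcal{U}\cap\S^{k-1}(\o)$, where $N_x^{\ast}\S^{k-1}(\o)=\langle dF_1(x),\ldots,dF_{m-r}(x)\rangle$. Unwinding the definition of $N_{\S^{k-1}}^{\ast}M^{r,\geq2}$, property $I_k(b)$ asserts precisely that this intersection never has dimension $\geq2$ on $\mathcal{U}\cap\S^{k-1}(\o)$, which is condition $(i)$; thus $I_k(b)\iff(i)$, and under $(i)$ the displayed equivalence sharpens to $\d_k(x)=0\iff\dim=1$.

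Next I would treat the transversality clause $I_k(a)$. By Lemma \ref{dimNSM} the hypersurface $N_{\S^{k-1}}^{\ast}M^r$ is cut out inside $T_{\S^{k-1}}^{\ast}M^r$ by the single equation $\d=0$, with $d\d$ non-vanishing along $N_{\S^{k-1}}^{\ast}M^r$ in directions tangent to $T_{\S^{k-1}}^{\ast}M^r$ (the proof of that lemma exhibited the fiber derivative $\partial\d/\partial\f_r^m\neq0$). The standard fact that transversality of a map to a hypersurface $\{\d=0\}$ at an image point lying on it is equivalent to the pulled-back function having nonzero differential there applies to $\z^{k-1}$; since $\d\circ\z^{k-1}$ is exactly the restriction $\d_k|_{\S^{k-1}(\o)\cap\mathcal{U}}$, I obtain that $I_k(a)$ holds if and only if $d(\d_k|_{\S^{k-1}(\o)})_x\neq0$ at every $x\in\mathcal{U}\cap\S^{k-1}(\o)$ with $\z^{k-1}(x)\in N_{\S^{k-1}}^{\ast}M^r$. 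Granting $(i)$ (equivalently $I_k(b)$), by the first paragraph these are exactly the points with $\d_k(x)=0$.

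The key step, requiring the most care, is to convert the intrinsic nonvanishing $d(\d_k|_{\S^{k-1}(\o)})_x\neq0$ into the extrinsic rank statement of $(ii)$. For this I would use that $N_x^{\ast}\S^{k-1}(\o)=\langle dF_1(x),\ldots,dF_{m-r}(x)\rangle$ is exactly the annihilator of $T_x\S^{k-1}(\o)$ in $T_x^{\ast}M$, and that the restriction of the covector $d\d_k(x)$ to $T_x\S^{k-1}(\o)$ coincides with $d(\d_k|_{\S^{k-1}(\o)})_x$. Hence $d(\d_k|_{\S^{k-1}(\o)})_x\neq0$ is equivalent to $d\d_k(x)\notin\langle dF_1(x),\ldots,dF_{m-r}(x)\rangle$, and, since the $dF_i(x)$ are already independent, to $\rank(dF_1(x),\ldots,dF_{m-r}(x),d\d_k(x))=m-r+1$. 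This is precisely $(ii)$, so combining the three paragraphs yields $I_k\iff(i)\wedge(ii)$. Finally, the local equation for $\S^k(\o)$ follows from Definition \ref{defiSk}: it characterises $\mathcal{U}\cap\S^k(\o)$ as the set of $x\in\mathcal{U}\cap\S^{k-1}(\o)$ with intersection dimension $1$; since $\mathcal{U}\cap\S^{k-1}(\o)=\{F_1=\cdots=F_{m-r}=0\}$ and, under $(i)$, dimension $1$ is equivalent to $\d_k(x)=0$, I conclude $\mathcal{U}\cap\S^k(\o)=\{x\in\mathcal{U}\mid F_1(x)=\cdots=F_{m-r}(x)=\d_k(x)=0\}$.
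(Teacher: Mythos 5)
Your proposal is correct, and it reaches the same pivot as the paper --- namely that $I_k(a)$ amounts to the non-vanishing of $d\d_k(x)$ modulo $N_x^{\ast}\S^{k-1}(\o)$ at the points where $\d_k$ vanishes --- but it gets there by a genuinely shorter route. The paper verifies this pivot by an explicit computation: it introduces the graph $G(\z^{k-1})$, writes out the full Jacobian matrix of the combined defining equations of $G(\z^{k-1})$ and $N_{\S^{k-1}}^{\ast}M^{r}$, and performs row operations with the cofactor coefficients $\beta_i^j(x,\f)$ to isolate $\pi(d\d_k(\bar x))$ in the last row. You instead observe that, inside $T_{\S^{k-1}}^{\ast}M^{r}$, the set $N_{\S^{k-1}}^{\ast}M^{r}$ is locally the zero set of the single function $\d$ with $d_\f\d\neq0$ along it (which is exactly what the proof of Lemma \ref{dimNSM} supplies), and then invoke the standard criterion that a map is transverse to such a hypersurface at an image point precisely when the pullback of the defining function has non-zero differential; since $\d\circ\z^{k-1}=\d_k|_{\S^{k-1}(\o)\cap\mathcal{U}}$, the chain rule replaces the paper's entire matrix manipulation. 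The remaining ingredients --- the identification of $I_k(b)$ with $(i)$ via the automatic rank condition, the equivalence $\d_k(x)=0\Leftrightarrow\dim\geq1$ from the independence of the two families of covectors, the passage from the intrinsic condition $d(\d_k|_{\S^{k-1}(\o)})_x\neq0$ to the extrinsic rank condition $(ii)$ via the annihilator description of $N_x^{\ast}\S^{k-1}(\o)$, and the final description of $\mathcal{U}\cap\S^k(\o)$ --- coincide with the paper's. What your approach buys is brevity and transparency; what the paper's explicit computation buys is that the formula $R_{\d}'=\pi(d\d_k(\bar x))$ and the cofactor identities are exhibited concretely, and these reappear in the later lemmas of Sections \ref{s2} and \ref{s3}.
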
}

\begin{proof} 



{First, let us show that for each $\bar{x}\in\mathcal{U}\cap\S^k(\o)$, $$\rank\left(dF_1(\bar{x}),\ldots,dF_{m-r}(\bar{x}),d\d_k(\bar{x})\right)$$ is equal to $m-r+1$  if and only if $\z^{k-1}\pitchfork N_{\S^{k-1}}^{\ast}M^{r}$ in $T_{\S^{k-1}}^{\ast}M^{r}$ at $\bar{x}$.}

{By Lemma \ref{dimNSM}, $N_{\S^{k-1}}^{\ast}M^{r}$ can be locally defined by $$N_{\S^{k-1}}^{\ast}M^{r}=\{(x,\f)\in\mathcal{U}\times \mathcal{V}|F_1=\ldots=F_{m-r}=\d=0\},$$ where $\d(x,\f)=\det(dF_1(x),\ldots,dF_{m-r}(x),\f_1,\ldots,\f_{r})$ and $\mathcal{V}\subset\R^{mr}$. Let $$G(\z^{k-1})=\{(x,\z_1(x),\ldots,\z_{r}(x))\ | \ x\in\mathcal{U}\cap\S^{k-1}(\o)\}$$ be the restriction of the graph of $(\z_1(x),\ldots,\z_{r}(x))$ to $\mathcal{U}\cap\S^{k-1}(\o)$, $G(\z^{k-1})$ can be locally defined by 
\begin{equation*}\label{eqlocalGz}\begin{array}{c}
G(\z^{k-1})=\{(x,\f)\in T^{\ast}M^{r} \ | \ F_1(x)=\ldots=F_{m-r}(x)=0;\\
\ \ \ \ \ \ \ \ \ \ \ \ \ \ \ \ \ \ \ \ \z_i^j(x)-\f_i^j=0, i=1,\ldots,r \text{ and } j=1,\ldots,m\},
\end{array}
\end{equation*} where $T^{\ast}M^{r}$ denotes the $r$-cotangent bundle of $M$, $\z_i(x)=(\z_i^1(x),\ldots,\z_i^m(x))$ and $\f_i=(\f_i^1,\ldots,\f_i^m)$ for $i=1,\ldots, r$. In particular, the local equations of $G(\z^{k-1})$ are clearly independent and $\dim G(\z^{k-1})=r$.
Let $(x,\f)$ be local coordinates in $T^{\ast}M^{r}$, with $x=(x_1,\ldots,x_m)$ and $$\f=(\f_1^1,\ldots,\f_1^m,\f_2^1,\ldots,\f_2^m,\ldots,\f_{r}^1,\ldots,\f_{r}^m),$$ let us consider the derivatives of the local equations of $N_{\S^{k-1}}^{\ast}M^{r}$ and $G(\z^{k-1})$ with respect to $(x,\f)$.
We will denote the derivative with respect to $x$ by $d_x$ and the derivative with respect to $\f$ by $d_{\f}$, then we have 
\begin{equation}\label{gradeqgraf}d\left( \z_i^j(x)-\f_i^j\right)=\left(  d_x\z_i^j(x) \ , -d_{\f}\f_i^j \right),\end{equation} for $i=1,\ldots,r$ and $j=1,\ldots,m$, where $d_{\f} {\f}_i^j=(0,\ldots,0,1,0,\ldots,0)$ is the vector whose $m(i-1)+j^{th}$ entry is equal to $1$ and the others are zero. By Lagrange's rules the determinant $\d(x,{\f})=\det(d F_1(x),\ldots,d F_{m-r}(x),{\f}_1,\ldots,{\f}_{r})$ can be written as $$\d(x,{\f})=\sum_I{F_I(x)N_I({\f})}$$ for $I=\{i_1,\ldots,i_{r}\}\subset\{1,\ldots,m\}$, where
\begin{equation}\label{NIU}N_I({\f})=\left|\begin{array}{ccc}
{\f}_1^{i_1} & \ldots & {\f}_{r}^{i_1}\\ 
\vdots & \ddots & \vdots\\
{\f}_1^{i_{r}} & \ldots & {\f}_{r}^{i_{r}}\\ 
\end{array}\right|
\end{equation} is the minor obtained from the matrix 
\begin{equation*}\left[\begin{array}{ccc}
{\f}_1^{1} & \ldots & {\f}_{r}^{1}\\ 
\vdots & \ddots & \vdots\\
{\f}_1^{m} & \ldots & {\f}_{r}^{m}\\ 
\end{array}\right]
\end{equation*} taking the lines $i_1,\ldots,i_{r}$, and 
\begin{equation}\label{FI}F_I(x)=\pm\left|\begin{array}{ccc}
\ffrac{\partial F_1}{\partial x_{k_1}}(x) & \ldots & \ffrac{\partial F_{m-r}}{\partial x_{k_1}}(x)\\ 
\vdots & \ddots & \vdots\\
\ffrac{\partial F_{1}}{\partial x_{k_{m-r}}}(x) & \ldots & \ffrac{\partial F_{m-r}}{\partial x_{k_{m-r}}}(x)\\ 
\end{array}\right|
\end{equation} is, up to sign, the minor obtained from the matrix $(d F_1(x) \ldots d F_{m-r}(x))$ removing the lines $i_1,\ldots,i_{r}$, that is, $\{k_1,\ldots,k_{m-r}\}=\{1,\ldots,m\}\setminus I$. Therefore,
$$d\d(x,\f)=( \ \displaystyle\sum_I{N_I(\f)d_xF_I(x)} \ , \ \displaystyle\sum_I{F_I(x)d_{\f}N_I(\f)} \ ).$$}

{Note that $\z^{k-1}\pitchfork N_{\S^{k-1}}^{\ast}M^{r}$ in $T_{\S^{k-1}}^{\ast}M^{r}$ at the point $x\in\mathcal{U}\cap\S^{k-1}(\o)$ if and only if $G(\z^{k-1})\pitchfork N_{\S^{k-1}}^{\ast}M^{r}$ in $T_{\S^{k-1}}^{\ast}M^{r}$ at $(x,\z^{k-1}(x))$. Let $\pi_1$ be the projection of the contangent space of $T^{\ast}M^{r}$ over the contangent space of $T_{\S^{k-1}}^{\ast}M^{r}$:
\begin{equation*}\begin{array}{cccc}
\pi_1: & T_{(x,\f)}^{\ast}(T^{\ast}M^{r}) & \longrightarrow & T_{(x,\f)}^{\ast}(T_{\S^{k-1}}^{\ast}M^{r})\\
  & (\psi(x),\f_1,\ldots,\f_{r}) & \longmapsto & (\pi(\psi(x)),\f_1,\ldots,\f_{r})
\end{array}
\end{equation*} where $\pi$ denotes the restriction to $T_x\S^{k-1}(\o)$, that is, $\pi(\psi(x))=\psi(x)_{|_{T_x\S^{k-1}(\o)}}$. By Equation (\ref{gradeqgraf}), $$\pi_1\left(d( \z_i^j(x)-\f_i^j)\right)=\left(  \pi(d_x\z_i^j(x)) \ , -d_{\f}\f_i^j \right),$$ for $i=1,\ldots,r$ and $j=1,\ldots,m$. We also have that $$\pi_1\left(d\d(x,\f)\right)=\left( \ \pi\left(\displaystyle\sum_I{N_I(\f)d_xF_I(x)}\right) \ , \ \displaystyle\sum_I{F_I(x)d_{\f}N_I(\f)} \ \right).$$ Then, $G(\z^{k-1})\pitchfork N_{\S^{k-1}}^{\ast}M^{r}$ in $T_{\S^{k-1}}^{\ast}M^{r}$ at $(x,\z^{k-1}(x))$ if and only if the matrix
\begin{equation}\label{LijLdelta}\left[\setlength{\arraycolsep}{0.12cm}{\begin{array}{ccc}
  \pi(d_x\z_1^1(x))       &  \vdots &                                     \\
  \vdots                      &  \vdots &           \\
  \pi(d_x\z_1^m(x))       &  \vdots &      -Id_{mr}                               \\
  \vdots                      &  \vdots &   \\
  \pi(d_x\z_{r}^m(x)) &  \vdots &                  \\
\cdots \  \cdots \ \cdots \ \cdots \ \cdots \ \cdots \ \cdots & \vdots & \cdots \ \cdots \ \cdots \ \cdots \ \cdots \ \cdots \\
\pi\left(\displaystyle\sum_I{N_I(\f)d_xF_I(x)}\right)  & \vdots &   \displaystyle\sum_I{F_I(x)d_{\f}N_I(\f)}
\end{array}}\right]
\end{equation} has maximal rank at $x$. By the expression of $N_I(\f)$ in (\ref{NIU}), we have 
\begin{equation}\label{dN}
d_{\f} N_I(\f)=\sum_{i,j}{\cof(\f_i^j)d_{\f} \f_i^j},
\end{equation} for $i=1,\ldots, r$, $j\in I$ and $\cof(\f_i^j)$ denoting the cofactor of $\f_i^j$ in the matrix 
\begin{equation*}\left[\begin{array}{ccc}
{\f}_1^{i_1} & \ldots & {\f}_{r}^{i_1}\\ 
\vdots & \ddots & \vdots\\
{\f}_1^{i_{r}} & \ldots & {\f}_{r}^{i_{r}}\\ 
\end{array}\right].
\end{equation*} Let $d=C_{m,r}=\ffrac{m!}{r!(m-r)!}$, we will denote by $I_1, \ldots, I_d$ the subsets of $\{1, \ldots, m\}$ containing exactly $r$ elements. By equation (\ref{dN}), $$\displaystyle\sum_I{F_I(x)d_{\f}N_I({\f})}=\displaystyle\sum_{\l=1}^{d}F_{I_{\l}}(x)\left(\displaystyle\sum_{i=1}^{r}\displaystyle\sum_{j\in {I_{\l}}}\cof({\f}_i^j)d_{\f} {\f}_i^j\right)$$ and, 
$$\setlength{\arraycolsep}{0.1cm}{\renewcommand{\arraystretch}{2.7}{\begin{array}{l}
\displaystyle\sum_{\l=1}^{d}F_{I_{\l}}(x)\left(\displaystyle\sum_{i=1}^{r}\displaystyle\sum_{j\in {I_{\l}}}\cof({\f}_i^j)d_{\f} {\f}_i^j\right)\\
=\displaystyle\sum_{i=1}^{r}\left[F_{I_1}(x)\left(\displaystyle\sum_{j\in I_1}\cof({\f}_i^j)d_{\f} {\f}_i^j\right)+ \ldots +F_{I_d}(x)\left(\displaystyle\sum_{j\in I_d}\cof({\f}_i^j)d_{\f} {\f}_i^j\right)\right]\\
=\displaystyle\sum_{i=1}^{r}\left[\left(\displaystyle\sum_{I: \, 1\in I}F_I(x)\right)\cof({\f}_i^1)d_{\f}{{\f}_i^1}+\ldots+\left(\displaystyle\sum_{I: \, m\in{I}}F_I(x)\right)\cof({\f}_i^m)d_{\f} {\f}_i^m \right]\\
=\displaystyle\sum_{i=1}^{r}\left[\displaystyle\sum_{j=1}^{m}\left(\displaystyle\sum_{I: \, j\in I}F_I(x)\right)\cof({\f}_i^j)d_{\f} {\f}_i^j\right].
\end{array}}}$$}

{Thus, for $i=1, \ldots, r$ and $j=1, \ldots, m$, we can write
\begin{equation}\label{Ldelta2}
\displaystyle\sum_I{F_I(x)d_{\f}N_I({\f})}= \displaystyle\sum_{i,j}{\beta_{i}^j(x,{\f})d_{\f} {\f}_i^j},
\end{equation} where $$\beta_{i}^j(x,{\f})=\left(\displaystyle\sum_{I: \, j\in I}F_I(x)\right)\cof({\f}_i^j).$$}


{We will denote the rows of the Matrix (\ref{LijLdelta}) by $R_i^j=\left(  \pi(d_x\z_i^j(x)) \ , -d_{\f}{\f}_i^j \right)$, for $i=1,\ldots,r$ and $j=1,\ldots,m$, and we denote the last row of the Matrix (\ref{LijLdelta}) by $R_{\d}$.
Replacing the row $R_{\d}$ by $$R_{\d}+\sum_{i,j}\beta_i^j(x,{\f})R_i^j$$ for $i=1,\ldots,r$ and $j=1, \ldots, m$, we obtain a new matrix 
\begin{equation}\label{novaLijLdelta}\left[\setlength{\arraycolsep}{0.1cm}{\begin{array}{ccc}
  \pi(d_x\z_1^1(x))        & \vdots &     \\
  \vdots                       & \vdots & -Id_{mr}     \\
  \pi(d_x\z_{r}^m(x))  & \vdots &    \\
 \cdots \ \cdots \ \cdots \ \cdots  \ \cdots \ \cdots & \vdots & \cdots \ \cdots  \ \cdots \ \cdots \ \cdots \\
  R_{\d}'  & \vdots &   R_{\d}''
\end{array}}\right]
\end{equation} which has rank equal to the rank of the Matrix (\ref{LijLdelta}), where 
\begin{equation*}
R_{\d}''=\displaystyle\sum_I{F_I(x)d_{\f}N_I({\f})}+\sum_{i,j}\beta_i^j(x,{\f})(-d_{\f} {\f}_i^j)\overset{(\ref{Ldelta2})}{=}\vec{0}
\end{equation*} and
\begin{equation*}{\renewcommand{\arraystretch}{2.5}\begin{array}{ll}
R_{\d}'&=\pi\left(\displaystyle\sum_I{N_I({\f})d_xF_I(x)}\right)+ \displaystyle\sum_{i,j}\beta_i^j(x,{\f})\pi\left(d_x \z_i^j(x)\right)\\
 &=\pi\left(\displaystyle\sum_I{N_I({\f})d_xF_I(x)}+ \sum_{i,j}\beta_i^j(x,{\f})d_x \z_i^j(x)\right).
\end{array}}
\end{equation*} Note that for each $\bar{x}\in\mathcal{U}\cap\S^{k}(\o)$, we have $\z_i^j(\bar{x})={\f}_i^j$. In this case, Equation (\ref{Ldelta2}) implies that $$\displaystyle\sum_{i,j}\beta_i^j(\bar{x},{\f})d_x \z_i^j(\bar{x})=\sum_{i,j}\beta_i^j(\bar{x},\z^{k-1}(\bar{x}))d_x \z_i^j(\bar{x})=\displaystyle\sum_I{F_I(\bar{x})d_xN_I(\z^{k-1}(\bar{x}))}.$$ Thus, at $\bar{x}$ $$R_{\d}'=\pi\left(\displaystyle\sum_I{N_I(\z^{k-1}(\bar{x}))d_xF_I(\bar{x})}+ \displaystyle\sum_I{F_I(\bar{x})d_xN_I(\z^{k-1}(\bar{x}))}\right)=\pi(d\d_k(\bar{x}))$$ and the Matrix (\ref{novaLijLdelta}) is equal to
\begin{equation*}\left[\setlength{\arraycolsep}{0.1cm}{\begin{array}{ccc}
  \pi(d_x\z_1^1(\bar{x}))       &  \vdots &                                     \\
  \vdots                      &  \vdots &        -Id_{mr}    \\
  \pi(d_x\z_{r}^m(\bar{x})) &  \vdots &                  \\
 \cdots \ \cdots \ \cdots \ \cdots  \ \cdots \ \cdots & \vdots & \cdots \ \cdots  \ \cdots \ \cdots \ \cdots \\
 \pi(d\d_k(\bar{x})) & \vdots &   \vec{0}
\end{array}}\right].
\end{equation*} Thus, for each $\bar{x}\in\mathcal{U}\cap\S^{k}(\o)$, $\z^{k-1}\pitchfork N_{\S^{k-1}}^{\ast}M^{r}$ in $T_{\S^{k-1}}^{\ast}M^{r}$ at $\bar{x}$ if and only if $\pi(d\d_k(\bar{x}))\neq0$, that is, the restriction of $d\d_k(\bar{x})$ to $T_{\bar{x}}\S^{k-1}(\o)$ is not zero, which means that $d\d_k(\bar{x})\notin\langle d F_1(\bar{x}),\ldots,d F_{m-r}(\bar{x})\rangle$, or equivalently $\rank\left(dF_1(\bar{x}),\ldots,dF_{m-r}(\bar{x}),d\d_k(\bar{x})\right)=m-r+1$.}

{Now suppose that $\o$ satisfies the intersection properties $I_k$ on $\mathcal{U}$. By property $(b)$ of Definition \ref{def2point3}, we have that $\dim\langle\z_1(x), \ldots, \z_{r}(x)\rangle\cap N_x^{\ast}\S^{k-1}(\o)$ is either equal to $0$ or equal to $1$ for $x\in\mathcal{U}\cap\S^{k-1}(\o)$. If $\dim\langle\z_1(x), \ldots, \z_{r}(x)\rangle\cap N_x^{\ast}\S^{k-1}(\o)=1$, then $\d_k(x)=0$ and $x\in\mathcal{U}\cap\S^{k}(\o)$. In this case, the tranversality given by property $(a)$ of Definition \ref{def2point3} implies that $\rank\left(dF_1(x),\ldots,dF_{m-r}(x),d\d_k(x)\right)=m-r+1$.}

{On the other hand, we assume that properties $(i)$ and $(ii)$ hold for each $x\in\mathcal{U}\cap\S^{k-1}(\o)$. By property $(i)$, the intersection property $(b)$ of Definition \ref{def2point3} holds on $\mathcal{U}$. If $\dim\langle\z_1(x), \ldots, \z_{r}(x)\rangle\cap N_x^{\ast}\S^{k-1}(\o)=0$ then $\z^{k-1}(x)$ does not intersect $N_{\S^{k-1}}^{\ast}M^r$, thus $\z^{k-1}\pitchfork N_{\S^{k-1}}^{\ast}M^{r}$ in $T_{\S^{k-1}}^{\ast}M^{r}$ at $x$. If $\dim\langle\z_1(x), \ldots, \z_{r}(x)\rangle\cap N_x^{\ast}\S^{k-1}(\o)=1$ then $x\in\mathcal{U}\cap\S^{k}(\o)$ by Definition \ref{defiSk} and $\rank\left(dF_1(x),\ldots,dF_{m-r}(x),d\d_k(x)\right)=m-r+1$ by property $(ii)$. Thus $\z^{k-1}\pitchfork N_{\S^{k-1}}^{\ast}M^{r}$ in $T_{\S^{k-1}}^{\ast}M^{r}$ at $x$ and $\o$ satisfies the intersection properties $I_k$ on $\mathcal{U}$.}

{Finally, if $\o$ satisfies the intersection properties $I_k$ on $\mathcal{U}$, it follows by Definition \ref{defiSk} that $\mathcal{U}\cap\S^{k}(\o)=\{x\in\mathcal{U} \, | \, F_1(x)=\ldots=F_{m-r}(x)=\d_k(x)=0\}$.}
\end{proof}

The following technical lemma will be used in the proof of Lemma \ref{indepdabase}.


%


\begin{lema}\label{lematecnicodois} Let $f_i:\mathcal{V}\subset\R^{\l}\rightarrow\R, i=1, \ldots,s$ be smooth functions defined on an open neighborhood of $\R^{\l}$. Let $M\subset\R^{\l}$ be a manifold given locally by $M=\{x\in\mathcal{V}| f_1(x)=\ldots=f_s(x)=0\}$, with $\rank(df_1(x), \ldots,df_s(x))=s$, for all $x\in M\cap\mathcal{V}$. If $g,h:\mathcal{V}\subset\R^{\l}\rightarrow\R$ are smooth functions such that $g(x)=\lambda(x)h(x)$, for all $ x\in M\cap\mathcal{V}$ and some smooth function $\lambda:\mathcal{V}\rightarrow\R$, then:
\begin{enumerate}[(i)]
\item If $\lambda(x)\neq0$ and $x\in M$ then $g(x)=0\Leftrightarrow h(x)=0.$
\item If $\lambda(x)\neq0$, $x\in M$ and $h(x)=0$ then $$\langle df_1(x), \ldots, df_s(x),dg(x)\rangle=\langle df_1(x), \ldots, df_s(x), dh(x)\rangle.$$
\end{enumerate}
\end{lema}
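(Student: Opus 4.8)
The plan is to treat the two parts separately, with part (i) being essentially immediate and part (ii) resting on a standard ideal-membership argument for smooth functions vanishing on a regular submanifold. For part (i), since $g(x)=\lambda(x)h(x)$ for every $x\in M\cap\mathcal{V}$ and $\lambda(x)\neq0$, the equivalence is immediate: if $g(x)=0$ then $\lambda(x)h(x)=0$ forces $h(x)=0$, while $h(x)=0$ gives $g(x)=\lambda(x)\cdot0=0$.

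For part (ii), the key observation is that the difference $g-\lambda h$ vanishes identically on $M\cap\mathcal{V}$. Since $M$ is cut out locally by $f_1,\ldots,f_s$ with linearly independent differentials, these functions extend to a local coordinate system near $x$ by the submersion theorem, and then Hadamard's lemma applied in those coordinates lets me write, on a possibly smaller neighborhood of $x$,
$$g-\lambda h=\sum_{i=1}^s\mu_i f_i$$
for suitable smooth functions $\mu_i$. This passage from ``$g-\lambda h$ vanishes on $M$'' to ``$g-\lambda h$ belongs to the ideal generated by the $f_i$'' is the heart of the proof and the step I expect to require the most care, since it is exactly where the regularity hypothesis $\rank(df_1(x),\ldots,df_s(x))=s$ is used.

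Differentiating this identity at the point $x\in M$ and using both $f_i(x)=0$ and $h(x)=0$, the product rule gives
$$dg(x)-\lambda(x)\,dh(x)=\sum_{i=1}^s\mu_i(x)\,df_i(x),$$
because the terms $f_i(x)\,d\mu_i(x)$ and $h(x)\,d\lambda(x)$ both vanish. Since $\lambda(x)\neq0$, this single linear relation shows simultaneously that $dg(x)\in\langle df_1(x),\ldots,df_s(x),dh(x)\rangle$ and, solving for $dh(x)$, that $dh(x)\in\langle df_1(x),\ldots,df_s(x),dg(x)\rangle$; hence the two spans coincide, which is the claimed equality. The role played by $\lambda(x)\neq0$ is precisely to guarantee that the coefficient of $dh(x)$ is invertible, so that neither inclusion degenerates.
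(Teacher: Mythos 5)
Your proposal is correct. Note, however, that the paper itself states this lemma without any proof at all (it is introduced only as a technical tool for Lemma \ref{indepdabase}), so there is no argument in the text to compare yours against; your write-up simply supplies the missing justification. Part (i) is indeed immediate. For part (ii), your route through Hadamard's lemma is sound: completing $f_1,\ldots,f_s$ to a coordinate system via the rank hypothesis, writing $g-\lambda h=\sum_{i=1}^s\mu_i f_i$ near $x$, and differentiating at $x$ using $f_i(x)=0$ and $h(x)=0$ gives $dg(x)-\lambda(x)\,dh(x)\in\langle df_1(x),\ldots,df_s(x)\rangle$, from which both inclusions follow because $\lambda(x)\neq0$. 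A marginally shorter variant of the same step, which avoids invoking Hadamard, is to observe that $g-\lambda h$ vanishes on $M$, hence $d(g-\lambda h)(x)$ annihilates $T_xM$, and by the rank condition the annihilator of $T_xM$ is exactly $\langle df_1(x),\ldots,df_s(x)\rangle$; this yields the same relation $dg(x)-\lambda(x)\,dh(x)=\sum_{i}\mu_i(x)\,df_i(x)$ without needing the full ideal-membership statement. Either way the conclusion is established.
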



%


\begin{lema}\label{indepdabase} The definitions of $\S^{k+1}(\o)$ and $A_k(\o)$ do not depend on the choice of the basis $\{\z_1, \ldots, \z_{n-k}\}$, for every $k\geq 1$.
\end{lema}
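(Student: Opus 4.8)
The plan is to argue by induction on $k$, reducing everything to a comparison of two admissible local coframes via Lemma \ref{eqlocalSk} and the technical Lemma \ref{lematecnicodois}. The set $\S^1(\o)$ is defined by the rank condition alone and $A_0(\o)=M\setminus\S^1(\o)$, so no choice is involved, and this serves as the base of the induction. Assume $\S^k(\o)$ has already been shown to be a well-defined submanifold. Fix $p\in\S^k(\o)$, a neighborhood $\mathcal{U}$, and functions $F_1,\ldots,F_{m-r}$ (here $r=n-k$) with $\mathcal{U}\cap\S^k(\o)=\{F_1=\cdots=F_{m-r}=0\}$ and $N_x^{\ast}\S^k(\o)=\langle dF_1(x),\ldots,dF_{m-r}(x)\rangle$. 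It suffices to take two $r$-coframes $\{\z_1,\ldots,\z_r\}$ and $\{\z_1',\ldots,\z_r'\}$, each restricting on $\mathcal{U}\cap\S^k(\o)$ to a basis of a supplement of $V:=\langle\bar{\o}(x)\rangle\cap N_x^{\ast}\S^{k-1}(\o)$ in $W:=\langle\bar{\o}(x)\rangle$, and to compare the associated determinants $\d_{k+1}$ and $\d_{k+1}'$.

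The key step is the pointwise identity $\d_{k+1}'(x)=\det(A(x))\,\d_{k+1}(x)$ on $\mathcal{U}\cap\S^k(\o)$, where $A=(a_{ij})$ arises from writing each $\z_i'=\sum_j a_{ij}\z_j+v_i$ according to the splitting $W=S\oplus V$, with $S=\langle\z_1,\ldots,\z_r\rangle$ and $v_i\in V$. I use that $\S^k(\o)\subseteq\S^{k-1}(\o)$ forces $V\subseteq N_x^{\ast}\S^{k-1}(\o)\subseteq N_x^{\ast}\S^k(\o)=\langle dF_1,\ldots,dF_{m-r}\rangle$; hence each $v_i$ is a linear combination of the first $m-r$ columns of the determinant. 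Expanding $\d_{k+1}'$ by multilinearity in its last $r$ columns, every term carrying a $v_i$ has a column already present among $dF_1,\ldots,dF_{m-r}$ and so vanishes, which leaves exactly $\det(A)\,\d_{k+1}$. The matrix $A$ is invertible because reduction modulo $V$ carries $\{\z_i\}$ and $\{\z_i'\}$ to two bases of $W/V$. Finally $x\mapsto\det(A(x))$ is smooth and nowhere zero on $\mathcal{U}\cap\S^k(\o)$, so it extends to a smooth nowhere-zero $\lambda$ on a smaller $\mathcal{U}$, giving $\d_{k+1}'=\lambda\,\d_{k+1}$ on $\mathcal{U}\cap\S^k(\o)$.

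With $g=\d_{k+1}'$, $h=\d_{k+1}$, $\lambda$ as above, and the manifold $\{F_1=\cdots=F_{m-r}=0\}=\mathcal{U}\cap\S^k(\o)$, all hypotheses of Lemma \ref{lematecnicodois} hold. Part (i) yields $\d_{k+1}'(x)=0\Leftrightarrow\d_{k+1}(x)=0$ on $\mathcal{U}\cap\S^k(\o)$, so by the local description in Lemma \ref{eqlocalSk} the two coframes define the same set $\mathcal{U}\cap\S^{k+1}(\o)$; as $A_k(\o)=\S^k(\o)\setminus\S^{k+1}(\o)$, it is unchanged as well. Part (ii) yields $\langle dF_1,\ldots,dF_{m-r},d\d_{k+1}'\rangle=\langle dF_1,\ldots,dF_{m-r},d\d_{k+1}\rangle$ at the points of $\S^{k+1}(\o)$; thus the rank condition (ii) of Lemma \ref{eqlocalSk} holds for one coframe exactly when it holds for the other, so whether $\o$ satisfies the intersection property $I_{k+1}$, and the normal space $N_x^{\ast}\S^{k+1}(\o)$ itself, are independent of the choice. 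This completes the inductive step.

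The step I expect to be delicate is the algebraic identity $\d_{k+1}'=\det(A)\,\d_{k+1}$: it hinges on the inclusion $N_x^{\ast}\S^{k-1}(\o)\subseteq N_x^{\ast}\S^k(\o)$ (so that the correction terms $v_i$ are absorbed by the $dF_i$ columns), on $\dim V=k-1$ from Lemma \ref{dimensaointersecao} to guarantee the decomposition $W=S\oplus V$, and on producing a genuinely smooth ambient factor $\lambda$ so that Lemma \ref{lematecnicodois} applies verbatim.
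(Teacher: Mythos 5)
Your proposal is correct and follows essentially the same route as the paper: induction on $k$, the decomposition $\z_i'=\sum_j a_{ij}\z_j+v_i$ with $v_i$ in $\langle\bar{\o}(x)\rangle\cap N_x^{\ast}\S^{k-1}(\o)$, the identity $\d_{k+1}'=\det(A)\,\d_{k+1}$ with $\det(A)\neq0$, and then both parts of Lemma \ref{lematecnicodois} to transfer the vanishing locus and the rank condition. Your explicit justification that the $v_i$ columns are absorbed by the $dF_i$ columns (via $N_x^{\ast}\S^{k-1}(\o)\subseteq N_x^{\ast}\S^{k}(\o)$) makes precise a step the paper leaves implicit, but the argument is the same.
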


\begin{proof} As for the definition of $\S^{k+1}(\o)$ and $A_k(\o)$, for $k\geq 1$, we will proceed by induction on $k$. First, note that the definition of $\S^1(\o)$ does not depend on the choice of any basis. Then, assume as induction hypothesis that the definition of $\S^i(\o)$ does not depend on the choice of the basis for every $i\leq k$.
{We know that, for each $p\in\S^k(\o)$, there is an open neighborhood $\mathcal{U}\subset M$ of $p$ so that} 
\begin{equation*}\setlength{\arraycolsep}{0.06cm}{\begin{array}{cll}
\mathcal{U}\cap\S^{k}(\o)&=&\{x\in\mathcal{U}: F_1(x)=\ldots=F_{m-n+1}(x)=\d_2(x)=\ldots=\d_{k}(x)=0\},\\
\mathcal{U}\cap\S^{k+1}(\o)&=&\{x\in\mathcal{U}: F_1(x)=\ldots=F_{m-n+1}(x)=\d_2(x)=\ldots=\d_{k+1}(x)=0\},\\
\end{array}}
\end{equation*} with $\rank(dF_1(x),\ldots,dF_{m-n+1}(x),d\d_2(x),\ldots,d\d_{k}(x))=m-n+k$, for $x\in\mathcal{U}\cap\S^{k}(\o)$ and $\rank(dF_1(x),\ldots,dF_{m-n+1}(x),d\d_2(x),\ldots,d\d_{k+1}(x))=m-n+k+1$, for $x\in\mathcal{U}\cap\S^{k+1}(\o)$.
Let us recall that $$\d_{k+1}(x)=\det(dF_1,\ldots,dF_{m-n+1},d\d_2,\ldots,d\d_{k}, \z_1, \ldots, \z_{n-k})(x),$$ {where $\{\z_1(x), \ldots, \z_{n-k}(x)\}$ is a smooth $(n-k)$-coframe defined on $\mathcal{U}$ which is a basis of a vector subspace supplementary to $\langle\bar{\o}(x)\rangle\cap N_x^{\ast}\S^{k-1}(\o)$ in $\langle\bar{\o}(x)\rangle$ for each $x\in\mathcal{U}\cap\S^k(\o)$.}

{Let us consider $\{\tilde{\z}_1(x), \ldots, \tilde{\z}_{n-k}(x)\}$ a smooth $(n-k)$-coframe defined on $\mathcal{U}$ such that, for each $x\in\mathcal{U}\cap\S^k(\o)$, $\{\tilde{\z}_1(x), \ldots, \tilde{\z}_{n-k}(x)\}$ is another basis of a vector subspace supplementary to $\langle\bar{\o}(x)\rangle\cap N_x^{\ast}\S^{k-1}(\o)$ in $\langle\bar{\o}(x)\rangle$.} Then, $$\langle\bar{\o}(x)\rangle=\left(\langle\bar{\o}(x)\rangle\cap N_x^{\ast}\S^{k-1}(\o)\right)\oplus\langle\tilde{\z}_1(x), \ldots, \tilde{\z}_{n-k}(x)\rangle$$ and {$\dim(\langle\tilde{\z}_1(x), \ldots, \tilde{\z}_{n-k}(x)\rangle\cap N_x^{\ast}\S^{k}(\o))$ is either equal to 0 or equal to $1$, for $x\in\mathcal{U}\cap\S^k(\o)$. Moreover,}

\begin{equation*}\left\{\renewcommand{\arraystretch}{2.5}{\begin{array}{l}
\tilde{\z}_1(x)=\displaystyle\sum_{\l=1}^{n-k}{a_{\l1}(x)\z_{\l}(x)} + \f_1(x)\\
\tilde{\z}_2(x)=\displaystyle\sum_{\l=1}^{n-k}{a_{\l2}(x)\z_{\l}(x)} + \f_2(x)\\
\vdots\\
\tilde{\z}_{n-k}(x)=\displaystyle\sum_{\l=1}^{n-k}{a_{\l(n-k)}(x)\z_{\l}(x)} + \f_{n-k}(x)\\
\end{array}}\right.
\end{equation*} where $a_{ij}(x)\in\R$ and $\f_j(x)\in\langle\bar{\o}(x)\rangle\cap N_x^{\ast}\S^{k-1}(\o)$, for $j=1, \ldots, n-k$. {We will show that for each $x\in\mathcal{U}\cap\S^k(\o)$}, $$\det(A(x))=\left|\begin{array}{cccc}
a_{11}(x) & a_{12}(x) & \cdots & a_{1(n-k)}(x)\\
\vdots & \vdots & \ddots & \vdots\\
a_{(n-k)1}(x) & a_{(n-k)2}(x) & \cdots & a_{(n-k)(n-k)}(x)\\
\end{array}\right|\neq0.$$

Suppose that the statement is false, that is, $\det(A(x))=0$.
This means that the columns of matrix $A(x)$ are linearly dependent.
So we can suppose without loss of generality that the first column of $A(x)$ can be written as a linear combination of the others columns: $$(a_{11}(x), \ldots, a_{(n-k)1}(x))=\displaystyle\sum_{s=2}^{n-k}{\lambda_s(a_{1s}(x), \ldots, a_{(n-k)s}(x))},$$ where $\lambda_s\in\R$, for $s=2, \ldots, n-k$. Thus, deleting $x$ in the notation, we have

$$\renewcommand{\arraystretch}{3.5}{\begin{array}{ll}
\tilde{\z}_1=\displaystyle\sum_{\l=1}^{n-k}{a_{\l1}\z_{\l}} + \f_1 &\Rightarrow\tilde{\z}_1=\displaystyle\sum_{\l=1}^{n-k}{\left(\displaystyle\sum_{s=2}^{n-k}{\lambda_sa_{\l s}}\right)\z_{\l}} + \f_1\\
&\Rightarrow\tilde{\z}_1=\displaystyle\sum_{s=2}^{n-k}{\lambda_s\left(\displaystyle\sum_{\l=1}^{n-k}{a_{\l s}\z_{\l}}\right)} + \f_1\\
\end{array}}$$ then, $$\renewcommand{\arraystretch}{3.5}{\setlength{\arraycolsep}{0.1cm}{\begin{array}{lcl}
\tilde{\z}_1-\displaystyle\sum_{s=2}^{n-k}{\lambda_s\tilde{\z}_s}&=&\left[\displaystyle\sum_{s=2}^{n-k}{\lambda_s\left(\displaystyle\sum_{\l=1}^{n-k}{a_{\l s}\z_{\l}}\right)} + \f_1\right] -\displaystyle\sum_{s=2}^{n-k}{\lambda_s\left(\displaystyle\sum_{\l=1}^{n-k}{a_{\l s}\z_{\l}} + \f_s\right)}\\
&=&\f_1-\displaystyle\sum_{s=2}^{n-k}{\lambda_s\f_s}.
\end{array}}}$$ This means that $$\tilde{\z}_1-\displaystyle\sum_{s=2}^{n-k}{\lambda_s\tilde{\z}_s}\in\left(\langle\bar{\o}\rangle\cap N_x^{\ast}\S^{k-1}(\o)\right)\cap\langle\tilde{\z}_1, \ldots, \tilde{\z}_{n-k}\rangle=\{0\},$$ that is, $\tilde{\z}_1(x), \ldots, \tilde{\z}_{n-k}(x)$ are linearly dependent. However, this contradicts the initial assumption that $\{\tilde{\z}_1(x), \ldots, \tilde{\z}_{n-k}(x)\}$ is a basis of a vector subspace for each $x$ in $\mathcal{U}\cap\S^k(\o)$. Therefore, $\det(A(x))\neq0$.

{Let ${}^t\!A(x)$ be the transpose of matrix $A(x)$}. For each $x\in\mathcal{U}\cap\S^{k}(\o)$, we have $\det({}^t\!A(x))=\det(A(x))\neq0$ and, deleting $x$ in the notation, 
$$\setlength{\arraycolsep}{0.1cm}{\begin{array}{lll}
\tilde{\d}_{k+1}&=&\det(dF_1,\ldots,dF_{m-n+1},d\d_2,\ldots,d\d_{k},\tilde{\z}_1, \ldots, \tilde{\z}_{n-k})\\
&=&\det(dF_1,\ldots,dF_{m-n+1},d\d_2,\ldots,d\d_{k},\displaystyle\sum_{\l=1}^{n-k}{a_{\l1}\z_{\l}}, \ldots, \displaystyle\sum_{\l=1}^{n-k}{a_{\l(n-k)}\z_{\l}})\\
&=&\det({}^t\!A)\det(dF_1,\ldots,dF_{m-n+1},d\d_2,\ldots,d\d_{k},\z_1, \ldots, \z_{n-k})\\
&=&\det({}^t\!A)\d_{k+1}.
\end{array}}$$ {So, by statement $(i)$ of Lemma \ref{lematecnicodois}, $\tilde{\d}_{k+1}(x)=0\Leftrightarrow \d_{k+1}(x)=0$ for $x\in\mathcal{U}\cap\S^{k}(\o)$. Since $\d_{k+1}(x)=0$ if and only if $\dim(\langle\z_1(x),\ldots,\z_{n-k}(x)\rangle\cap N_x^{\ast}\S^{k}(\o))=1$ and $\tilde{\d}_{k+1}(x)=0$ if and only if $\dim(\langle\tilde{\z}_1(x), \ldots, \tilde{\z}_{n-k}(x)\rangle\cap N_x^{\ast}\S^{k}(\o))=1$, by Definition \ref{defiSk} we have that
\begin{equation*}\begin{array}{ccl}
x\in\mathcal{U}\cap\S^{k+1}(\o) & \Leftrightarrow & \dim(\langle\z_1(x),\ldots,\z_{n-k}(x)\rangle\cap N_x^{\ast}\S^{k}(\o))=1\\
& \Leftrightarrow &\d_{k+1}(x)=0\\
& \Leftrightarrow &\tilde{\d}_{k+1}(x)=0\\
& \Leftrightarrow & \dim(\langle\tilde{\z}_1(x), \ldots, \tilde{\z}_{n-k}(x)\rangle\cap N_x^{\ast}\S^{k}(\o))=1
\end{array}
\end{equation*}}

{In particular, if $x\in\mathcal{U}\cap\S^{k+1}(\o)$ we have $\d_{k+1}(x)=0$ and $\tilde{\d}_{k+1}(x)=0$ so that by statement $(ii)$ of Lemma \ref{lematecnicodois}, 
$$\renewcommand{\arraystretch}{1.5}{\begin{array}{l}
\langle dF_1(x),\ldots,dF_{m-n+1}(x),d\d_2(x),\ldots,d\d_{k}(x), d\d_{k+1}(x)\rangle\\
=\langle dF_1(x),\ldots,dF_{m-n+1}(x),d\d_2(x),\ldots,d\d_{k}(x), d\tilde{\d}_{k+1}(x)\rangle,\end{array}}$$ which implies that $\rank(dF_1(x),\ldots,dF_{m-n+1}(x),d\d_2(x),\ldots,d\d_{k}(x), d\tilde{\d}_{k+1}(x))$ is equal to $m-n+k+1$.
Therefore, the intersection properties $I_{k+1}$ and the definition of $\S^{k+1}(\o)$ do not depend on the choice of the basis $\{\z_1(x),\ldots,\z_{n-k}(x)\}$. Since $A_k(\o)=\S^k(\o)\setminus\S^{k+1}(\o),$ we conclude that $A_k(\o)$ also does not depend on the choice of the basis.} \end{proof}

\begin{obs}\label{akclosure} It is not difficult to see that $\S^k(\o)$ is a closed submanifold of $M$, for $k\geq1$. Moreover, we can write $$\S^k(\o)=A_k(\o)\cup\S^{k+1}(\o)=\displaystyle\cup_{i\geq k}A_i(\o)$$
so that $A_k(\o)=\S^k(\o)\setminus\S^{k+1}(\o).$ That is, the singular sets $A_k(\o)$ are $(n-k)$-dimensional submanifolds of $M$ such that $\overline{A_k(\o)}=\S^k(\o)$.
\end{obs}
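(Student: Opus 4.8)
The plan is to separate the purely set-theoretic content from the topological content. First I would read off the decomposition directly from Definition~\ref{defiSk}: reindexing that definition by $k\mapsto k+1$, the sets $A_k(\o)$ and $\S^{k+1}(\o)$ are precisely the points of $\S^k(\o)$ at which $\dim(\langle\z_1,\ldots,\z_{n-k}\rangle\cap N_x^{\ast}\S^k(\o))$ equals $0$ and $1$ respectively. Since the intersection property $I_{k+1}\,(b)$ guarantees that this dimension is never $\geq 2$, these two values exhaust the possibilities and are mutually exclusive, so $\S^k(\o)=A_k(\o)\sqcup\S^{k+1}(\o)$ and hence $A_k(\o)=\S^k(\o)\setminus\S^{k+1}(\o)$. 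Iterating this identity gives $\S^k(\o)=A_k(\o)\cup A_{k+1}(\o)\cup\cdots$, and by Lemma~\ref{skdimension} the dimensions $\dim\S^i(\o)=n-i$ strictly decrease, forcing $\S^{n+1}(\o)=\emptyset$; thus the union is finite and $\S^k(\o)=\cup_{i\geq k}A_i(\o)$.

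Next I would prove by induction on $k$ that each $\S^k(\o)$ is closed in $M$. For the base case, $\corank 1$ (Definition~\ref{corank1}) ensures that $\rank(\o_1(x),\ldots,\o_n(x))$ is never $\leq n-2$, so $\S^1(\o)$ coincides with the set where the rank fails to be maximal, which is the complement of the open set on which the rank is $n$; hence $\S^1(\o)$ is closed. For the inductive step, I would work with the intrinsic characterization furnished by Lemma~\ref{dimensaointersecao}, namely that on $\S^{k-1}(\o)$ a point lies in $\S^k(\o)$ exactly when $\dim(\langle\bar{\o}(x)\rangle\cap N_x^{\ast}\S^{k-1}(\o))=k-1$. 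On the manifold $\S^{k-1}(\o)$ both subspaces $\langle\bar{\o}(x)\rangle$ (of dimension $n-1$) and $N_x^{\ast}\S^{k-1}(\o)$ (of dimension $m-n+k-1$) have constant dimension, so $x\mapsto\dim(\langle\bar{\o}(x)\rangle\cap N_x^{\ast}\S^{k-1}(\o))$ is upper semicontinuous, because $\dim(U\cap W)=\dim U+\dim W-\dim(U+W)$ and $\dim(U+W)$ is lower semicontinuous. The direct-sum decomposition used in the proof of Lemma~\ref{dimensaointersecao} shows this dimension equals $k-2$ on $A_{k-1}(\o)$ and $k-1$ on $\S^k(\o)$, so it takes only these two values; therefore $\S^k(\o)$ is the locus where it attains its maximum $k-1$, a closed subset of $\S^{k-1}(\o)$. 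As $\S^{k-1}(\o)$ is closed in $M$ by the induction hypothesis, $\S^k(\o)$ is closed in $M$.

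Finally, the equality $\overline{A_k(\o)}=\S^k(\o)$ would follow by combining the two previous paragraphs. On the one hand $A_k(\o)\subset\S^k(\o)$ and $\S^k(\o)$ is closed, so $\overline{A_k(\o)}\subset\S^k(\o)$. On the other hand $A_k(\o)=\S^k(\o)\setminus\S^{k+1}(\o)$, where $\S^{k+1}(\o)$ is a closed submanifold of $\S^k(\o)$ of strictly smaller dimension $n-k-1$ (Lemma~\ref{skdimension}); a closed submanifold of strictly smaller dimension has empty interior, so $A_k(\o)$ is open and dense in the $(n-k)$-dimensional manifold $\S^k(\o)$, which gives the reverse inclusion $\S^k(\o)\subset\overline{A_k(\o)}$.

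I expect the main obstacle to be the closedness statement, since the sets $\S^k(\o)$ are defined only locally and through an auxiliary choice of coframe $\{\z_1,\ldots,\z_{n-k}\}$. The key to circumventing this is to discard the local defining equations in favour of the coordinate-free description of Lemma~\ref{dimensaointersecao} and to exploit upper semicontinuity of the intersection dimension; the one point requiring care is to confirm that on $\S^{k-1}(\o)$ this dimension assumes only the two values $k-2$ and $k-1$, which is exactly what the intersection properties $I_k$ and the decomposition established in Lemma~\ref{dimensaointersecao} provide.
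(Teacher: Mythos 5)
Your argument is correct, and it supplies a justification for a statement the paper deliberately leaves unproved (the remark is introduced with ``it is not difficult to see''), so there is no authorial proof to compare against. Your set-theoretic part is exactly what Definition \ref{defiSk} and property $(b)$ of Definition \ref{def2point3} give after reindexing, and the density argument in the last paragraph (a closed $(n-k-1)$-dimensional submanifold has empty interior in the $(n-k)$-dimensional manifold $\S^k(\o)$, so $A_k(\o)=\S^k(\o)\setminus\S^{k+1}(\o)$ is open and dense) is the standard way to get $\overline{A_k(\o)}=\S^k(\o)$ once closedness of $\S^k(\o)$ is known. For the closedness itself you take a coordinate-free route via the upper semicontinuity of $x\mapsto\dim(\langle\bar{\o}(x)\rangle\cap N_x^{\ast}\S^{k-1}(\o))$, using the two-valuedness supplied by Lemma \ref{dimensaointersecao} and its proof; this is valid (both subspaces have locally constant dimension on $\S^{k-1}(\o)$ and vary continuously, and $\dim(U+W)$ is lower semicontinuous), and it has the merit of bypassing the auxiliary coframe $\{\z_1,\ldots,\z_{n-k}\}$ entirely. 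The more immediate argument the author presumably has in mind is inductive and local: a point in $\overline{\S^k(\o)}$ lies in $\overline{\S^{k-1}(\o)}=\S^{k-1}(\o)$ by induction, and near any point of $\S^{k-1}(\o)$ Lemma \ref{eqlocalSk} exhibits $\S^k(\o)$ as the common zero set of the continuous functions $F_1,\ldots,F_{m-r},\d_k$, hence locally closed there. Both routes work; yours trades the dependence on the local equations for a small semicontinuity lemma, which is a reasonable exchange given that Lemma \ref{indepdabase} is needed anyway to make the local equations well defined.
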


Finally, based on the previous considerations, we define:

\begin{defi}\label{def:ncoframe} An $n$-coframe $\o$ is a Morin $n$-coframe if $\o$ has $\corank 1$ and it satisfies the intersection properties $I_{k}$ $(a)$ and $(b)$ for $k=2,\ldots,n$.  
\end{defi}

\begin{obs}\label{singak} By Definition \ref{def:ncoframe}, if $\o$ is a Morin $n$-coframe then $\o$ admits only singular points of type $A_{k}$ for $k=1,\ldots,n$.
\end{obs}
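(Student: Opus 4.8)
The plan is to prove that the singular locus of $\o$ is precisely $\bigcup_{k=1}^{n}A_k(\o)$, so that every singular point lies in some $A_k(\o)$ and is therefore a Morin singular point of type $A_k$ in the sense of Definition \ref{Morinsing}.

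First I would identify the singular set. Since $\o$ has $\corank 1$, the observation following Definition \ref{corank1} guarantees that $\rank(\o_1(x),\ldots,\o_n(x))$ equals either $n$ or $n-1$ for every $x\in M$; hence the singular points of $\o$, i.e.\ the points at which the rank is not maximal, are exactly the points of $\S^1(\o)$ by Definition \ref{defs1}. Next I would iterate the partition $\S^{k}(\o)=A_{k}(\o)\sqcup\S^{k+1}(\o)$, with $A_{k}(\o)=\S^{k}(\o)\setminus\S^{k+1}(\o)$, furnished by Definition \ref{defiSk} and recorded in Remark \ref{akclosure}. This partition is available for $k=1,\ldots,n-1$ precisely because $\o$, being a Morin $n$-coframe, satisfies the intersection properties $I_{k+1}$ for $k+1=2,\ldots,n$ (Definition \ref{def:ncoframe}). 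Iterating from $k=1$ then yields
\begin{equation*}
\S^1(\o)=A_1(\o)\sqcup A_2(\o)\sqcup\cdots\sqcup A_{n-1}(\o)\sqcup\S^n(\o).
\end{equation*}

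It then remains to treat the top stratum. By Lemma \ref{skdimension} one has $\dim\S^{k}(\o)=n-k$, so $\S^{n}(\o)$ is $0$-dimensional; a hypothetical $\S^{n+1}(\o)$, being a codimension-one submanifold of $\S^{n}(\o)$, would have dimension $-1$ and must therefore be empty. Hence $A_n(\o)=\S^n(\o)\setminus\S^{n+1}(\o)=\S^n(\o)$, the decreasing chain $\S^1(\o)\supset\S^2(\o)\supset\cdots$ terminates, and combining the two observations gives $\S^1(\o)=\bigcup_{k=1}^{n}A_k(\o)$. Thus every singular point of $\o$ belongs to $A_k(\o)$ for some $k\in\{1,\ldots,n\}$, which is the assertion.

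The argument is essentially formal once the stratification of Section \ref{s1} is in place; the only step that genuinely needs care is the termination. Concretely, I expect the main obstacle to be confirming that the inductive passage from $\S^{k-1}(\o)$ to $\S^{k}(\o)$ remains well defined all the way up to $k=n$ and stops exactly there, so that no singular point escapes every $A_k(\o)$. This is guaranteed by the dimension count of Lemma \ref{skdimension}, which forces $\S^{n+1}(\o)=\emptyset$.
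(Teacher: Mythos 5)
Your argument is correct and is essentially the same reasoning the paper intends: the Remark is stated without proof as an immediate consequence of Definition \ref{def:ncoframe}, and your write-up simply makes explicit the chain the paper already records — the singular locus is $\S^1(\o)$ by corank $1$, the partition $\S^k(\o)=A_k(\o)\sqcup\S^{k+1}(\o)$ of Remark \ref{akclosure}, and the identification $A_n(\o)=\S^n(\o)$ forced by the dimension count of Lemma \ref{skdimension} (which the paper itself uses later, e.g.\ in Lemma \ref{ptosansaozeros}). No gaps.
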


As we mentioned in Section \ref{Introduction}, fixed a Riemannian metric on $M$, we can consider vector fields instead of 1-forms and define the notion of Morin $n$-frames analogously to the definition of Morin $n$-coframes:

\begin{defi}\label{def:nframe} {An $n$-frame $V=(V_1,\ldots, V_n): M\rightarrow TM^n$ is a Morin $n$-frame if $V$ has $\corank 1$ and it satisfies the intersection properties $I_{k}$ $(a)$ and $(b)$ for $k=2,\ldots,n$.}
\end{defi}

Next, we present some examples of Morin $n$-frames.

\begin{ex}\label{ex1} Let $f:M^m\rightarrow\R^n$ be a smooth Morin map defined on an $m$-dimensional Riemannian manifold $M$, with $m\geq n$. The $n$-frame $V(x)=(\nabla f_1(x), \ldots, \nabla f_n(x))$ given by the gradient of the coordinate functions of $f$ is, clearly, a Morin $n$-coframe whose singular points are the same that the singular points of $f$. That is, $A_k(V)=A_k(f)$, $\forall k=1, \ldots, n$.
\end{ex}

\begin{ex}\label{ex2} Let $a\in\R$ be a regular value of a $C^2$ mapping $f:\R^3\rightarrow\R$. Suppose that $M=f^{-1}(a)$ and consider $V=(V_1,V_2)$ be a $2$-frame on $M$, given by
\begin{equation*}\begin{array}{ccc}
 V_1(x)&=&(-f_{x_2}(x),f_{x_1}(x),0);\\
 V_2(x)&=&(-f_{x_3}(x),0,f_{x_1}(x)).
\end{array}
\end{equation*} Since $a$ is a regular value of $f$, we have that $\nabla f(x)=(f_{x_1}(x), f_{x_2}(x), f_{x_3}(x))\neq\vec{0}$, $\forall x\in M$. Thus, $\rank(V_1(x), V_2(x))$ is either equal to $2$ or equal to $1$ . The singular points of $V$ are the points $x\in M$ where $\rank(V_1(x), V_2(x))=1$, that is, $$\S^1(V)=\{x\in M | f_{x_1}(x)=0\}$$ {and $V=(V_1, V_2)$ has $\corank 1$ if and only if  $\rank(\nabla f(x), \nabla f_{x_1}(x))=2$ for each $x\in\S^1(V)$. In this case, $\S^1(V)$ is a submanifold of $M$ of dimension 1}. Let $x\in\S^1(V)$ be a singular point of $V$, then the space $\langle V_1(x),V_2(x)\rangle$ is spanned by the vector $e_1=(1,0,0)$ and $x\in A_2(V)$ if and only if $$\rank(\nabla f(x), \nabla f_{x_1}(x), e_1)<3,$$ that is, if and only if 
$\d_2:=f_{x_2}f_{x_1x_3}-f_{x_3}f_{x_1x_2}$ vanishes at $x$. {Moreover, $V$ satisfies the intersection properties $I_2$ if and only if $\rank(\nabla f(x), \nabla f_{x_1}(x), \nabla\d_2(x))=3$ for $x\in A_2(V)$.} In this case, $A_2(V)$ is a submanifold of $M$ of dimension 0. Therefore, $V=(V_1, V_2)$ is a Morin $2$-frame if and only if $\rank(\nabla f(x), \nabla f_{x_1}(x))=2$ on the singular set $\S^1(V)=\{x\in M | f_{x_1}(x)=0\}$ and $\det(\nabla f(x), \nabla f_{x_1}(x), \nabla\d_2(x))\neq0$ on $A_2(V)=\{x\in M | f_{x_1}(x)=0, \d_2(x)=0\}.$
\end{ex}

\begin{ex}\label{ex3} Let us apply Example \ref{ex2} to the $2$-frame $V=(V_1, V_2)$ defined on the torus $\emph{T}:=f^{-1}(R^2)$, where $R^2$ is a regular value of $$f(x_1,x_2,x_3)=(\sqrt{x_2^2+x_3^2}-a)^2+(x_1+x_2)^2,$$ with $a>R$. Then, one can verify that $\S^1(V)=\{x\in \emph{T} \, | \, x_1+x_2=0\}$, that is, 
$$\S^1(V)=\{(x_1,x_2,x_3)\in\R^3 | \sqrt{x_2^2+x_3^2}-a)^2=R^2\}$$ and $\rank(\nabla f(x), \nabla f_{x_1}(x))$ is equal to 
$$\rank\left[\begin{array}{ccc}
0& \ffrac{2x_2(\sqrt{x_2^2+x_3^2}-a)}{\sqrt{x_2^2+x_3^2}} & \ffrac{2x_3(\sqrt{x_2^2+x_3^2}-a)}{\sqrt{x_2^2+x_3^2}}\\
1&1&0
\end{array}\right]
$$ which is $2$, for all $x\in \emph{T}\cap\S^1(V)$. Moreover, $$\d_2(x)=\ffrac{-4x_3(\sqrt{x_2^2+x_3^2}-a)}{\sqrt{x_2^2+x_3^2}},$$ so that $A_2(V)=\{x\in \emph{T} \, | \, x_1+x_2=0; x_3=0\}$ which is the set given by the points $(-a-R,a+R,0)$, $(a+R,-a-R,0)$, $(-a+R,a-R,0)$ and $(a-R,-a+R,0)$. It is not difficult to see that $\rank(\nabla f(x), \nabla f_{x_1}(x), \nabla\d_2(x))=3, \forall x\in\emph{T}\cap A_2(V)$. Therefore, the frame $V=(V_1,V_2)$ given by
\begin{equation*}\begin{array}{lll}
 V_1(x)&=&\left(\frac{-2x_2(\sqrt{x_2^2+x_3^2}-a)}{\sqrt{x_2^2+x_3^2}}-2(x_1+x_2),2(x_1+x_2),0\right);\\
 V_2(x)&=&\left(\frac{-2x_3(\sqrt{x_2^2+x_3^2}-a)}{\sqrt{x_2^2+x_3^2}},0,2(x_1+x_2)\right).
\end{array}
\end{equation*} is a Morin $2$-frame defined on the torus $\emph{T}$ which admits singular points of type $A_1$ and $A_2$.
\end{ex}

\begin{ex}\label{ex4} Let $a\in\R$ be a regular value of a $C^2$ mapping $f:\R^3\rightarrow\R$. Suppose that $M=f^{-1}(a)$ and consider $\overline{W_1}$ and $\overline{W_2}$ {be the orthogonal projections of $e_2=(0,1,0)$ and $e_3=(0,0,1)$ over $T_xM$} given by
\begin{equation*}\renewcommand{\arraystretch}{1.7}{\begin{array}{ccc}
 \overline{W_1}&=& e_2-\left\langle e_2,\frac{\nabla f}{|\nabla f|}\right\rangle\frac{\nabla f}{|\nabla f|};\\
 \overline{W_2}&=& e_3-\left\langle e_3,\frac{\nabla f}{|\nabla f|}\right\rangle\frac{\nabla f}{|\nabla f|}.\\
\end{array}}
\end{equation*} Let $W=(W_1,W_2)$ be the $2$-frame defined by $W_1=\left\|\nabla f\right\|^2\overline{W_1}$ and $W_2=\left\|\nabla f\right\|^2\overline{W_2}$, that is,
\begin{equation*}\begin{array}{ccc}
 W_1&=&(-f_{x_1}f_{x_2},f_{x_1}^2+f_{x_3}^2,-f_{x_2}f_{x_3});\\
 W_2&=&(-f_{x_1}f_{x_3},-f_{x_2}f_{x_3},f_{x_1}^2+f_{x_2}^2).
\end{array}
\end{equation*} {Note that in this case, $W_1$ and $W_2$ are gradients vector fields, that is, $W$ is a $2$-frame gradient}. It is not difficult to see that $\rank(W_1(x), W_2(x))$ is either equal to $2$ or equal to $1$ and the singular set of $W$ is $\S^1(W)=\{x\in M | f_{x_1}(x)=0\}$. Let $x\in\S^1(W)$ be a singular point of $W$, then the space $\langle W_1(x),W_2(x)\rangle$ is spanned by the vector $(0,f_{x_3}, -f_{x_2})$, so that $A_2(W)=\{x\in M | f_{x_1}(x)=0, f_{x_1x_1}(x)=0\}$. Therefore, $W=(W_1, W_2)$ is a Morin $2$-frame if and only if $\rank(\nabla f(x), \nabla f_{x_1}(x))=2$ on the singular set $\S^1(W)$ and $\det(\nabla f(x), \nabla f_{x_1}(x), \nabla f_{x_1x_1}(x))\neq0$ on $A_2(W)$.
\end{ex}

\begin{ex} \label{ex5} Let us apply Example \ref{ex4} to the $2$-frame $W=(W_1, W_2)$ defined on the torus $\emph{T}:=f^{-1}(R^2)$ of Example \ref{ex3}. In this situation, one can verify that $\S^1(W)$ is the same singular set as $\S^1(V)$ in the Example \ref{ex3}. Moreover, $\rank(\nabla f(x), \nabla f_{x_1}(x))=2, \forall x\in\S^1(W)$. However, since $f_{x_1x_1}(x)=2, \forall x\in\S^1(W)$, we have that $W$ does not admits singular points of type $A_2$. That is, $W$ is Morin $2$-frame on $\emph{T}$ which admits only Morin singularities of type $A_1$.
\end{ex}

\begin{ex} \label{ex6} Let us consider the $2$-frames $V=(V_1,V_2)$ and $W=(W_1, W_2)$ from Examples \ref{ex2} and \ref{ex4} defined on {the unit sphere $M:=f^{-1}(1)$}, where $f(x_1,x_2,x_3)=x_1^2+x_2^2+x_3^2$. We know that the singular sets of $V$ and $W$ are the same, that is, $\S^1(V)=\S^1(W)=\{x\in M \, | \, x_1=0\}$ and $\rank(\nabla f(x), \nabla f_{x_1}(x))=2$ for all singular point $x$. However, $\d_2(x)=0, \forall x\in\S^1(V)$, so that $\nabla\d_2\equiv\vec{0}$. On the other hand, $f_{x_1x_1}(x)\neq0, \forall x\in\S^1(W)$, so that $A_2(W)=\emptyset$. Therefore, $V$ is not a Morin $2$-frame and $W$ is a Morin $2$-frame that admits only Morin singularities of type $A_1$.
\end{ex}

\begin{ex}\label{ex7} In the Example \ref{ex6}, if we consider $f(x_1,x_2,x_3)=x_1^2-x_1x_2+x_3^2$ then one can verify that $V$ and $W$ are both Morin $2$-frames that admits only Morin singularities of type $A_1$. {Let us consider the case where $V$ of Example \ref{ex2} is defined on $M:=f^{-1}(-1)$ and $f(x_1,x_2,x_3)=x_1^2-x_1x_2+x_3^2$. It is easy to see that $-1$ is a regular value of $f$ and $\S^1(V)=\{x\in \emph{M} \, | \, 2x_1-x_2=0\}$. That is, 
$$\S^1(V)=\{(x_1,x_2,x_3)\in\R^3 | \, x_1^2-x_1x_2+x_3^2+1=0; 2x_1-x_2=0\}$$ and $\rank(\nabla f(x), \nabla f_{x_1}(x))$ is equal to
$$\rank\left[\begin{array}{ccc}
(2x_1-x_2)& -x_1 & 2x_3\\
2&-1&0
\end{array}\right]
$$ which is $2$, for all $x\in M\cap\S^1(V)$. Moreover, $\d_2(x)=2x_3$ and $$A_2(V)=\{(x_1,x_2,x_3)\in\R^3 \, | \, x_1^2-x_1x_2+x_3^2+1=0; 2x_1-x_2=0; x_3=0\}$$ which is the set given by the points $(1,2,0)$ and $(-1,-2,0)$. We also have that $\det(\nabla f(x), \nabla f_{x_1}(x), \nabla\d_2(x))$ is equal to
$$\det\left[\begin{array}{ccc}
(2x_1-x_2)& -x_1 & 2x_3\\
2&-1&0\\
0&0&2
\end{array}\right]=4x_1
$$ which is equal to $\pm4$ for each $x\in A_2(V)$. That is, $\rank(\nabla f(x), \nabla f_{x_1}(x), \nabla\d_2(x))=3$, $\forall x\in M\cap A_2(V)$. Therefore, the $2$-frame $V=(V_1,V_2)$ given by
\begin{equation*}\begin{array}{lll}
 V_1(x)&=&\left(x_1,  2x_1-x_2,0\right);\\
 V_2(x)&=&\left(-2x_3, 0, 2x_1-x_2\right).
\end{array}
\end{equation*} is a Morin $2$-frame defined on $M$ which admits singular points of type $A_1$ and $A_2$.}

\end{ex}


\section{Zeros of a generic 1-form $\x(x)$ associated to a Morin $n$-coframe}\label{s2}

Let $a=(a_1,\ldots,a_n)\in\R^n\setminus\{\vec{0}\}$ and let $\o=(\o_1,\ldots,\o_n)$ be a Morin $n$-coframe defined on an $m$-dimensional manifold $M$. In this section, we will consider the 1-form $\x(x)=\displaystyle\sum_{i=1}^{n}{a_i\o_i(x)}$ defined on $M$ and we will proof some properties of the zeros of $\x$ and its restrictions to the singular sets of $\o$.

\begin{lema}\label{zeroszsobresigma1} If $p$ is a zero of the 1-form $\x$ 
then $p\in\S^1(\o)$ and $p$ is a zero of $\x_{|_{\S^1(\o)}}$.
\end{lema}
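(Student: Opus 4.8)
The plan is to observe that both assertions follow directly once we unwind the meaning of ``$p$ is a zero of $\x$'', namely that $\x(p)=\sum_{i=1}^n a_i\o_i(p)$ is the zero covector in $T_p^{\ast}M$. The only external input required is the corank-$1$ dichotomy recorded in the remark after Definition \ref{corank1}; no transversality or induction enters at this first stage.

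First I would establish that $p\in\S^1(\o)$. Since $a=(a_1,\ldots,a_n)\neq\vec{0}$, the identity $\sum_{i=1}^n a_i\o_i(p)=0$ in $T_p^{\ast}M$ is a nontrivial linear relation among $\o_1(p),\ldots,\o_n(p)$; hence these $n$ covectors are linearly dependent and $\rank(\o_1(p),\ldots,\o_n(p))\leq n-1$. Because $\o$ is a Morin $n$-coframe it has $\corank 1$, and by the remark following Definition \ref{corank1} the rank at any point of $M$ is either $n$ or $n-1$. Combining these two facts forces $\rank(\o_1(p),\ldots,\o_n(p))=n-1$, which by Definition \ref{defs1} is precisely the condition $p\in\S^1(\o)$.

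It then remains to see that $p$ is a zero of $\x_{|_{\S^1(\o)}}$. Writing $\iota:\S^1(\o)\hookrightarrow M$ for the inclusion, the restriction $\x_{|_{\S^1(\o)}}$ is the pullback $\iota^{\ast}\x$, so at $p$ it is the covector $\x(p)$ restricted to the subspace $T_p\S^1(\o)\subset T_pM$. But $\x(p)=0$ as an element of $T_p^{\ast}M$, so its restriction to any subspace, in particular to $T_p\S^1(\o)$, vanishes identically; hence $p$ is a zero of $\x_{|_{\S^1(\o)}}$. The argument is essentially definition-chasing, and the only genuine obstacle—a very mild one—is recognizing that the corank-$1$ hypothesis is exactly what upgrades the inequality $\rank\leq n-1$ coming from the linear dependence into the equality $\rank=n-1$ needed to land in $\S^1(\o)$.
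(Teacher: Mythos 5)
Your proposal is correct and follows essentially the same argument as the paper: the nontrivial relation $\sum_i a_i\o_i(p)=0$ with $a\neq\vec{0}$ forces $\rank\leq n-1$, the corank-$1$ hypothesis upgrades this to equality so that $p\in\S^1(\o)$, and the vanishing of $\x(p)$ on all of $T_pM$ trivially gives vanishing on the subspace $T_p\S^1(\o)$. No discrepancies to report.
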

\begin{proof} Suppose that $\x(p)=0$. So $\rank(\o_1(p),\ldots,\o_n(p))\leq n-1$, since $a\neq\vec{0}$. However, the $n$-coframe $\o$ has $\corank 1$, thus $\rank(\o_1(p),\ldots,\o_n(p))=n-1$. That is, $p\in\S^1(\o)$. Moreover, $\x(p)=0$ implies that $T_pM\subset\ker(\x(p))$ and since $T_p\S_1(\o)\subset T_pM$, we conclude that $p$ is a zero of $\x_{|_{\S^1(\o)}}=0$.
\end{proof}

\begin{lema}\label{lemazerosrestricoes} If $p\in A_{k+1}(\o)$ then, for each $k=0,\ldots,n-2$, $p$ is a zero of $\x_{|_{\S^{k+1}(\o)}}$ if and only if $p$ is a zero of $\x_{|_{\S^{k}(\o)}}$.
\end{lema}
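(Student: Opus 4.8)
The plan is to reformulate the two conditions in terms of the conormal spaces $N_p^{\ast}\S^{j}(\o)$ and then reduce the equivalence to a single identity of subspaces of $\langle\bar{\o}(p)\rangle=\langle\o_1(p),\ldots,\o_n(p)\rangle$. The starting point is that $\x(p)=\sum_{i=1}^{n}a_i\o_i(p)$ always lies in $\langle\bar{\o}(p)\rangle$; hence ``$p$ is a zero of $\x_{|_{\S^{j}(\o)}}$'', which means that $\x(p)$ vanishes on $T_p\S^{j}(\o)$, is equivalent to
$$\x(p)\in\langle\bar{\o}(p)\rangle\cap N_p^{\ast}\S^{j}(\o).$$
With this, one inclusion of the lemma is immediate: since $\S^{k+1}(\o)\subset\S^{k}(\o)$ we have $N_p^{\ast}\S^{k}(\o)\subset N_p^{\ast}\S^{k+1}(\o)$, so every zero of $\x_{|_{\S^{k}(\o)}}$ is a zero of $\x_{|_{\S^{k+1}(\o)}}$. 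The real content is the converse, and I claim it follows from the equality
$$\langle\bar{\o}(p)\rangle\cap N_p^{\ast}\S^{k+1}(\o)=\langle\bar{\o}(p)\rangle\cap N_p^{\ast}\S^{k}(\o),$$
which holds exactly because $p\in A_{k+1}(\o)$.

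To establish this equality I would invoke the decomposition built into Definition \ref{defiSk} at the stage producing $A_{k+1}(\o)$ and $\S^{k+2}(\o)$. There, with $r=n-k-1$, the coframe $\{\z_1,\ldots,\z_{r}\}$ is chosen so that $\{\z_1(p),\ldots,\z_{r}(p)\}$ is a basis of a complement of $\langle\bar{\o}(p)\rangle\cap N_p^{\ast}\S^{k}(\o)$ in $\langle\bar{\o}(p)\rangle$; that is,
$$\langle\bar{\o}(p)\rangle=\left(\langle\bar{\o}(p)\rangle\cap N_p^{\ast}\S^{k}(\o)\right)\oplus\langle\z_1(p),\ldots,\z_{r}(p)\rangle.$$
Intersecting both sides with $N_p^{\ast}\S^{k+1}(\o)$ and using $N_p^{\ast}\S^{k}(\o)\subset N_p^{\ast}\S^{k+1}(\o)$ --- exactly the computation already carried out in the proof of Lemma \ref{dimensaointersecao} --- gives
$$\langle\bar{\o}(p)\rangle\cap N_p^{\ast}\S^{k+1}(\o)=\left(\langle\bar{\o}(p)\rangle\cap N_p^{\ast}\S^{k}(\o)\right)\oplus\left(\langle\z_1(p),\ldots,\z_{r}(p)\rangle\cap N_p^{\ast}\S^{k+1}(\o)\right).$$
Now $p\in A_{k+1}(\o)$ says precisely that $\dim\left(\langle\z_1(p),\ldots,\z_{r}(p)\rangle\cap N_p^{\ast}\S^{k+1}(\o)\right)=0$, so the last summand is trivial and the claimed equality drops out. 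Feeding this back through the reformulation, a zero of $\x_{|_{\S^{k+1}(\o)}}$ satisfies $\x(p)\in\langle\bar{\o}(p)\rangle\cap N_p^{\ast}\S^{k+1}(\o)=\langle\bar{\o}(p)\rangle\cap N_p^{\ast}\S^{k}(\o)$, hence is a zero of $\x_{|_{\S^{k}(\o)}}$.

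The case $k=0$ must be read off separately but causes no trouble: here $\S^{0}(\o)=M$ and $N_p^{\ast}\S^{0}(\o)=\{0\}$, the decomposition degenerates to $\langle\z_1(p),\ldots,\z_{n-1}(p)\rangle=\langle\bar{\o}(p)\rangle$, and the $A_{1}(\o)$ condition becomes $\langle\bar{\o}(p)\rangle\cap N_p^{\ast}\S^{1}(\o)=\{0\}$; a zero of $\x_{|_{\S^{1}(\o)}}$ then forces $\x(p)=0$, which is a zero of $\x_{|_{M}}$, in agreement with Lemma \ref{zeroszsobresigma1}. I expect the only delicate point to be purely organizational: keeping the index of the conormal flag and of the complementary coframe $\{\z_1,\ldots,\z_{r}\}$ synchronized with the inductive stage that defines $A_{k+1}(\o)$, and verifying that both subspaces under comparison sit inside $\langle\bar{\o}(p)\rangle$ so that the direct-sum manipulation is legitimate verbatim.
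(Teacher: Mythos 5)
Your proposal is correct and follows essentially the same route as the paper: both reduce each condition to $\x(p)\in N_p^{\ast}\S^{j}(\o)$, get the easy implication from $N_p^{\ast}\S^{k}(\o)\subset N_p^{\ast}\S^{k+1}(\o)$, and obtain the converse from the equality $\langle\bar{\o}(p)\rangle\cap N_p^{\ast}\S^{k+1}(\o)=\langle\bar{\o}(p)\rangle\cap N_p^{\ast}\S^{k}(\o)$, which in both arguments comes from the direct-sum decomposition of $\langle\bar{\o}(p)\rangle$ by the supplementary coframe together with the $A_{k+1}(\o)$ condition that $\langle\bar{\z}^{k+1}(p)\rangle$ meets $N_p^{\ast}\S^{k+1}(\o)$ trivially. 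Your write-up in fact makes the dimension count in the paper slightly more explicit, and the separate treatment of $k=0$ matches the conventions $\S^0(\o)=M$, $N_p^{\ast}\S^0(\o)=\{0\}$.
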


\begin{proof} Suppose that $p\in A_{k+1}(\o)$ and that, locally, we have:
\begin{equation*}\setlength{\arraycolsep}{0.06cm}{\begin{array}{cl}
\mathcal{U}\cap\S^{k}(\o)&=\{x\in\mathcal{U} \, | \, F_1(x)=\ldots=F_{m-n+1}(x)=\d_2(x)=\ldots=\d_k(x)=0\};\\
\mathcal{U}\cap\S^{k+1}(\o)&=\{x\in\mathcal{U} \, | \, F_1(x)=\ldots=F_{m-n+1}(x)=\d_2(x)=\ldots=\d_{k+1}(x)=0\};
\end{array}}\end{equation*} for an open neighborhood $\mathcal{U}\subset M$, with $p\in\mathcal{U}$. If $p$ is a zero of the restriction $\x_{|_{\S^{k}(\o)}}$ then $\x(p)\in N_p^{\ast}\S^{k}(\o)=\langle dF_1(p),\ldots,dF_{m-n+1}(p),d\d_2(p),\ldots,d\d_k(p)\rangle.$ In particular, 
$\x(p)\in N_p^{\ast}\S^{k+1}(\o)$, therefore $p$ is a zero of $\x_{|_{\S^{k+1}(\o)}}$.

On the other hand, if $p$ is a zero of $\x_{|_{\S^{k+1}(\o)}}$ then $\x(p)\in N_p^{\ast}\S^{k+1}(\o)\cap\langle\bar{\o}(p)\rangle$. 

Since $p\in A_{k+1}(\o)$, we have that $p\in\S_{k+1}(\o)\setminus\S_{k+2}(\o)$, thus
\begin{equation*}\left\{\begin{array}{l}
\dim(\langle\bar{\o}(p)\rangle\cap N_p^{\ast}\S^{k}(\o))=k;\\
\dim(\langle\bar{\z}^{k+1}(p)\rangle\cap N_p^{\ast}\S^{k+1}(\o))=0; 
\end{array}\right.
\end{equation*} where $\bar{\z}^{k+1}(p)$ represents a smooth basis for a vector subspace supplementary to $\langle\bar{\o}(p)\rangle\cap N_p^{\ast}\S^{k}(\o)$ in $\langle\bar{\o}(p)\rangle$. Since $\dim(N_p^{\ast}\S^{k}(\o))=m-n+k$, $\dim(N_p^{\ast}\S^{k+1}(\o))=m-n+k+1$ and $N_p^{\ast}\S^{k}(\o)\subset N_p^{\ast}\S^{k+1}(\o)$, we have $$\dim(\langle\bar{\o}(p)\rangle\cap N_p^{\ast}\S^{k+1}(\o))=\dim(\langle\bar{\o}(p)\rangle\cap N_p^{\ast}\S^{k}(\o))=k.$$ Thus, $\langle\bar{\o}(p)\rangle\cap N_p^{\ast}\S^{k}(\o)=\langle\bar{\o}(p)\rangle\cap N_p^{\ast}\S^{k+1}(\o).$ Therefore, $\x(p)\in N_p^{\ast}\S^k(\o)$, that is, $p$ is a zero of $\x_{|_{\S^{k}(\o)}}$. \end{proof}

\begin{lema}\label{ptosansaozeros} If $p\in A_n(\o)$ then $p$ is a zero of the restriction $\x_{|_{\S^{n-1}(\o)}}$.
\end{lema}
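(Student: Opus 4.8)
The plan is to reduce the statement to the dimension count established in Lemma \ref{dimensaointersecao} together with the observation that $A_n(\o)$ is exactly the top singular set $\S^n(\o)$. First I would recall that, by Remark \ref{akclosure} and the dimension formula $\dim\S^k(\o)=n-k$ of Lemma \ref{skdimension}, the set $\S^{n+1}(\o)$ is empty (its dimension would be negative), so that $A_n(\o)=\S^n(\o)\setminus\S^{n+1}(\o)=\S^n(\o)$. Thus a point $p\in A_n(\o)$ lies in $\S^n(\o)\subset\S^{n-1}(\o)$, and it suffices to show that $\x(p)$ annihilates $T_p\S^{n-1}(\o)$, i.e. that $\x(p)\in N_p^{\ast}\S^{n-1}(\o)$.

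The key step is to apply Lemma \ref{dimensaointersecao} with $k=n$ to the point $p\in\S^n(\o)$, which gives
\begin{equation*}
\dim\left(\langle\o_1(p),\ldots,\o_n(p)\rangle\cap N_p^{\ast}\S^{n-1}(\o)\right)=n-1.
\end{equation*}
On the other hand, since $p\in\S^n(\o)\subset\S^1(\o)$ and $\o$ has $\corank 1$, the rank of $(\o_1(p),\ldots,\o_n(p))$ equals $n-1$, so the ambient space $\langle\o_1(p),\ldots,\o_n(p)\rangle$ itself has dimension $n-1$. Since the intersection has the same dimension $n-1$ as the space containing it, I would conclude that
\begin{equation*}
\langle\o_1(p),\ldots,\o_n(p)\rangle\subset N_p^{\ast}\S^{n-1}(\o).
\end{equation*}

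It then remains only to note that $\x(p)=\sum_{i=1}^{n}a_i\o_i(p)$ is by definition an element of $\langle\o_1(p),\ldots,\o_n(p)\rangle$, and hence of $N_p^{\ast}\S^{n-1}(\o)$ by the inclusion above. This means exactly that $\x(p)$ vanishes on $T_p\S^{n-1}(\o)$, that is, $p$ is a zero of $\x_{|_{\S^{n-1}(\o)}}$, which completes the argument. I do not anticipate a genuine obstacle here: the whole proof is essentially a one-line dimension comparison, and the only point requiring a little care is verifying that $A_n(\o)$ coincides with $\S^n(\o)$, so that Lemma \ref{dimensaointersecao} is applicable with $k=n$; once the inclusion $\langle\o_1(p),\ldots,\o_n(p)\rangle\subset N_p^{\ast}\S^{n-1}(\o)$ is in hand, everything else follows formally and independently of the choice of $a\in\R^n\setminus\{\vec{0}\}$.
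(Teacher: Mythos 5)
Your proposal is correct and follows essentially the same route as the paper: identify $A_n(\o)$ with $\S^n(\o)$, use the dimension count $\dim(\langle\o_1(p),\ldots,\o_n(p)\rangle\cap N_p^{\ast}\S^{n-1}(\o))=n-1$ together with $\rank(\o_1(p),\ldots,\o_n(p))=n-1$ to conclude $\langle\o_1(p),\ldots,\o_n(p)\rangle\subset N_p^{\ast}\S^{n-1}(\o)$, and hence $\x(p)\in N_p^{\ast}\S^{n-1}(\o)$. You even supply slightly more detail than the paper (the justification that $\S^{n+1}(\o)=\emptyset$ and the explicit appeal to Lemma \ref{dimensaointersecao}).
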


\begin{proof} Analogously to Lemma \ref{lemazerosrestricoes}, we consider local equations of $\S^{n}(\o)$:
\begin{equation*} 
\mathcal{U}\cap\S^{n}(\o)=\{x\in\mathcal{U} \, | \, F_1(x)=\ldots=F_{m-n+1}(x)=\d_2(x)=\ldots=\d_n(x)=0\},
\end{equation*} with $N_x^{\ast}\S^{n}(\o)=\langle dF_1(x),\ldots,dF_{m-n+1}(x),d\d_2(x),\ldots,d\d_n(x)\rangle.$ Since $A_n(\o)=\S^{n}(\o)$, if $p\in A_n(\o)$ then $$\dim(\langle\bar{\o}(p)\rangle\cap N_p^{\ast}\S^{n-1}(\o))=n-1.$$
Thus, $\langle\bar{\o}(p)\rangle\subset N_p^{\ast}\S^{n-1}(\o)$ and consequently, $\x(p)\in N_p^{\ast}\S^{n-1}(\o)$. Therefore, 
$p$ is a zero of $\x_{|_{\S^{n-1}(\o)}}$. \end{proof}

\begin{remark}\label{obsmatrixM} If $p\in\S^1(\o)$ then $\rank(\o_1(p), \ldots, \o_n(p))=n-1$ and, writing $\o_i=(\o_i^1, \ldots, \o_i^m)$, we can suppose without loss of generality that \begin{equation}\label{matrizli}\M(x)=\left|
{\renewcommand{\arraystretch}{2}
\begin{array}{cccc}
\o_1^1(x)&\o_2^1(x)&\cdots &\o_{n-1}^1(x)\\
\vdots & \vdots &\ddots &\vdots\\
\o_1^{n-1}(x)&\o_2^{n-1}(x)&\cdots &\o_{n-1}^{n-1}(x)
\end{array}}\right|\neq0, 
\end{equation} for all $x$ in an open neighborhood $\mathcal{U}\subset M$ with $p\in\mathcal{U}$. In particular, if $p\in\mathcal{U}$ is a singular point of $\x$ then $a_n\neq0$, otherwise, we would have $a_1=\ldots=a_{n-1}=a_n=0$. We will use this fact in next results.
\end{remark}

\begin{lema}\label{lemaauxiliar} Let $p\in\S^1(\o)$ such that $\M(p)\neq0$. Then $\x(p)=0$ if and only if $\displaystyle\sum_{i=1}^{n}{a_i\o_i^j(p)}=0$, for every $j=1,\ldots,n-1$.
\end{lema}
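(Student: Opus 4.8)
The forward implication is immediate and requires no hypothesis beyond the definition: if $\x(p)=0$ then every component of the 1-form $\x(p)$ vanishes, in particular $\sum_{i=1}^{n}a_i\o_i^j(p)=0$ for $j=1,\ldots,n-1$. So the content of the lemma is the converse, and the plan is to first use the rank condition to express $\x(p)$ as a combination of only the first $n-1$ forms, and then invoke the invertibility of $\M(p)$.

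Since $p\in\S^1(\o)$, we have $\rank(\o_1(p),\ldots,\o_n(p))=n-1$, and since $\M(p)\neq0$ the forms $\o_1(p),\ldots,\o_{n-1}(p)$ are already linearly independent in $T_p^{\ast}M$. Hence they form a basis of the subspace $\langle\o_1(p),\ldots,\o_n(p)\rangle$, and in particular $\o_n(p)$ must be a linear combination of them, say $\o_n(p)=\sum_{i=1}^{n-1}c_i\o_i(p)$ for suitable $c_i\in\R$. Substituting this into $\x(p)=\sum_{i=1}^{n}a_i\o_i(p)$ gives $\x(p)=\sum_{i=1}^{n-1}b_i\o_i(p)$, where $b_i=a_i+a_nc_i$.

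Now I would simply read off the first $n-1$ components. The hypothesis $\sum_{i=1}^{n}a_i\o_i^j(p)=0$ for $j=1,\ldots,n-1$ says precisely that the first $n-1$ components of $\x(p)$ vanish, i.e. $\sum_{i=1}^{n-1}b_i\o_i^j(p)=0$ for each $j=1,\ldots,n-1$. This is a homogeneous linear system whose coefficient matrix is exactly the one appearing in Remark \ref{obsmatrixM}, so it reads $\M(p)\,b=0$ for the column vector $b=(b_1,\ldots,b_{n-1})$. Because $\det\M(p)\neq0$, the only solution is $b=0$, whence $\x(p)=\sum_{i=1}^{n-1}b_i\o_i(p)=0$, which is the desired conclusion.

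There is no genuine obstacle here; the argument is elementary linear algebra. The only step requiring a little care is the reduction of $\x(p)$ to a combination of the first $n-1$ forms, since that is where the two hypotheses are used in tandem: membership $p\in\S^1(\o)$ forces the total rank to be $n-1$, while $\M(p)\neq0$ guarantees that the first $n-1$ forms are already independent, so that $\o_n(p)$ necessarily falls in their span. Once $\x(p)$ lies in $\langle\o_1(p),\ldots,\o_{n-1}(p)\rangle$, the nonvanishing of the determinant $\M(p)$ finishes the proof at once.
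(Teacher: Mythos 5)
Your proof is correct and fills in, with complete detail, exactly the linear-algebra argument that the paper dismisses in one line (``It follows easily from the definition of $\S^1(\o)$ and $\x$''): the key reduction of $\x(p)$ to a combination of $\o_1(p),\ldots,\o_{n-1}(p)$ via the rank condition, followed by the invertibility of $\M(p)$, is the intended route. No issues.
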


\begin{proof}
It follows easily from the definition of $\S^1(\o)$ and $\x$.
\end{proof}

\begin{lema}\label{lemainterzeroscomsigma2} Let $Z(\x)$ be the zero set of the 1-form $\x$. Then for almost every $a\in\R^n\setminus\{\vec{0}\}$, $Z(\x)\cap\S^2(\o)=\emptyset$.
\end{lema}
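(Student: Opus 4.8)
The plan is to realize the set of ``bad'' parameters $a$---those for which $Z(\x)\cap\S^2(\o)\neq\emptyset$---as the image of a smooth map whose domain has dimension $n-1$, and then to conclude by Sard's theorem that this image is a null set in $\R^n$. (If $\S^2(\o)=\emptyset$ the statement is vacuous, so I assume $\S^2(\o)\neq\emptyset$.)

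First I would record the key dimension input. By Lemma \ref{zeroszsobresigma1} every zero of $\x$ lies in $\S^1(\o)$, and $\S^2(\o)\subset\S^1(\o)$; hence at any $p\in\S^2(\o)$ one has $\rank(\o_1(p),\ldots,\o_n(p))=n-1$. Consequently the linear evaluation map $\R^n\to T_p^{\ast}M$ sending $a\mapsto\sum_{i=1}^{n}a_i\o_i(p)$ has kernel $\ell_p$ of dimension exactly $n-(n-1)=1$. Thus $p$ is a zero of $\x$ if and only if $a\in\ell_p$, and the bad set is precisely $\bigcup_{p\in\S^2(\o)}\ell_p$.

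Next I would set up the smooth parametrization. Since the rank is constantly $n-1$ along $\S^1(\o)$, the line $\ell_p$ depends smoothly on $p\in\S^2(\o)$. Explicitly, around each point of $\S^2(\o)$ I may, as in Remark \ref{obsmatrixM}, reorder indices so that the minor $\M(p)\neq0$; then by Lemma \ref{lemaauxiliar}, the condition $\x(p)=0$ amounts to the $n-1$ linear equations $\sum_{i=1}^{n}a_i\o_i^{j}(p)=0$, $j=1,\ldots,n-1$, and since Remark \ref{obsmatrixM} forces $a_n\neq0$, Cramer's rule expresses $\ell_p$ as the span of a vector $(a_1(p),\ldots,a_{n-1}(p),1)$ with smooth entries. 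Covering the second-countable manifold $\S^2(\o)$ by countably many such neighborhoods $\mathcal{U}_\alpha$, on each I obtain a smooth map
\begin{equation*}
\Phi_\alpha:(\mathcal{U}_\alpha\cap\S^2(\o))\times\R\longrightarrow\R^n,\qquad
\Phi_\alpha(p,t)=t\,(a_1(p),\ldots,a_{n-1}(p),1),
\end{equation*}
whose image is $\bigcup_{p\in\mathcal{U}_\alpha\cap\S^2(\o)}\ell_p$, so that the bad set is contained in $\bigcup_\alpha\Phi_\alpha\big((\mathcal{U}_\alpha\cap\S^2(\o))\times\R\big)$.

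Finally, since $\dim\S^2(\o)=n-2$ by Lemma \ref{skdimension}, the domain of each $\Phi_\alpha$ has dimension $(n-2)+1=n-1<n$. Hence, by Sard's theorem (equivalently, because a smooth map from a manifold of dimension $<n$ into $\R^n$ has image of Lebesgue measure zero), each $\Phi_\alpha$ has null image, and so does the countable union containing the bad set. Therefore for almost every $a\in\R^n\setminus\{\vec{0}\}$ one has $Z(\x)\cap\S^2(\o)=\emptyset$. The only delicate point is the smooth dependence of the fibers $\ell_p$ on $p$ and the dimension bookkeeping of the parametrization; once the constant-rank fact pins each $\ell_p$ to be one-dimensional, the measure-zero conclusion follows at once from the strict inequality $n-1<n$.
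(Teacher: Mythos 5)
Your proof is correct. It rests on the same dimension count that drives the paper's argument --- the incidence set of pairs $(p,a)$ with $p\in\S^2(\o)$ and $\x(p)=0$ has dimension $n-1<n$, so its shadow in the $a$-space is null --- but you package that count differently. The paper realizes the incidence set as the zero locus $F^{-1}(\vec{0})$ of the explicit map $F(x,a)=(F_1(x),\ldots,F_{m-n+1}(x),\d_2(x),\sum_i a_i\o_i^1(x),\ldots,\sum_i a_i\o_i^{n-1}(x))$, verifies by a Jacobian computation (using both $\rank(dF_1,\ldots,dF_{m-n+1},d\d_2)=m-n+2$ and the nonvanishing minor $\M$) that $\vec{0}$ is a regular value, and then applies Sard's theorem to the projection $\pi(x,a)=a$. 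You instead solve the linear system for $a$: since $\rank(\o_1(p),\ldots,\o_n(p))$ is constantly $n-1$ on $\S^1(\o)$, the fibre $\ell_p$ of bad parameters over each $p\in\S^2(\o)$ is a line depending smoothly on $p$ (Cramer's rule, with $a_n$ normalized to $1$ as in Remark \ref{obsmatrixM}), so the bad set is the image of countably many smooth maps from $(n-1)$-dimensional manifolds and hence has measure zero. Your route is slightly more economical: you never invoke the transversality of the local equations of $\S^2(\o)$, only $\dim\S^2(\o)=n-2$ from Lemma \ref{skdimension} together with Lemma \ref{lemaauxiliar}, because the incidence set arrives already parametrized and no regular-value verification is needed. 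The paper's route buys uniformity: the same ``build $F(x,a)$, check the Jacobian has full rank, project'' template is reused almost verbatim in Lemma \ref{ptscrticrestasigmak} and throughout Section \ref{s3}, where the fibre over a point $p$ is no longer a single line and a direct parametrization would be less convenient.
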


\begin{proof} Let $\mathcal{U}\subset M$ be an open neighborhood on which $\M(x)\neq0$ and
$$\mathcal{U}\cap\S^2(V)=\{x\in\mathcal{U} \, | \, F_1(x)=\ldots=F_{m-n+1}(x)=\d_2(x)=0\},$$ with $\rank(dF_1(x),\ldots, dF_{m-n+1}(x),d\d_2(x))=m-n+2,$ for each $x\in\S^2(V)\cap\mathcal{U}$. Let us consider $F:\mathcal{U}\times\R^n\setminus\{\vec{0}\}\rightarrow\R^{m+1}$ the mapping defined by  $$F(x,a)=(F_1(x),\ldots,F_{m-n+1}(x),\d_2(x),\displaystyle\sum_{i=1}^{n}{a_i\o_i^1(x)},\ldots,\displaystyle\sum_{i=1}^{n}{a_i\o_i^{n-1}(x)}).$$ By Lemma \ref{lemaauxiliar}, if $x\in\S^1(\o)$ then $$\displaystyle\sum_{i=1}^{n}{a_i\o_i(x)}=0\Leftrightarrow\displaystyle\sum_{i=1}^{n}{a_i\o_i^j(x)}=0, \forall j=1,\ldots,n-1.$$ Thus, if $(x,a)\in F^{-1}(\vec{0})$ we have that $x\in Z(\x)\cap\S^2(V)$. Furthermore, the Jacobian matrix of $F$ at a point $(x,a)\in F^{-1}(\vec{0})$:
\begin{equation*}\left[\setlength{\arraycolsep}{0.12cm}{\begin{array}{cccccc}
			 dF_1(x) & \vdots & &&& \\
			 \vdots          & \vdots  &&\multicolumn{2}{c}{\multirow{2}{*}{$O_{(m-n+2)\times n}$}}& \\
 			  dF_{m-n+1}(x) & \vdots  &&&& \\
        d\d_2(x) & \vdots  &&&& \\
       \cdots \ \cdots \ \cdots \ \cdots    & \vdots & \cdots \ \, \cdots \ \, \cdots & \cdots  &\cdots \ \, \cdots \ \, \cdots& \cdots \ \, \cdots \\
                        & \vdots & \o_1^1(x) & \cdots & \o_{n-1}^1(x) & \o_n^1(x) \\
       \multirow{2}{*}{$(*)$}      & \vdots & \o_1^2(x) & \cdots & \o_{n-1}^2(x) & \o_n^2(x) \\
                        & \vdots & \vdots   & \ddots & \vdots       & \vdots \\
                        & \vdots & \o_1^{n-1}(x) & \cdots & \o_{n-1}^{n-1}(x) & \o_n^{n-1}(x) \\   
\end{array}}\right]
\end{equation*} has $\rank$ $m+1$. That is, $\vec{0}$ is regular value of $F$ and $F^{-1}(\vec{0})$ is a submanifold of dimension $n-1$. Let $\pi:F^{-1}(\vec{0})\rightarrow\R^n\setminus\{\vec{0}\}$ be the projection over $\R^n\setminus\{\vec{0}\}$ given by $\pi(x,a)=a$, by Sard's Theorem, $a$ is regular value of $\pi$ for almost every $a\in\R^n\setminus\{\vec{0}\}$. Therefore, $\pi^{-1}(a)\cap F^{-1}(\vec{0})=\emptyset$ for almost every $a\in\R^n\setminus\{\vec{0}\}$. However, $
\pi^{-1}(a)\cap F^{-1}(\vec{0})=\{(x,a)\in\mathcal{U}\times\{a\}: x\in Z(\x)\cap\S^2(\o)\}.$ Thus, $Z(\x)\cap\S^2(\o)=\emptyset$ for almost every $a\in\R^n\setminus\{\vec{0}\}$. \end{proof}

\begin{lema}\label{ptscrticrestasigmak} Let $Z(\x_{|_{\S^k(\o)}})$ be the zero set of the restriction of the 1-form $\x$ to $\S^k(\o)$, with $k\geq1$. Then for almost every $a\in\R^n\setminus\{\vec{0}\}$, $Z(\x_{|_{\S^k(\o)}})\cap\S^{k+2}(\o)=\emptyset$.
\end{lema}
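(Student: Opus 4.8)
The plan is to mimic the parametric–transversality argument of Lemma~\ref{lemainterzeroscomsigma2} (which is essentially the $\S^2(\o)$ case), now encoding simultaneously that a point lies in $\S^{k+2}(\o)$ and that the restriction $\x_{|_{\S^k(\o)}}$ vanishes there. Fix $p\in\S^{k+2}(\o)$ and work in a chart $\mathcal{U}$ around $p$ in which, by Lemma~\ref{eqlocalSk} and Remark~\ref{akclosure}, the nested strata admit local equations
\[
\mathcal{U}\cap\S^{j}(\o)=\{x\in\mathcal{U}\mid F_1=\cdots=F_{m-n+1}=\d_2=\cdots=\d_{j}=0\},\qquad j=k,k+1,k+2,
\]
with $dF_1,\ldots,dF_{m-n+1},d\d_2,\ldots,d\d_{k+2}$ linearly independent on $\S^{k+2}(\o)\cap\mathcal{U}$ and $N_x^{\ast}\S^k(\o)=\langle dF_1,\ldots,dF_{m-n+1},d\d_2,\ldots,d\d_k\rangle$. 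Shrinking $\mathcal{U}$, and since $\S^{k+2}(\o)\subset\S^1(\o)$, I may also assume the $(n-1)\times(n-1)$ minor $\M(x)\neq0$ of Remark~\ref{obsmatrixM}.

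The next step is to turn the condition ``$x\in\S^{k+2}(\o)$ and $\x(x)\in N_x^{\ast}\S^k(\o)$'' into the zero set of a map $F:\mathcal{U}\times(\R^n\setminus\{\vec 0\})\to\R^{m+1}$. The first $m-n+k+2$ coordinates of $F$ are the defining functions $F_1,\ldots,F_{m-n+1},\d_2,\ldots,\d_{k+2}$ of $\S^{k+2}(\o)$. For the remaining block I record that $\x(x)\in N_x^{\ast}\S^k(\o)$: decomposing $\x(x)=\sum a_i\o_i(x)$ along $\langle\bar{\o}(x)\rangle=(\langle\bar{\o}(x)\rangle\cap N_x^{\ast}\S^{k-1}(\o))\oplus\langle\z_1(x),\ldots,\z_{n-k}(x)\rangle$, the first summand already lies in $N_x^{\ast}\S^k(\o)$ (as $N_x^{\ast}\S^{k-1}(\o)\subset N_x^{\ast}\S^k(\o)$), so by Lemma~\ref{dimensaointersecao} the condition reduces to requiring the $\langle\z\rangle$-component of $\x(x)$ to lie in the line $\langle\z_1,\ldots,\z_{n-k}\rangle\cap N_x^{\ast}\S^k(\o)$, which is one-dimensional on $\S^{k+1}(\o)$ by Definition~\ref{defiSk}. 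This is exactly $n-k-1$ scalar conditions, each \emph{linear in} $a$; I take them as the last $n-k-1$ coordinates of $F$, so that $F$ has $m+1$ components and $F^{-1}(\vec 0)=\{(x,a)\mid x\in Z(\x_{|_{\S^k(\o)}})\cap\S^{k+2}(\o)\cap\mathcal{U}\}$.

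The heart of the argument is to show $\vec 0$ is a regular value of $F$. At a point $(x,a)\in F^{-1}(\vec 0)$ the Jacobian is block-triangular: the $\S^{k+2}(\o)$-rows have vanishing $a$-derivatives and $x$-derivatives of rank $m-n+k+2$, while the last $n-k-1$ rows, being linear in $a$, have an $a$-derivative block of rank exactly $n-k-1$. The latter is where the $\corank 1$ hypothesis enters, just as in Lemma~\ref{lemaauxiliar} and Remark~\ref{obsmatrixM}: because $\rank(\o_1(x),\ldots,\o_n(x))=n-1$ with $\M(x)\neq0$, the linear map $a\mapsto\x(x)$ is onto $\langle\bar{\o}(x)\rangle$, hence, after projecting to $\langle\z_1,\ldots,\z_{n-k}\rangle$ and passing to the quotient by the distinguished line, realizes all $n-k-1$ directions. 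Consequently $\rank DF=(m-n+k+2)+(n-k-1)=m+1$, and $F^{-1}(\vec 0)$ is a submanifold of dimension $(m+n)-(m+1)=n-1$. I then conclude exactly as in Lemma~\ref{lemainterzeroscomsigma2}: for the projection $\pi(x,a)=a$ onto $\R^n\setminus\{\vec 0\}$, almost every $a$ is a regular value by Sard's theorem, and since $\dim F^{-1}(\vec 0)=n-1<n$ a regular value cannot lie in the image, so $\pi^{-1}(a)=\emptyset$; covering $\S^{k+2}(\o)$ by countably many such charts and intersecting the corresponding full-measure sets gives the claim.

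I expect the main obstacle to be the middle step: producing the $n-k-1$ functions cutting out $\x(x)\in N_x^{\ast}\S^k(\o)$ as \emph{smooth, globally defined} functions on $\mathcal{U}\times(\R^n\setminus\{\vec 0\})$ (rather than only along $\S^{k+2}(\o)$), and verifying that their $a$-Jacobian has rank precisely $n-k-1$ — neither more, which would wrongly force $\dim F^{-1}(\vec 0)=n-2$, nor less. This is exactly the point where the dimension count of Lemma~\ref{dimensaointersecao} (so that $\x$ is already constrained to the $(n-1)$-dimensional $\langle\bar{\o}(x)\rangle$ and the intersection with $N_x^{\ast}\S^k(\o)$ is $k$-dimensional on $\S^{k+1}(\o)$) must be combined with the rank-$(n-1)$ condition on $\o$; handling the smooth dependence of the distinguished line $\langle\z\rangle\cap N_x^{\ast}\S^k(\o)$ over $\mathcal{U}$ is the technical heart of that verification.
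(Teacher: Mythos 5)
Your overall strategy --- build an incidence set in $\mathcal{U}\times(\R^n\setminus\{\vec 0\})$ recording simultaneously ``$x\in\S^{k+2}(\o)$'' and ``$\x_{|_{\S^k(\o)}}(x)=0$'', show it has dimension at most $n-1$, and project to the $a$-coordinate via Sard --- is exactly the paper's. The genuine difference, and the gap, lies in the middle block of your map $F$. You propose to encode $\x(x)\in N_x^{\ast}\S^k(\o)$ by $n-k-1$ scalar functions obtained by projecting the $\langle\z\rangle$-component of $\x(x)$ to the quotient by the line $\langle\z_1(x),\ldots,\z_{n-k}(x)\rangle\cap N_x^{\ast}\S^k(\o)$. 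But that intersection is one-dimensional only on $\S^{k+1}(\o)$ (on $A_k(\o)$ it is $\{0\}$ by Definition \ref{defiSk}), and the direct-sum decomposition of $\langle\bar{\o}(x)\rangle$ you use is only available on $\S^{k}(\o)$, not on an open subset of $M$. So the last $n-k-1$ components of $F$ are not smooth functions on $\mathcal{U}\times(\R^n\setminus\{\vec 0\})$ as written, and the regular-value computation cannot be carried out on that open set. You flag this yourself as the ``main obstacle,'' but it is precisely the content of the proof rather than a routine verification: the rest of your outline (the block-triangular rank count, the surjectivity of $a\mapsto\x(x)$ onto $\langle\bar{\o}(x)\rangle$ coming from $\corank 1$, the Sard step, the countable cover of charts) is correct \emph{conditional} on having those functions.

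The paper sidesteps the issue with Szafraniec's characterization: it introduces Lagrange-multiplier variables $\lambda\in\R^{m-n+k}$ and replaces your $n-k-1$ quotient conditions by the $m$ manifestly smooth equations $N_s(x,a,\lambda)=\x_s(x)-\sum_{j}\lambda_j\,\frac{\partial F_j}{\partial x_s}(x)=0$ on $\mathcal{U}\times(\R^n\setminus\{\vec 0\})\times\R^{m-n+k}$. The price is that the resulting Jacobian is not of full rank (it has rank $2m-n+k+1$ rather than $2m-n+k+2$, because $\dim(\langle\bar{\o}(x)\rangle+N_x^{\ast}\S^k(\o))=m-1$ by Lemma \ref{dimensaointersecao}), but this rank bound still forces the incidence set to have dimension at most $n-1$, which is all the projection argument needs. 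To salvage your more economical $(m+1)$-equation version you would have to exhibit the $n-k-1$ functions explicitly --- for instance as a suitable choice of $(m-n+k+1)\times(m-n+k+1)$ minors of the matrix $(dF_1,\ldots,dF_{m-n+k},\x)$ --- and verify their smoothness and the claimed $a$-rank near $\S^{k+2}(\o)$; the multiplier formulation is the standard device for avoiding exactly that choice.
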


\begin{proof} {For each $k=1, \ldots, n-2$}, let $\mathcal{U}\subset M$ be an open neighborhood on which, 
$$\mathcal{U}\cap\S^{k}(\o)=\{x\in\mathcal{U} \, | \, F_1(x)=\ldots=F_{m-n+k}(x)=0\},$$ with $\rank(dF_1(x),\ldots, dF_{m-n+k}(x))=m-n+k$, for all $x\in\mathcal{U}\cap\S^{k}(\o)$ {and $$\mathcal{U}\cap\S^{k+2}(V)=\{x\in\mathcal{U} \, | \, F_1(x)=\ldots=F_{m-n+k+2}(x)=0\},$$ with $\rank(dF_1(x),\ldots, dF_{m-n+k+2}(x))=m-n+k+2$, for all $x\in\mathcal{U}\cap\S^{k+2}(V)$.}

By Szafraniec's characterization (see \cite[p. 196]{sza2}) adapted to 1-forms, $x$ is a zero of the restriction $\x_{|_{\S^k(\o)}}$ if and only if there exists $(\lambda_1,\ldots,\lambda_{m-n+k})\in\R^{m-n+k}$ such that $$\x(x)=\displaystyle\sum_{j=1}^{m-n+k}{\lambda_jdF_j(x)}.$$ 
Let us write $\x(x)=(\x_1(x),\ldots,\x_m(x))$, where $\x_s(x)=\displaystyle\sum_{i=1}^{n}{a_i\o_i^s(x)}$, $s= 1,\ldots, m$, we define
 $$N_s(x,a,\lambda):=\x_s(x)-\displaystyle\sum_{j=1}^{m-n+k}{\lambda_j\ffrac{\partial F_j}{\partial x_s}(x)},$$ so that $\x_{|_{\S^k(\o)}}(x)=0$ if and only if $N_s(x,a,\lambda)=0$, for all $s=1,\ldots,m$. 
 
Let $F:\mathcal{U}\times\R^n\setminus\{\vec{0}\}\times\R^{m-n+k}\rightarrow\R^{2m-n+k+2}$ be the mapping defined by $$F(x,a,\lambda)=(F_1,\ldots, F_{m-n+k+2}, N_1,\ldots,N_m),$$ if $(x,a,\lambda)\in F^{-1}(\vec{0})$ then $x\in Z(\x_{|_{\S^k(\o)}})\cap\S^{k+2}(\o)$ and the Jacobian matrix of $F$ at $(x,a,\lambda)$: \begin{equation*}\left[\setlength{\arraycolsep}{0.12cm}{\renewcommand{\arraystretch}{1}{\begin{array}{ccccc}
			  dF_1(x) & \vdots & && \\
			  \vdots          & \vdots &\multicolumn{3}{c}{O_{(m-n+k+2)\times(m+k)}} \\
 			  dF_{m-n+k+2}(x) & \vdots & && \\
\cdots \ \, \cdots \ \, \cdots \ \, \cdots \ \, \cdots   & \vdots &\cdots \  \, \cdots \  \, \cdots \  \, \cdots & \cdots & \cdots \  \, \cdots \  \, \cdots \  \, \cdots\\
d_xN_1(x,a,\lambda)  & \vdots  & & \vdots &  \\
\vdots       & \vdots  & B_{m\times n} &  \vdots & C_{m\times(m-n+k)}\\
d_xN_m(x,a,\lambda)  & \vdots   & &   \vdots    &    \\
\end{array}}}\right]
\end{equation*} has $\rank$ $2m-n+k+1$, where $O_{(m-n+k+2)\times(m+k)}$ is a null matrix, $B_{m\times n}$ is a matrix whose columns vectors are given by the coefficients of the 1-forms $\o_1(x), \ldots, \o_n(x)$ of the $n$-coframe $\o$:
\begin{equation*}B_{m\times n}=\left[\begin{array}{ccc}
 \o_1^1(x) & \cdots &  \o_n^1(x) \\
 \vdots   & \ddots    & \vdots \\
 \o_1^{m}(x) & \cdots &  \o_n^{m}(x) \\    
\end{array}\right]
\end{equation*} and $C_{m\times(m-n+k)}$ is the matrix whose columns vectors are, up to sign, the coefficients of the derivatives $dF_1,\ldots,dF_{m-n+k}$ with respect to $x$:
\begin{equation*}C_{m\times(m-n+k)}=\left[\setlength{\arraycolsep}{0.12cm}{\begin{array}{ccc}
-\ffrac{\partial F_1}{\partial x_1}(x) & \cdots & -\ffrac{\partial F_{m-n+k}}{\partial x_1}(x) \\
 \vdots   & \ddots & \vdots \\       
-\ffrac{\partial F_{1}}{\partial x_m}(x) & \cdots & -\ffrac{\partial F_{m-n+k}}{\partial x_m}(x) \\ 
\end{array}}\right].
\end{equation*}

Note that, if $(x,a,\lambda)\in F^{-1}(\vec{0})$ then, in particular, $x\in\S^{k+1}(\o)$ and by Lemma \ref{dimensaointersecao}, $\dim(\langle\bar{\o}(x)\rangle\cap N_x^{\ast}\S^k(\o))=k$. Thus, $\dim(\langle\bar{\o}(x)\rangle + N_x^{\ast}\S^k(\o))=m-1$.
Therefore, 
\begin{equation*}\rank\left[\begin{array}{ccc}
               B_{m\times n} &  \vdots & C_{m\times(m-n+k)}  \\
\end{array}\right]=m-1
\end{equation*} and the Jacobian matrix of $F$ at $(x,a,\lambda)$ has rank $2m-n+k+1$. That is, $F^{-1}(\vec{0})$ has dimension less or equal to $n-1$. Let $\pi:F^{-1}(\vec{0})\rightarrow\R^n\setminus\{\vec{0}\}$ be the projection over $\R^n\setminus\{\vec{0}\}$, that is, $\pi(x,a,\lambda)=a.$ By Sard's Theorem, $a$ is regular value of $\pi$ for almost every $a\in\R^n\setminus\{\vec{0}\}$. Therefore, $\pi^{-1}(a)\cap F^{-1}(\vec{0})=\emptyset$ for almost every $a\in\R^n\setminus\{\vec{0}\}$. 
However, \begin{equation*}
\pi^{-1}(a)\cap F^{-1}(\vec{0})=\{(x,a,\lambda)\in\mathcal{U}\times\{a\}\times\R^{m-n+k}\, | \, x\in Z(\x_{|_{\S^k(\o)}})\cap\S^{k+2}(\o)\}.
\end{equation*} Thus, $Z(\x_{|_{\S^k(\o)}})\cap\S^{k+2}(\o)=\emptyset$ for almost every $a\in\R^n\setminus\{\vec{0}\}$.
\end{proof}

\section{Non-degenerate zeros of a generic 1-form $\x(x)$ associated to a Morin $n$-coframe}\label{s3}


In this section we will verify that, generically, the 1-form $\x(x)$ and its restrictions $\x_{|_{\S^k(\o)}}$, $\x_{|_{A_k(\o)}}$ admit only non-degenerate zeros. Furthermore, we will see how these non-degenerate zeros 
can be related.
We start with some technical lemmas.

\begin{lema}\label{lematecnico} Let $A$ be a square matrix of order m given by: $$A=\left[\begin{array}{ccc}
a_{11} & \cdots & a_{1m}\\
a_{21} & \cdots & a_{2m}\\
\vdots & \cdots & \vdots\\
a_{m1} & \cdots & a_{mm}
\end{array}\right]. $$ If there exist $(\lambda_1, \ldots, \lambda_m)\in\R^m\setminus\{\vec{0}\}$ such that $\displaystyle\sum_{j=1}^{m}{\lambda_ja_{ij}}=0$, $i=1, \ldots, m,$ then $$\lambda_j\cof(a_{ik})-\lambda_k\cof(a_{ij})=0, \,\forall j,k=1,\ldots, m.$$
\end{lema}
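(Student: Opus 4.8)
The plan is to read the hypothesis as a kernel condition on $A$ and then combine the classical Laplace (cofactor) expansion with a dichotomy on $\rank A$. The desired identity says precisely that, for each fixed $i$, the cofactor vector $(\cof(a_{i1}),\ldots,\cof(a_{im}))$ is proportional to $\lambda=(\lambda_1,\ldots,\lambda_m)$, so the whole argument reduces to locating both vectors inside $\Ker A$.

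First I would note that the assumption $\sum_{j=1}^{m}\lambda_j a_{ij}=0$ for $i=1,\ldots,m$ is exactly the statement that $A\lambda=\vec{0}$, with $\lambda$ regarded as a column vector. Since $\lambda\neq\vec{0}$, this forces $A$ to be singular, so $\det A=0$, and in fact $\lambda\in\Ker A$, whence $\rank A\leq m-1$. The key algebraic input is then the Laplace expansion read across distinct rows: for all indices $i,q$,
$$\sum_{p=1}^{m}a_{ip}\,\cof(a_{qp})=\delta_{iq}\det A,$$
where for $i\neq q$ the left-hand side is the determinant of the matrix obtained from $A$ by replacing its $q$-th row with its $i$-th row (two equal rows, hence $0$), and for $i=q$ it is the ordinary expansion along row $q$. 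Because $\det A=0$, the right-hand side vanishes for every $i,q$; reading this for fixed $q$ shows that the cofactor vector $c^{(q)}:=(\cof(a_{q1}),\ldots,\cof(a_{qm}))$ satisfies $\sum_p a_{ip}c^{(q)}_p=0$ for all $i$, i.e. $c^{(q)}\in\Ker A$.

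I would then split into two cases according to the rank of $A$. If $\rank A=m-1$, then $\Ker A$ is one-dimensional and contains the nonzero vector $\lambda$, so $\Ker A=\langle\lambda\rangle$; hence each $c^{(q)}$ is a scalar multiple of $\lambda$, say $\cof(a_{qp})=\mu_q\lambda_p$ for some $\mu_q\in\R$. Substituting this into the target expression gives $\lambda_j\cof(a_{ik})-\lambda_k\cof(a_{ij})=\mu_i\lambda_j\lambda_k-\mu_i\lambda_k\lambda_j=0$, as required. If instead $\rank A\leq m-2$, then every $(m-1)\times(m-1)$ minor of $A$ vanishes, so all cofactors $\cof(a_{ij})$ are zero and the identity holds trivially.

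The only genuine subtlety, and the step I would watch most carefully, is this case distinction: the proportionality $c^{(q)}=\mu_q\lambda$ is available only when $\Ker A$ is exactly one-dimensional, so one must dispose separately of the degenerate situation $\rank A\leq m-2$, where it is the simultaneous vanishing of all cofactors rather than any proportionality that settles the claim.
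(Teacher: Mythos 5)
Your proof is correct and complete. The paper states this technical lemma without proof, so there is nothing to compare against; your route --- reading the hypothesis as $A\lambda=\vec{0}$, hence $\det A=0$, using the identity $\sum_{p}a_{ip}\cof(a_{qp})=\delta_{iq}\det A$ to place every cofactor row in $\Ker A$, and then splitting on whether $\rank A=m-1$ (one-dimensional kernel forces each cofactor row to be proportional to $\lambda$, which is exactly the claimed identity) or $\rank A\le m-2$ (all cofactors vanish) --- is the natural argument, and the case distinction you flag is precisely the point a hasty proof would overlook.
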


\begin{lema}\label{lemadoscofatores} Let us consider the matrix 
\begin{equation*}M_i(x)=\left[\renewcommand{\arraystretch}{2}{\begin{array}{ccccc}
 \o_1^1(x) & \cdots & \o_{n-1}^1(x) & \o_n^1(x) \\
 \vdots   & \ddots & \vdots       & \vdots \\
 \o_1^{n-1}(x) & \cdots & \o_{n-1}^{n-1}(x) & \o_n^{n-1}(x) \\   
 \o_1^{i}(x) & \cdots & \o_{n-1}^{i}(x) & \o_n^{i}(x) \\  
\end{array}}\right].
\end{equation*} If $x$ is a zero of $\x$ then for $\l\in\{1, \ldots,n-1\}$, $j\in\{1, \ldots, n-1,i\}$ and $i\in\{n, \ldots, m\}$, we have $$a_n\cof(\o_{\l}^j, M_i)=a_{\l}\cof(\o_n^j, M_i).$$
\end{lema}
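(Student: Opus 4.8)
The plan is to deduce this from the purely linear-algebraic Lemma \ref{lematecnico}, applied to the matrix $M_i(x)$ with the coefficient vector $a=(a_1,\ldots,a_n)$ playing the role of $(\lambda_1,\ldots,\lambda_m)$ in that lemma. First I would observe that $M_i(x)$ is a square matrix of order $n$: its $n$ columns are the restrictions of $\o_1(x),\ldots,\o_n(x)$ to the components indexed by $1,\ldots,n-1,i$, and its $n$ rows are indexed by these same components. Thus, indexing rows directly by the component label $r\in\{1,\ldots,n-1,i\}$, the entry of $M_i(x)$ in row $r$ and column $c$ is $\o_c^r(x)$.

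Next I would use the hypothesis that $x$ is a zero of $\x$. Since $\x(x)=\sum_{c=1}^{n}a_c\o_c(x)=0$, every component of $\x(x)$ vanishes; in particular $\sum_{c=1}^{n}a_c\o_c^r(x)=0$ for each $r\in\{1,\ldots,n-1,i\}$. These are exactly the rows of $M_i(x)$, so $\sum_{c=1}^{n}a_c(M_i)_{rc}=0$ for every row $r$, which is precisely the hypothesis of Lemma \ref{lematecnico} (the columns of $M_i(x)$ are linearly dependent with coefficients $a_c$). Because $a\in\R^n\setminus\{\vec{0}\}$, the vector $\lambda=a$ is nonzero as required, so the lemma applies to $M_i(x)$.

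Finally I would read off the conclusion of Lemma \ref{lematecnico}. Fixing the row $j\in\{1,\ldots,n-1,i\}$ and taking the two columns to be that of $\o_\l$ and that of $\o_n$, the identity $\lambda_{b}\cof(a_{rc})-\lambda_{c}\cof(a_{rb})=0$ specializes, with the coefficient subscript $n$ paired to the cofactor of the column of $\o_\l$ and the coefficient subscript $\l$ paired to the cofactor of the column of $\o_n$, to $a_n\cof(\o_\l^j,M_i)=a_\l\cof(\o_n^j,M_i)$, valid for all $\l\in\{1,\ldots,n-1\}$, $j\in\{1,\ldots,n-1,i\}$ and $i\in\{n,\ldots,m\}$. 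I do not expect a genuine obstacle here, since the analytic content is already packaged in Lemma \ref{lematecnico}; the only delicate point is index bookkeeping, namely verifying that the rows of $M_i(x)$ are exactly the components of $\x(x)$ that vanish, and carefully tracking the swap between the coefficient subscript and the column whose cofactor appears, so that $a_n$ ends up multiplying $\cof(\o_\l^j,M_i)$ and $a_\l$ the term $\cof(\o_n^j,M_i)$ and not the reverse.
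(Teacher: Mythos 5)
Your proposal is correct and follows exactly the paper's own argument: apply Lemma \ref{lematecnico} to the $n\times n$ matrix $M_i(x)$ with $(\lambda_1,\ldots,\lambda_n)=(a_1,\ldots,a_n)$, the column dependence coming from the vanishing of the components of $\x(x)$ indexed by $1,\ldots,n-1,i$. The paper's proof is just a terser version of the same index bookkeeping you carry out.
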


\begin{proof} This result is a consequence of Lemma \ref{lematecnico} applied to the matrix $A=M_i(x)$, where $a_{\l j}=\o_j^{\l}(x)$, for $j=1,\ldots,n$ and $\l=1,\ldots, n-1, i$. It is enough to take $(\lambda_1, \ldots, \lambda_n)=(a_1, \ldots, a_n)$. 
\end{proof}

\begin{lema}\label{lemadorank} Let $\mathcal{U}\subset\R^m$ be an open set and let $H:\mathcal{U}\times\R^n\setminus\{\vec{0}\}\rightarrow\R^m$ be a smooth mapping given by $H(x,a)=(h_1(x,a),\ldots, h_m(x,a))$. If $$\rank(dh_1(x,a),\ldots, dh_m(x,a))=m, \forall (x,a)\in H^{-1}(\vec{0})$$ then $\rank(d_x h_1(x,a),\ldots, d_x h_m(x,a))=m$ for almost every $a\in\R^n\setminus\{\vec{0}\}$. 
\end{lema}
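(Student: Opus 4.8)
The plan is to recognize this as a parametric transversality statement and to prove it via Sard's Theorem applied to a projection, exactly in the spirit of the arguments used in Lemmas \ref{lemainterzeroscomsigma2} and \ref{ptscrticrestasigmak}. The hypothesis that $\rank(dh_1(x,a),\ldots,dh_m(x,a))=m$ for every $(x,a)\in H^{-1}(\vec{0})$ says precisely that $\vec{0}$ is a regular value of $H$, so $H^{-1}(\vec{0})$ is a smooth submanifold of $\mathcal{U}\times(\R^n\setminus\{\vec{0}\})$ of dimension $(m+n)-m=n$. First I would introduce the projection $\pi:H^{-1}(\vec{0})\rightarrow\R^n\setminus\{\vec{0}\}$ given by $\pi(x,a)=a$, and invoke Sard's Theorem to guarantee that almost every $a\in\R^n\setminus\{\vec{0}\}$ is a regular value of $\pi$.

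The crux is then a purely linear-algebraic identification: for a fixed $(x,a)\in H^{-1}(\vec{0})$, the restriction of $d\pi$ to $T_{(x,a)}H^{-1}(\vec{0})=\Ker(dH(x,a))$ is surjective onto $\R^n$ if and only if the partial Jacobian $d_xH(x,a)=(d_xh_1(x,a),\ldots,d_xh_m(x,a))$ has rank $m$. To see this I would split the full differential as $dH=\left[\,d_xH\mid d_aH\,\right]$, with $d_xH$ an $m\times m$ block and $d_aH$ an $m\times n$ block, and observe that $\Ker(dH)$ has dimension $n$. Since $d\pi(\dot{x},\dot{a})=\dot{a}$, its kernel inside $\Ker(dH)$ is $\{(\dot{x},\vec{0}) : d_xH\,\dot{x}=\vec{0}\}\cong\Ker(d_xH)$, which has dimension $m-\rank(d_xH)$. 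By the rank-nullity theorem the image of $d\pi|_{\Ker(dH)}$ has dimension $n-(m-\rank(d_xH))=n-m+\rank(d_xH)$, and this equals $n$ precisely when $\rank(d_xH)=m$.

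Finally I would combine the two steps: if $a$ is a regular value of $\pi$, then for every $(x,a)\in\pi^{-1}(a)$ the map $d\pi$ is surjective on $T_{(x,a)}H^{-1}(\vec{0})$, hence $\rank(d_xh_1(x,a),\ldots,d_xh_m(x,a))=m$ by the equivalence above; here $\pi^{-1}(a)$ is exactly the set of zeros of $H(\cdot,a)$, the case $\pi^{-1}(a)=\emptyset$ being vacuous. Since almost every $a$ is a regular value of $\pi$, the conclusion follows for almost every $a\in\R^n\setminus\{\vec{0}\}$. The only genuinely delicate point is the equivalence in the second paragraph, which is the conceptual heart of the argument; once the dimension count is set up correctly, the remaining steps are routine applications of the regular value theorem and Sard's Theorem.
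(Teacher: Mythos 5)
Your argument is correct: the hypothesis makes $\vec{0}$ a regular value of $H$, the linear-algebra equivalence between surjectivity of $d\pi$ on $\Ker(dH(x,a))$ and $\rank(d_xH(x,a))=m$ is verified correctly by your rank--nullity count, and Sard's Theorem applied to the projection $\pi(x,a)=a$ finishes the proof. The paper states this lemma without proof, but your argument is exactly the parametric transversality/Sard technique the author uses in the surrounding lemmas (e.g.\ Lemmas \ref{lemainterzeroscomsigma2} and \ref{ptscrticrestasigmak}), so it is the intended proof.
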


In the previous section we proved that every zero of $\x$ belongs to $\S^1(\o)$. Next, we will show that, generically, such zeros belong to $A_1(\o)$ and they are non-degenerate. To do this, we must find explicit equations that define locally the manifolds $T^{\ast}M^{n,n-1}$ and $\S^1(\o)$. 

\begin{lema}\label{eqloc2} Let $(p,\tilde{\f})\in T^{\ast}M^{n,n-1}$, it is possible to exhibit, explicitly, functions $\m_i(x,\f):\tilde{\mathcal{U}}\rightarrow\R$, $i=n, \ldots, m$, defined on an open neighborhood $\tilde{\mathcal{U}}\subset T^{\ast}M^n$, with $(p,\tilde{\f})\in\tilde{\mathcal{U}}$, such that, locally $$T^{\ast}M^{n,n-1}=\left\{(x,\f)\in \tilde{\mathcal{U}} \ | \ \m_n=\ldots=\m_m=0\right\}$$ with $\rank\left(d\m_n,\ldots,d\m_m\right)=m-n+1$, for all $(x,\f)\in T^{\ast}M^{n,n-1}\cap\tilde{\mathcal{U}}$.
\end{lema}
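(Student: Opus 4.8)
The plan is to cut out $T^{\ast}M^{n,n-1}$ near $(p,\tilde{\f})$ by the vanishing of a carefully chosen family of $n\times n$ minors of the coframe matrix, mimicking the determinant computation of Lemma~\ref{dimNSM}. Since $(p,\tilde{\f})\in T^{\ast}M^{n,n-1}$ we have $\rank(\tilde{\f}_1,\ldots,\tilde{\f}_n)=n-1$, so some $(n-1)\times(n-1)$ minor of the $m\times n$ matrix $(\f_i^j)$ is nonzero; after reordering the forms and the coordinates I may assume it is the upper-left one,
$$\M(x,\f)=\det\bigl(\f_i^j\bigr)_{1\le i,j\le n-1}\neq0,$$
and by continuity $\M$ stays nonzero on an open neighborhood $\tilde{\mathcal{U}}$ of $(p,\tilde{\f})$. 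For $i=n,\ldots,m$ I then set $\m_i(x,\f)=\det M_i(x,\f)$, where $M_i$ is the $n\times n$ matrix formed by the components $1,\ldots,n-1,i$ of all $n$ forms $\f_1,\ldots,\f_n$, exactly the matrix appearing in Lemma~\ref{lemadoscofatores} with $\o$ replaced by the coordinate forms $\f$. This produces the required $m-n+1$ functions.

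First I would check that $\{\m_n=\ldots=\m_m=0\}$ coincides with $T^{\ast}M^{n,n-1}$ on $\tilde{\mathcal{U}}$. On $\tilde{\mathcal{U}}$ the condition $\M\neq0$ forces $\f_1,\ldots,\f_{n-1}$ to be linearly independent, so $\rank(\f_1,\ldots,\f_n)\ge n-1$ and $T^{\ast}M^{n,\le n-2}\cap\tilde{\mathcal{U}}=\emptyset$; hence the rank is either $n-1$ or $n$, and it equals $n-1$ precisely when $\f_n$ lies in the span of $\f_1,\ldots,\f_{n-1}$. Using that the top-left $(n-1)\times(n-1)$ block is invertible, Cramer's rule produces unique coefficients $c_1(x,\f),\ldots,c_{n-1}(x,\f)$ with $\f_n^l=\sum_{j=1}^{n-1}c_j\f_j^l$ for $l=1,\ldots,n-1$; expanding $\m_i=\det M_i$ shows that $\m_i=0$ is equivalent to the very same relation holding in component $i$, namely $\f_n^i=\sum_{j=1}^{n-1}c_j\f_j^i$. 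Thus all the $\m_i$ vanish if and only if $\f_n=\sum_j c_j\f_j$, that is, if and only if $\rank(\f_1,\ldots,\f_n)=n-1$, which gives the asserted local description of $T^{\ast}M^{n,n-1}$.

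It then remains to prove that $\rank(d\m_n,\ldots,d\m_m)=m-n+1$ along $T^{\ast}M^{n,n-1}\cap\tilde{\mathcal{U}}$, and here I would reuse the cofactor-derivative trick of Lemma~\ref{dimNSM}, differentiating with respect to the coordinates $\f_n^n,\f_n^{n+1},\ldots,\f_n^m$ (the entries of the last column in components $n,\ldots,m$). Expanding $\det M_i$ along its last row, the variable $\f_n^i$ sits in the bottom-right corner of $M_i$, so
$$\ffrac{\partial\m_i}{\partial\f_n^i}(x,\f)=\cof(\f_n^i,M_i)=\M\neq0,$$
while for $i'\neq i$ (both in $\{n,\ldots,m\}$) the coordinate $\f_n^i$ does not occur among the entries of $M_{i'}$, since component $i\notin\{1,\ldots,n-1,i'\}$, so $\partial\m_{i'}/\partial\f_n^i=0$. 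Hence the Jacobian of $(\m_n,\ldots,\m_m)$ with respect to $(\f_n^n,\ldots,\f_n^m)$ is diagonal with all diagonal entries equal to $\M\neq0$, therefore invertible, and the differentials $d\m_n,\ldots,d\m_m$ are linearly independent.

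The main point to get right is the equivalence in the second step: one must verify that the vanishing of just these $m-n+1$ minors $\m_i$, rather than of \emph{all} $n\times n$ minors of the coframe matrix, already characterizes rank $n-1$ on $\tilde{\mathcal{U}}$. This is precisely what the invertibility of $\M$ buys, through the Cramer's-rule reduction above; once that is settled the differential computation is routine.
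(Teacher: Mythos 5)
Your proposal is correct and follows essentially the same route as the paper: the same choice of minors $\m_i=\det M_i$ bordering a nonvanishing $(n-1)\times(n-1)$ block, and the same cofactor computation showing that the Jacobian of $(\m_n,\ldots,\m_m)$ with respect to $(\f_n^n,\ldots,\f_n^m)$ is diagonal with entries $\M\neq0$. The only difference is that you spell out, via Cramer's rule, why the vanishing of just these $m-n+1$ minors characterizes rank $n-1$ on $\tilde{\mathcal{U}}$ — a point the paper takes as standard — which is a welcome addition rather than a deviation.
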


\begin{proof} Let $(p,\tilde{\f})\in T^{\ast}M^{n,n-1}$, we may suppose without loss of generality that
\begin{equation*}\m(\f)=\left|\begin{array}{cccc}
\f_1^1&\f_2^1&\cdots &\f_{n-1}^1\\
\vdots & \vdots &\ddots &\vdots\\
\f_1^{n-1}&\f_2^{n-1}&\cdots &\f_{n-1}^{n-1}
\end{array}\right|\neq0
\end{equation*} for $(x,\varphi)$ in an open neighborhood $\tilde{\mathcal{U}}$ of $T^{\ast}M^n$, with $(p,\tilde{\f})\in\tilde{\mathcal{U}}$. In this situation, $T^{\ast}M^{n,n-1}$ can be locally defined as $$T^{\ast}M^{n,n-1}=\left\{(x,\f)\in \tilde{\mathcal{U}} \ | \ \m_n=\ldots=\m_m=0\right\}$$ where $\m_i:=\m_i(\f)$ is the determinant \begin{equation*}\m_i(\f)=\left|\begin{array}{ccccc}
\f_1^1&\f_2^1&\cdots &\f_{n-1}^1&\f_{n}^1\\
\vdots & \vdots &\ddots &\vdots &\vdots\\
\f_1^{n-1}&\f_2^{n-1}&\cdots &\f_{n-1}^{n-1}&\f_{n}^{n-1}\\
\f_1^{i}&\f_2^{i}&\cdots &\f_{n-1}^{i}&\f_{n}^{i}
\end{array}\right|, \ i=n,\ldots,m.
\end{equation*} 
Let us verify that $\rank\left(d\m_n,\ldots,d\m_m\right)=m-n+1$ in $(T^{\ast}M^{n,n-1})\cap\tilde{\mathcal{U}}$. 

For clearer notations, consider $I=\{1,\ldots,n\}$ and $I_i=\{1,\ldots,n-1,i\}$ for each $i\in\{n,\ldots, m\}$. Then
\begin{equation}\label{gradmii}d\m_i(\f)=\displaystyle\sum_{j\in I, \l\in I_i}{\cof(\f_j^\l,\m_i)d \f_j^\l},\end{equation} where $\cof(\f_j^\l,\m_i)$ is the cofactor of $\f_j^\l$ in the matrix $$\left[\begin{array}{ccccc}
\f_1^1&\f_2^1&\cdots &\f_{n-1}^1&\f_{n}^1\\
\vdots & \vdots &\ddots &\vdots &\vdots\\
\f_1^{n-1}&\f_2^{n-1}&\cdots &\f_{n-1}^{n-1}&\f_{n}^{n-1}\\
\f_1^{i}&\f_2^{i}&\cdots &\f_{n-1}^{i}&\f_{n}^{i}
\end{array}\right]$$ and $$d \f_j^\l=\left(\frac{\partial \f_j^\l}{\partial \f_1^1},\ldots,\frac{\partial \f_j^\l}{\partial \f_1^m},\frac{\partial \f_j^\l}{\partial \f_2^1},\ldots,\frac{\partial \f_j^\l}{\partial \f_2^m},\ldots,\frac{\partial \f_j^\l}{\partial \f_n^1},\ldots,\frac{\partial \f_j^\l}{\partial \f_n^m}\right)$$ is the vector whose coordinate at the position $(j-1)m+\l$ is equal to $1$ and all the others are zero. In particular, since $i\in\{n, \ldots, m\}$, $$d \f_n^i=(0,\ldots,0,\underbrace{0,\ldots,\overset{i}{1},\ldots,0}_{m-n+1})\in\underbrace{(\R^m)^{\ast}\times\ldots\times(\R^m)^{\ast}}_{n \text{ times}}$$ and the $m-n+1$ last coordinates of $d \f_j^\l$ are zero for all $j\neq n$ or $\l\neq i$. Moreover, $\cof(\f_n^i,\m_i)=\m(\f)\neq0$, for $i=n,\ldots,m$. Thus, $$\frac{\partial(\m_n,\ldots,\m_m)}{\partial(\f_n^n,\ldots,\f_n^m)}=\left|\begin{array}{ccc}
\cof(\f_n^n,\m_n) &  \cdots & 0\\
\vdots &  \ddots & \vdots \\
0 & \cdots & \cof(\f_n^m,\m_m)\\
\end{array}\right|.$$ That is, for all $(x,\f)\in(T^{\ast}M^{n,n-1})\cap\tilde{\mathcal{U}}$, we have
\begin{equation}\label{menornaonulo}\frac{\partial(\m_n,\ldots,\m_m)}{\partial(\f_n^n,\ldots,\f_n^m)}=\m(\f)^{(m-n+1)}\left|
{\renewcommand{\arraystretch}{1}
\begin{array}{ccc}
1&  \cdots & 0\\
\vdots & \ddots & \vdots \\
0 & \cdots & 1\\
\end{array}}\right|\neq0.\end{equation} Therefore, $\rank(\m_n,\ldots,\m_m)=m-n+1$
for all $(x,\f)\in(T^{\ast}M^{n,n-1})\cap\tilde{\mathcal{U}}$.
\end{proof}


\begin{lema}\label{eqloc3} Let $p\in\S^1(\o)$ be a singular point of $\o$, it is possible to exhibit, explicitly, functions ${\normalfont\M_i(x):\mathcal{U}\rightarrow\R}$, $i=n, \ldots, m$, defined on an open neighborhood $\mathcal{U}\subset M$, with $p\in\mathcal{U}$, such that, locally $$\mathcal{U}\cap\S^1(\o)=\left\{x\in \mathcal{U} \ | \ {\normalfont \M_n(x)}=\ldots={\normalfont \M_m(x)=0}\right\}$$ with $\rank\left({\normalfont d\M_n(x),\ldots,d\M_m(x)}\right)=m-n+1$, for all $x\in \S^1(\o)\cap\mathcal{U}$.
\end{lema}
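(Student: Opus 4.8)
The plan is to obtain the local equations of $\S^1(\o)$ by pulling back, through the coframe map $\o$, the equations of $T^{\ast}M^{n,n-1}$ produced in Lemma \ref{eqloc2}. Since $p\in\S^1(\o)$ we have $\o(p)=(p,\o_1(p),\ldots,\o_n(p))\in T^{\ast}M^{n,n-1}$, so Lemma \ref{eqloc2} furnishes a neighborhood $\tilde{\mathcal{U}}$ of $\o(p)$ in $T^{\ast}M^n$ and functions $\m_n,\ldots,\m_m$, depending only on $\f$, cutting out $T^{\ast}M^{n,n-1}$ there with independent differentials. I would set $\mathcal{U}=\o^{-1}(\tilde{\mathcal{U}})$ and define $\M_i(x)=\m_i(\o_1(x),\ldots,\o_n(x))=(\m_i\circ\o)(x)$ for $i=n,\ldots,m$; these are the explicit functions demanded by the statement.

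First I would check that the $\M_i$ cut out $\S^1(\o)$ locally. Because $\o$ has $\corank 1$, property $(b)$ of Definition \ref{corank1} gives $\o^{-1}(T^{\ast}M^{n,\leq n-2})=\emptyset$, so on $\mathcal{U}$ the singular set is exactly $\o^{-1}(T^{\ast}M^{n,n-1})$; substituting $\f=\o(x)$ into the equations of Lemma \ref{eqloc2} yields $\mathcal{U}\cap\S^1(\o)=\{x\in\mathcal{U}\mid\M_n(x)=\cdots=\M_m(x)=0\}$. For the rank assertion the key identity is the chain rule
$$d\M_i(x)=d\m_i(\o(x))\circ d\o(x),\qquad i=n,\ldots,m.$$
Introducing the surjection $\Phi\colon T_{\o(x)}(T^{\ast}M^n)\to\R^{m-n+1}$, $\;w\mapsto(d\m_n(\o(x))(w),\ldots,d\m_m(\o(x))(w))$, whose kernel is $T_{\o(x)}(T^{\ast}M^{n,n-1})$ (again by Lemma \ref{eqloc2}), the composite $\Phi\circ d\o(x)$ is precisely $(d\M_n(x),\ldots,d\M_m(x))$. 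The transversality $\o\pitchfork T^{\ast}M^{n,n-1}$ from Definition \ref{corank1}$(a)$ reads $\Im(d\o(x))+\Ker\Phi=T_{\o(x)}(T^{\ast}M^n)$, which is equivalent to $\Phi$ being surjective on $\Im(d\o(x))$, i.e.\ to $\rank(d\M_n(x),\ldots,d\M_m(x))=m-n+1$ for $x\in\S^1(\o)\cap\mathcal{U}$.

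The step I expect to carry the real content is this last equivalence between transversality and the rank of the pulled-back differentials. One cannot simply reuse the nonvanishing minor $\partial(\m_n,\ldots,\m_m)/\partial(\f_n^n,\ldots,\f_n^m)=\m(\f)^{m-n+1}$ of Lemma \ref{eqloc2}, because after the substitution $\f=\o(x)$ the entries $\o_n^n(x),\ldots,\o_n^m(x)$ cease to be independent coordinates and become functions of $x$, so that minor says nothing about $d_x\M_i$. It is exactly the $\corank 1$ transversality hypothesis that compensates for this loss, and making precise the linear-algebra equivalence ``$A+\Ker\Phi=V\iff\Phi|_A$ surjective'' (with $A=\Im(d\o(x))$ and $V=T_{\o(x)}(T^{\ast}M^n)$) is the heart of the argument; the identification of the zero set and the chain-rule bookkeeping are then routine.
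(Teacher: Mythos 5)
Your proposal is correct, and it rests on the same two ingredients as the paper's proof: the functions you define, $\M_i=\m_i\circ\o$, are exactly the determinants $\M_i(x)$ of Equation (\ref{expMi}), and both arguments deduce the rank statement from Lemma \ref{eqloc2} together with the transversality in Definition \ref{corank1}$(a)$. The only difference is how the implication ``transversality $\Rightarrow$ $\rank(d\M_n,\ldots,d\M_m)=m-n+1$'' is established. You use the chain rule $d\M_i(x)=d\m_i(\o(x))\circ d\o(x)$ and the linear-algebra equivalence $\Im(d\o(x))+\Ker\Phi=T_{\o(x)}(T^{\ast}M^n)$ if and only if $\Phi$ is surjective on $\Im(d\o(x))$, which is the standard preimage-theorem argument and settles the point in a few lines; you are also right that the nonvanishing minor $\partial(\m_n,\ldots,\m_m)/\partial(\f_n^n,\ldots,\f_n^m)$ of Lemma \ref{eqloc2} cannot be reused directly once $\f=\o(x)$ is no longer a free coordinate. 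The paper instead phrases the hypothesis as $G(\o)\pitchfork T^{\ast}M^{n,n-1}$ for the graph $G(\o)$, records it as the maximal rank of the combined Jacobian of the equations $\m_i=0$ and $g_{\l j}=\f_j^{\l}-\o_j^{\l}(x)=0$, and then shows by an explicit cofactor computation that a nontrivial relation $\sum_{i}\a_i\,d\M_i(x)=0$ would force a vanishing nontrivial combination of the rows of that Jacobian --- i.e.\ it carries out your abstract equivalence by hand, as a proof by contradiction. Your version is shorter and more conceptual; the paper's version keeps everything in the explicit coordinate form that is reused in the later lemmas (e.g.\ Lemma \ref{nondegeneratexi}). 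There is no gap in your argument.
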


{\normalfont }

\begin{proof}
Let $\o=(\o_1,\ldots,\o_n)$ be a Morin $n$-coframe and let $p\in\Sigma^1(\o)$. By Remark \ref{obsmatrixM}, we can consider $\mathcal{U}\subset M$ an open neighborhood with $p\in\mathcal{U}$, where $\M(x)\neq0$. 
Thus, in this neighborhood the set $\S^1(\o)$ can be defined  as $$\mathcal{U}\cap\S^1(\o)=\{x\in\mathcal{U} \ | \ \M_n=\ldots=\M_m=0\},$$ where $\M_i:=\M_i(x)$ is the determinant \begin{equation}\label{expMi}\M_i(x)=\left|
{\renewcommand{\arraystretch}{2}
\begin{array}{ccccc}
\o_1^1(x)&\o_2^1(x)&\cdots &\o_{n-1}^1(x)&\o_{n}^1(x)\\
\vdots & \vdots &\ddots &\vdots &\vdots\\
\o_1^{n-1}(x)&\o_2^{n-1}(x)&\cdots &\o_{n-1}^{n-1}(x)&\o_{n}^{n-1}(x)\\
\o_1^{i}(x)&\o_2^{i}(x)&\cdots &\o_{n-1}^{i}(x)&\o_{n}^{i}(x)
\end{array}}\right|
\end{equation} for $i=n,\ldots,m$. Let us verify that $\rank\left(d\M_n(x),\ldots,d\M_m(x)\right)=m-n+1$, for all $x\in\S^1(\o)\cap\mathcal{U}$.

Let $G(\o)=\{(x,\o_1(x),\ldots,\o_n(x))\ | \ x\in M\}$ be the graph of the $n$-coframe $\o$. By Lemma \ref{eqloc2}, we can consider an open neighborhood $\tilde{\mathcal{U}}$ in $T^{\ast}M^n$ with $\left(p,\o(p)\right)\in \tilde{\mathcal{U}}$ and $\pi_x(\tilde{\mathcal{U}})=\mathcal{U}$, where $\pi_x:(\R^m)^n\rightarrow\R^m$ is the projection on the $m$ first coordinates, so that the manifolds $T^{\ast}M^{n,n-1}$ and $G(\o)$ can be locally defined as: $$T^{\ast}M^{n,n-1}=\{(x,\f)\in \tilde{\mathcal{U}} \ | \ \m_n=\ldots=\m_m=0\},$$ with $\rank\left(d \m_n,\ldots,d \m_m\right)=m-n+1$ on $ T^{\ast}M^{n,n-1}\cap\tilde{\mathcal{U}}$; and \begin{equation*}\setlength{\arraycolsep}{0.06cm}{\begin{array}{cl}
G(\o)&=\{(x,\f)\in \tilde{\mathcal{U}} \ | \ \f_j^{\l}=\o_j^{\l}(x); \ j=1,\ldots,n; \ \l=1,\ldots,m \}\\
&=\{(x,\f)\in \tilde{\mathcal{U}}\ | \ g_{\l j}(x,\f)=0; \ j=1,\ldots,n; \ \l=1,\ldots,m\}, 
\end{array}}
\end{equation*} with $\rank \left(d g_{11},\ldots,d g_{m1},\ldots,d g_{1n},\ldots,d g_{mn}\right)=nm$ on $G(\o)\cap \tilde{\mathcal{U}} $, where the functions $g_{\l j}:T^{\ast}M^n\rightarrow \R$ are given by $g_{\l j}(x,\f)=\f_j^{\l}-\o_j^{\l}(x)$.

Let $x\in\S^1(\o)\cap\mathcal{U}$, then $G(\o)\pitchfork T^{\ast}M^{n,n-1}$ at $(x,\o(x))\in \tilde{\mathcal{U}}$ and, at this point, $\rank \left(d \m_n,\ldots,d \m_m,d g_{11},\ldots,d g_{mn}\right)=m-n+1+nm$. That is, the matrix \begin{equation}\label{matrizJAc}\left[\begin{array}{c}
d\m_n\\
\vdots\\
d\m_m\\
d g_{11}\\
\vdots\\
d g_{mn}
\end{array}\right]=\left[\setlength{\arraycolsep}{0.1cm}{
\begin{array}{ccccccc}
      &                    & & \vdots &  & d_{\f}\m_n &       \\
      & O_{(m-n+1)\times m}& & \vdots &  & \vdots       &       \\
      &                    & & \vdots &  & d_{\f}\m_m &       \\
\cdots&\cdots \  \cdots \  \cdots  \ \cdots & \cdots & \vdots & \cdots & \cdots \ \cdots  &\cdots \\

      & -d_x \o_{1}^{1} & & \vdots &  &              &        \\
      & \vdots             & & \vdots &  &   Id_{(nm)}    &        \\
      & -d_x \o_{n}^{m} & & \vdots &  &              &        \\
\end{array}}\right]
\end{equation} has maximal rank at $(x,\o(x))$, where $O_{(m-n+1)\times m}$ is the null matrix of size $(m-n+1)\times m$, $Id_{(nm)}$ represents the identity matrix of order $nm$ and $d_x$ and $d_{\f}$ denote the derivatives with respect to $x=(x_1,\ldots,x_m)$ and $\f=(\f_1^1,\ldots,\f_1^m,\ldots,\f_n^1,\ldots,\f_n^m)$ respectively. 

We have that Equation (\ref{gradmii}) of Lemma \ref{eqloc2} is the derivative of $\m_i$ with respect to $\f$. Thus, we can write \begin{equation}\label{gradmi}
d\m_i(\f)=\displaystyle\sum_{j\in I, \l\in I_i}{\cof(\f_j^\l,\m_i)f_{\l j}}
\end{equation} for $I=\{1,\ldots,n\}$, $I_i=\{1,\ldots,n-1,i\}$ and $i=n,\ldots, m$. Where $f_{\l j}$ denotes the vector $f_{\l j}=(0,\ldots,0,0\ldots,0,1,0,\ldots,0)\in \underbrace{(\R^m)^{\ast}\times\ldots\times(\R^m)^{\ast}}_{n+1 \text{ times }}$ whose coordinates are all zero, except at the position $jm+\l$, for each $j\in I$ and $\l\in I_i$. 

We also have, \begin{equation*}
d\M_i(x)=\displaystyle\sum_{j\in I, \l\in I_i}{\cof(\o_j^\l(x),\M_i(x))d_x \o_j^{\l}(x)}
\end{equation*} and since $(x,\o(x))\in G(\o)\pitchfork T^{\ast}M^{n,n-1}$, we have $\o_j^{\l}(x)=\f_j^{\l}$ so that \begin{equation}\label{gradMi}
d\M_i(x)=\displaystyle\sum_{j\in I, \l\in I_i}{\cof(\f_j^{\l},\m_i)d_x \o_j^{\l}(x)}.
\end{equation}

Let us suppose that $\rank\left(d\M_n(x),\ldots,d\M_m(x)\right)<m-n+1$. Then, there exists $(\a_n,\ldots,\a_m)\neq(0,\ldots,0)$ such that $$\displaystyle\sum_{i=n}^{m}{\a_id\M_i(x)}=0.$$ Thus,
\begin{equation}\label{eqnula}0=\displaystyle\sum_{i=n}^{m}{\a_id\M_i(x)}\overset{\left(\ref{gradMi}\right)}{=}
\displaystyle\sum_{i=n}^{m}{\a_i\left[ \ \displaystyle\sum_{j\in I, \l\in I_i}{\cof(\f_j^{\l},\m_i)d_x \o_j^{\l}(x)} \ \right]}.
\end{equation} 
Let $d \tilde{\o}_j^{\l}(x)=(d_x \o_j^{\l}(x),0,\ldots,0)\in\underbrace{(\R^m)^{\ast}\times\ldots\times(\R^m)^{\ast}}_{n+1 \text{ times }}$, we have \begin{equation}\label{equaalpha}{\renewcommand{\arraystretch}{2.5}\begin{array}{rl}
\displaystyle\sum_{i=n}^{m}\a_i\left[ \displaystyle\sum_{j\in I, \l\in I_i}{\cof(\f_j^{\l},\m_i)d g_{\l j}}\right]\overset{\left(\ref{matrizJAc}\right)}{=}&\displaystyle\sum_{i=n}^{m}\a_i\left[\displaystyle\sum_{j\in I, \l\in I_i}\cof(\f_j^{\l},\m_i)\left(f_{\l j}-d\tilde{\o}_j^{\l}\right) \right]\\
\overset{\left(\ref{eqnula}\right)}{=}&\displaystyle\sum_{i=n}^{m}{\a_i\left[\displaystyle\sum_{j\in I, \l\in I_i}{\cof(\f_j^{\l},\m_i)f_{\l j}} \right]}\\
\overset{\left(\ref{gradmi}\right)}{=}&\displaystyle\sum_{i=n}^{m}{\a_id\m_i}.
\end{array}}
\end{equation} 
On the other hand, 
\begin{equation}\label{equabeta}
\displaystyle\sum_{i=n}^{m}\a_i\left[ \displaystyle\sum_{j\in I, \l\in I_i}{\cof(\f_j^{\l},\m_i)d g_{\l j}}\right]=\displaystyle\sum_{j\in I, \l\in \{1,\ldots,m\}}\beta_{\l j}d g_{\l j}
\end{equation} where 
\begin{equation*}\beta_{\l j}=\left\{
\begin{array}{ll}
\displaystyle\sum_{i=n}^{m}{\a_i\cof(\f_j^{\l},\m_i)}, & j\in I, \l=1,\ldots,n-1;\\
\a_{\l}\cof(\f_j^{\l},\m_{\l}), & j\in I, \l=n,\ldots,m.
\end{array}\right.
\end{equation*} Since $(\a_n,\ldots,\a_m)\neq(0,\ldots,0)$, by Equations (\ref{equaalpha}) and (\ref{equabeta}), we obtain
\begin{equation*}
\displaystyle\sum_{j, \l}\beta_{\l j}d g_{\l j}-\displaystyle\sum_{i=n}^{m}{\a_id\m_i}=0
\end{equation*} which is a linear combination (with non-zero coefficients) of the row vectors of the matrix (\ref{matrizJAc}). This is a contradiction, since $\rank \left(d \m_n,\ldots,d \m_m,d g_{11},\ldots,d g_{mn}\right)$ is maximal. Therefore $\rank\left(d\M_n(x),\ldots,d\M_m(x)\right)=m-n+1.$
\end{proof}

\begin{lema}\label{nondegeneratexi} For almost every $a\in\R^n\setminus\{\vec{0}\}$, the 1-form $\x(x)=\displaystyle\sum_{i=1}^{n}{a_i\o_i(x)}$ admits only non-degenerate zeros. Moreover, such zeros belong to $A_1(\o)$.
\end{lema}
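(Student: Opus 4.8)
The plan is to split the statement into two independent claims and dispatch each with the machinery already assembled. The \emph{easy} claim is that the zeros lie in $A_1(\o)$: by Lemma \ref{zeroszsobresigma1} every zero of $\x$ lies in $\S^1(\o)$, while Lemma \ref{lemainterzeroscomsigma2} gives $Z(\x)\cap\S^2(\o)=\emptyset$ for almost every $a$; since $A_1(\o)=\S^1(\o)\setminus\S^2(\o)$ (Remark \ref{akclosure}), every zero of $\x$ belongs to $A_1(\o)$ for almost every $a$. The substance of the lemma is the non-degeneracy statement, which I would obtain by reducing the non-degeneracy of $\x$ on the $m$-manifold $M$ to the non-degeneracy of the \emph{restricted} system $(\x_1,\dots,\x_{n-1})|_{\S^1(\o)}$ on the $(n-1)$-manifold $\S^1(\o)$, and then invoking Lemma \ref{lemadorank}.

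For the reduction I would work near a zero $p\in\S^1(\o)$ in a chart $\mathcal{U}$ where $\M(p)\neq0$ and $a_n\neq0$ (Remark \ref{obsmatrixM}), so that $\S^1(\o)\cap\mathcal{U}=\{\M_n=\dots=\M_m=0\}$ with $d_x\M_n,\dots,d_x\M_m$ independent and spanning $N_x^{\ast}\S^1(\o)$ (Lemma \ref{eqloc3}). Writing $\x=(\x_1,\dots,\x_m)$ with $\x_s=\sum_i a_i\o_i^s$, the point $p$ is a non-degenerate zero of $\x$ exactly when $d_x\x_1(p),\dots,d_x\x_m(p)$ are linearly independent. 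The key computation combines the Jacobi formula for $d_x\M_i$ with the cofactor identity of Lemma \ref{lemadoscofatores}: at a zero of $\x$ one gets
\begin{equation*}
d_x\M_i=\frac{\M}{a_n}\,d_x\x_i+\frac{1}{a_n}\sum_{j=1}^{n-1}\cof(\o_n^j,\M_i)\,d_x\x_j,\qquad i=n,\dots,m,
\end{equation*}
since $\cof(\o_n^i,\M_i)=\M\neq0$. This shows the change of generators from $\{d_x\x_1,\dots,d_x\x_m\}$ to $\{d_x\x_1,\dots,d_x\x_{n-1},d_x\M_n,\dots,d_x\M_m\}$ is block-triangular with invertible diagonal $(\M/a_n)I$, so the two families have equal rank. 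As $d_x\M_n,\dots,d_x\M_m$ span $N_p^{\ast}\S^1(\o)$, restricting covectors to $T_p\S^1(\o)$ yields the desired equivalence: $p$ is a non-degenerate zero of $\x$ if and only if $d(\x_1|_{\S^1(\o)})(p),\dots,d(\x_{n-1}|_{\S^1(\o)})(p)$ are linearly independent, i.e.\ $p$ is a non-degenerate zero of $(\x_1,\dots,\x_{n-1})|_{\S^1(\o)}$.

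The genericity then follows cleanly. I would parametrize $\S^1(\o)\cap\mathcal{U}$ by $y\in V\subset\R^{n-1}$ and set $\hat H(y,a)=\big(\x_1|_{\S^1(\o)}(y),\dots,\x_{n-1}|_{\S^1(\o)}(y)\big)$, a map $V\times(\R^n\setminus\{\vec{0}\})\to\R^{n-1}$. Its derivative in $a$ is the matrix $[\o_i^j]_{1\le j\le n-1,\,1\le i\le n}$, whose first $n-1$ columns have determinant $\M\neq0$; hence $d\hat H$ has rank $n-1$ at \emph{every} point, so the hypothesis of Lemma \ref{lemadorank} (applied with $m$ replaced by $n-1$) holds automatically on $\hat H^{-1}(\vec{0})$. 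The lemma yields, for almost every $a$, that $d_y\hat H$ has rank $n-1$ at every zero of $\hat H$, which by the equivalence above means every zero of $\x$ is non-degenerate. Covering $\S^1(\o)$ by countably many such charts and intersecting the resulting full-measure sets of $a$, together with the full-measure set from the first paragraph, finishes the proof.

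The main obstacle is the reduction step, i.e.\ passing from the $m\times m$ linearization of $\x$ on $M$ to the $(n-1)\times(n-1)$ linearization of the restricted system on $\S^1(\o)$. A naive attempt to apply Lemma \ref{lemadorank} directly to $H(x,a)=\x_a(x)\colon\mathcal{U}\times(\R^n\setminus\{\vec{0}\})\to\R^m$ fails, because $d_aH=[\o_i^s]$ has rank only $n-1$ and the full differential $dH$ need not attain rank $m$ at a degenerate zero; it is precisely the cofactor identity above, which collapses the $m-n+1$ redundant components of $\x$ onto the defining functions $\M_i$ of $\S^1(\o)$, that makes the reduction work and thereby renders the full rank of $d_a\hat H$ automatic.
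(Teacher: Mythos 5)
Your proof is correct and follows essentially the same route as the paper: both arguments reduce to the components $\x_1,\dots,\x_{n-1}$ via Remark \ref{obsmatrixM}, use the cofactor identity of Lemma \ref{lemadoscofatores} to trade the remaining components $d_x\x_n,\dots,d_x\x_m$ for the local defining functions $d_x\M_n,\dots,d_x\M_m$ of $\S^1(\o)$, and invoke the Sard-type Lemma \ref{lemadorank} for the genericity. The only difference is organizational: the paper runs the Sard argument on the augmented $m$-component system $(\M_n,\dots,\M_m,\x_1,\dots,\x_{n-1})$ on $\mathcal{U}\subset M$ and then applies the cofactor identity to convert its full rank into non-degeneracy of $\x$, whereas you apply the cofactor identity first to reduce non-degeneracy to the $(n-1)$-component system restricted to $\S^1(\o)$ and then run the Sard argument there.
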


\begin{proof} Suppose that $p\in M$ is a zero of $\x$. Then, by Lemmas \ref{zeroszsobresigma1} and \ref{lemainterzeroscomsigma2}, for almost every $a\in\R^n\setminus\{\vec{0}\}$ we have that $p\in\S^1(\o)\setminus\S^2(\o)$, that is, $p\in A_1(\o)$. Assume that $\M(x)\neq0$ in an open neighborhood $\mathcal{U}\subset M$ of $p$ (see Remark \ref{obsmatrixM}) so that $\mathcal{U}\cap\S^1(\o)=\{x\in\mathcal{U}: \M_n(x)=\ldots=\M_m(x)=0\}$. 
Let us write $$\x_s(x)=\displaystyle\sum_{i=1}^{n}{a_i\o_i^s(x)}, \, s= 1,\ldots, m$$ and let us consider the mapping $F:\mathcal{U}\times\Rnm\rightarrow\R^m$ defined by $$F(x,a)=(\M_n(x),\ldots, \M_m(x), \x_1(x), \ldots, \x_{n-1}(x)).$$ Its Jacobian matrix at a point $(x,a)$ is given by:
\begin{equation*}\Jac F(x,a)=\left[\setlength{\arraycolsep}{0.1cm}{\begin{array}{cccccccc}
			  d_x\M_n(x) & \vdots & &&& \\
			  \vdots          & \vdots & &\multicolumn{2}{c}{O_{(m-n)\times n}}& \\
 			  d_x\M_m(x) & \vdots & &&& \\
 \cdots \ \ \cdots \ \ \cdots& \vdots & \cdots \ \ \cdots & \cdots & \cdots \ \ \cdots \ \ \cdots& \cdots \ \ \cdots \\
        d_x\x_1(x)  & \vdots & \o_1^1(x) & \cdots & \o_{n-1}^1(x) & \o_n^1(x) \\
                 \vdots & \vdots & \vdots   & \ddots & \vdots       & \vdots \\
       d_x\x_{n-1}(x) & \vdots & \o_1^{n-1}(x) & \cdots & \o_{n-1}^{n-1}(x) & \o_n^{n-1}(x) \\   
\end{array}}\right].
\end{equation*} Note that, by Lemma {\ref{lemaauxiliar}}, $F^{-1}(\vec{0})$ corresponds to the zeros of $\x$ on $\S^1(\o)\cap\mathcal{U}$. Since $\M(x)\neq0$ and $\rank(d\M_n(x), \ldots,d\M_m(x))=m-n+1$ for all $x\in\S^1(\o)\cap\mathcal{U}$, then $\rank(\Jac F(x,a))=m$ for all $(x,a)\in F^{-1}(\vec{0})$. Thus, $\dim F^{-1}(\vec{0})=n$. 

Let $\pi:F^{-1}(\vec{0})\rightarrow\Rnm$ be the projection $\pi(x,a)=a$, by Sard's Theorem, almost every $a\in\Rnm$ is a regular value of $\pi$ and $\dim(\pi^{-1}(a)\cap F^{-1}(\vec{0}))=0$. That is, for almost every $a$, the zeros of $\x$ are isolated in $\S^1(\o)$. Let us proof that, moreover, these zeros are non-degenerate.

Since $\rank(\Jac F(x,a))=m$, for all $(x,a)\in F^{-1}(\vec{0})$, then by Lemma \ref{lemadorank} we have that $\rank(d_x\M_n(p), \ldots, d_x\M_m(p), d_x\x_1(p), \ldots, d_x\x_{n-1}(p))=m$, which happens if and only if $\rank(B)=m$, where $B$ is the matrix
\begin{equation*}B=\left[\begin{array}{c}
        d_x\x_1(p)  \\
                 \vdots \\
       d_x\x_{n-1}(p) \\ 
        a_nd_x\M_n(p)  \\
			  \vdots          \\
 			  a_nd_x\M_m(p) \\  
\end{array}\right]
\end{equation*} whose row vectors we will denote by $R_i, i=1, \ldots,m$ (by Remark \ref{obsmatrixM}, $a_n\neq0$). 

Let us denote $I=\{1,\ldots,n\}$ and $I_i=\{1,\ldots,n-1,i\}$ for each $i\in\{n,\ldots, m\}$. By Equation (\ref{expMi}), we can write $$d \M_i(x)=\displaystyle\sum_{\l\in I, j\in I_i}{\cof(\o_{\l}^j(x),M_i)d \o_{\l}^j(x)}$$ and by Lemma \ref{lemadoscofatores}, $$d \M_i(p)=\displaystyle\sum_{\l\in I, j\in I_i}{\ffrac{a_{\l}}{a_n}\cof(\o_{n}^j(p),M_i)d \o_{\l}^j(p)}.$$ Thus, 
\begin{equation*}\renewcommand{\arraystretch}{2}{\begin{array}{ccl}
a_nd \M_i(p)&=&\displaystyle\sum_{\l\in I, j\in I_i}{a_{\l}\cof(\o_{n}^j(p),M_i)d \o_{\l}^j(p)}\\
                 &=&\displaystyle\sum_{j\in I_i}{\cof(\o_{n}^j(p),M_i)\left[\displaystyle\sum_{\l\in I}a_{\l}d \o_{\l}^j(p)\right]}\\
                 &=&\displaystyle\sum_{j\in I_i}{\cof(\o_{n}^j(p),M_i)\left[d_x\x_j(p)\right]}\\
                 &=&\cof(\o_{n}^i(p),M_i)\left[d_x\x_i(p)\right]+\displaystyle\sum_{j\in I_i\setminus\{i\}}{\cof(\o_{n}^j(p),M_i)\left[d_x\x_j(p)\right]}.
\end{array}}
\end{equation*} Note that, $\cof(\o_{n}^i(p),M_i)=\M(p)\neq0$, for all $i=n, \ldots,m$. Then, for each $i=n, \ldots,m$, we replace the $i^{th}$ row $R_i$ of matrix $B$ by $$\ffrac{1}{\cof(\o_n^i(p),M_i)}\left(R_i-\sum_{j=1}^{n-1}{\cof(\o_n^j(p),M_i)R_j}\right)$$ so that we obtain the matrix of maximal rank:
\begin{equation*}\left[\begin{array}{c}
        d_x\x_1(p)  \\
                 \vdots \\
       d_x\x_{n-1}(p) \\ 
        d_x\x_n(p)  \\
			  \vdots          \\
 			 d_x\x_m(p) \\  
\end{array}\right].
\end{equation*} Therefore, the zeros of $\x(x)$ are non-degenerate. \end{proof}

\begin{lema}\label{zerosnaodegdeak} For almost every $a\in\R^n\setminus\{\vec{0}\}$, the 1-form $\x_{|_{A_k(\o)}}$ admits only non-degenerate zeros, $k\geq2$.
\end{lema}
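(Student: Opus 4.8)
The plan is to establish the statement locally around each zero and then patch together via a countable intersection of full-measure parameter sets. Fix $p\in A_k(\o)$. Since $A_k(\o)=\S^k(\o)\setminus\S^{k+1}(\o)$ and $\S^{k+1}(\o)$ is closed in $\S^k(\o)$ (Remark \ref{akclosure}), I would choose an open neighborhood $\mathcal{U}\subset M$ of $p$ with $\mathcal{U}\cap\S^{k+1}(\o)=\emptyset$, so that $\mathcal{U}\cap A_k(\o)=\mathcal{U}\cap\S^k(\o)$ admits local defining functions, which for brevity I relabel $G_1,\ldots,G_{m-n+k}\colon\mathcal{U}\to\R$ (the functions $F_i$ and $\d_j$), with linearly independent differentials and $N_x^{\ast}\S^k(\o)=\langle dG_1(x),\ldots,dG_{m-n+k}(x)\rangle$. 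By Szafraniec's characterization adapted to 1-forms (as used in Lemma \ref{ptscrticrestasigmak}), a point $x\in\mathcal{U}\cap A_k(\o)$ is a zero of $\x_{|_{A_k(\o)}}$ if and only if there is $\lambda=(\lambda_1,\ldots,\lambda_{m-n+k})$ with $\x(x)=\sum_j\lambda_j dG_j(x)$; writing $\x_s(x)=\sum_i a_i\o_i^s(x)$, I set $N_s(x,a,\lambda)=\x_s(x)-\sum_j\lambda_j\,\partial G_j/\partial x_s(x)$ and form the map $F(x,a,\lambda)=(G_1,\ldots,G_{m-n+k},N_1,\ldots,N_m)$ on $\mathcal{U}\times(\R^n\setminus\{\vec{0}\})\times\R^{m-n+k}$.

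Next I would show that $\vec{0}$ is a regular value of $F$. The rows coming from the $G_j$ involve only $x$-derivatives and are independent, while the $(a,\lambda)$-block of the rows coming from the $N_s$ is $[\,B\mid -C\,]$, where $B$ has columns $\o_1(x),\ldots,\o_n(x)$ and $C$ has columns $dG_1(x),\ldots,dG_{m-n+k}(x)$. Its rank equals $\dim(\langle\bar{\o}(x)\rangle+N_x^{\ast}\S^k(\o))$. For $x\in A_k(\o)$ the definition of $A_k(\o)$ together with Lemma \ref{dimensaointersecao} gives $\dim(\langle\bar{\o}(x)\rangle\cap N_x^{\ast}\S^k(\o))=k-1$, whence this sum has dimension $(n-1)+(m-n+k)-(k-1)=m$ and $[\,B\mid -C\,]$ has full row rank $m$. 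Because the $G_j$-rows vanish on the $(a,\lambda)$-columns, a short independence argument then yields $\rank dF=m+(m-n+k)=2m-n+k$ on $F^{-1}(\vec{0})$, so $\vec{0}$ is a regular value.

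The crucial observation is that $F$ is a square map in the variables $y=(x,\lambda)\in\R^{2m-n+k}$ depending on the parameter $a$. Having verified $\rank dF=2m-n+k$ on $F^{-1}(\vec{0})$, I would apply Lemma \ref{lemadorank} (with the r\^ole of $m$ played by $2m-n+k$) to conclude that for almost every $a\in\R^n\setminus\{\vec{0}\}$ the partial Jacobian $d_{(x,\lambda)}F$ is nonsingular at every zero. Finally I would identify this nonsingularity with non-degeneracy: choosing coordinates $(u,v)$ on $\mathcal{U}$ with $G_j=v_j$, so that $A_k(\o)=\{v=0\}$ and $\x_{|_{A_k(\o)}}=\sum_i\x_{u_i}(u,0)\,du_i$, a block computation shows $\det d_{(x,\lambda)}F=\pm\det(\partial\x_{u_i}/\partial u_{i'})$, which is exactly the Hessian determinant of the restricted form at the zero. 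Hence every such zero is non-degenerate. Covering $A_k(\o)$ by countably many such neighborhoods and intersecting the corresponding full-measure sets of parameters completes the argument.

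I expect the main obstacle to be this last identification, namely showing that nonsingularity of the bordered Jacobian $d_{(x,\lambda)}F$ is equivalent to non-degeneracy of the zero of $\x_{|_{A_k(\o)}}$, since it requires passing to coordinates adapted to the constraints and tracking the Lagrange-multiplier directions correctly. The rank count making $\vec{0}$ a regular value of $F$ is the other delicate point, and it hinges entirely on the intersection dimension $\dim(\langle\bar{\o}(x)\rangle\cap N_x^{\ast}\S^k(\o))=k-1$ valid on $A_k(\o)$.
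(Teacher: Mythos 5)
Your proposal is correct and follows essentially the same route as the paper: the same Szafraniec-type Lagrange-multiplier formulation, the same map $F(x,a,\lambda)$ built from the local defining functions of $\S^k(\o)$ together with the components $N_s$, the same rank count via $\dim(\langle\bar{\o}(x)\rangle\cap N_x^{\ast}\S^{k}(\o))=k-1$ on $A_k(\o)$, and the same Sard/projection step (the paper restricts the domain to $\{\d_{k+1}\neq0\}$ where you shrink the neighborhood, and it concludes via transversality of $\pi^{-1}(a)$ to $F^{-1}(\vec{0})$ where you invoke Lemma \ref{lemadorank}; these are equivalent). The final identification of nonsingularity of the bordered Jacobian with non-degeneracy of the restricted zero is exactly what the paper delegates to Szafraniec's characterization, so no gap remains.
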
 

\begin{proof} Suppose that $\x_{|_{A_k(\o)}}(p)=0$. By Lemmas \ref{eqlocalSk} and \ref{eqloc3}, we can consider $\mathcal{U}\subset M$ an open neighborhood of $p$ where $\M(x)\neq0$ and on which the respective singular sets $(k=2, \ldots, n)$ can be locally defined as  
\begin{equation*}\setlength{\arraycolsep}{0.06cm}{\begin{array}{lll}
\mathcal{U}\cap\S^1(\o)&=&\{x\in\mathcal{U}: \M_n(x)=\ldots=\M_m(x)=0\},\\ 
\mathcal{U}\cap\S^k(\o)&=&\{x\in\mathcal{U}: \M_n(x)=\ldots=\M_m(x)=\d_2(x)=\ldots=\d_k(x)=0\},
\end{array}}
\end{equation*} with
\begin{equation*}\setlength{\arraycolsep}{0.06cm}{\begin{array}{l}
\rank(d\M_n, \ldots,d\M_m)=m-n+1, \forall x\in\S^1(\o)\cap\mathcal{U},\\
\rank(d\M_n, \ldots,d\M_m,d\d_2,\ldots,d\d_k)=m-n+k, \forall x\in\S^k(\o)\cap\mathcal{U}.
\end{array}}
\end{equation*} 

Analogously to the proof of Lemma \ref{ptscrticrestasigmak}, by Szafraniec's characterization (see \cite[p. 196]{sza2}), $x$ is a zero of the restriction $\x_{|_{\S^k(\o)}}$ if and only if there exists $(\lambda_n,\ldots,\lambda_m,\beta_2,\ldots,\beta_k)\in\R^{m-n+k}$ such that $$\x(x)=\displaystyle\sum_{j=n}^{m}{\lambda_jd\M_j(x)}+\displaystyle\sum_{\l=2}^{k}{\beta_{\l}d\d_{\l}(x)}.$$ Let us consider the functions $$N_s(x,a,\lambda,\beta):=\x_s(x)-\displaystyle\sum_{j=n}^{m}{\lambda_j\ffrac{\partial\M_j}{\partial x_s}(x)}-\displaystyle\sum_{\l=2}^{k}{\beta_{\l}\ffrac{\partial\d_{\l}}{\partial x_s}(x)}, \ s=1,\ldots, m,$$ so that $\x_{|_{\S^k(\o)}}(x)=0$ if and only if $N_s(x,a,\lambda,\beta)=0$, for all $s=1,\ldots,m$.

Let $G:\mathcal{U}\setminus\{\d_{k+1}=0\}\times\Rnm\times\R^{m-n+k}\rightarrow\R^{2m-n+k}$ be the mapping $$G(x,a,\lambda,\beta)=(\M_n,\ldots,\M_m,\d_2,\ldots,\d_{k},N_1,\ldots,N_m).$$ Its Jacobian matrix at a point $(x,a,\lambda,\beta)\in G^{-1}(\vec{0})$ is given by:
\begin{equation*}\Jac G(x,a,\lambda,\beta)=\left[\setlength{\arraycolsep}{0.12cm}{\begin{array}{ccccc}
			  d_x\M_n(x) & \vdots & \\
			  \vdots          & \vdots & && \\
 			  d_x\M_m(x) & \vdots &\multicolumn{3}{c}{\multirow{2}{*}{$O_{(m-n+k)\times(m+k)}$}} \\
        d_x\d_2(x) & \vdots & && \\
        \vdots          & \vdots & && \\
 			  d_x\d_{k}(x) & \vdots & && \\
\cdots \ \cdots \ \cdots \ \cdots \ \cdots  & \vdots & \cdots \ \cdots & \cdots &\cdots \ \cdots \ \cdots \ \cdots \\                      
d_xN_1(x,a,\lambda,\beta)          & \vdots &  &  \vdots & \\
        \vdots       & \vdots & \ B_{m\times n} &  \vdots & C_{m\times(m-n+k)} \\
 d_xN_m(x,a,\lambda,\beta)       & \vdots  & &   \vdots    &    
\end{array}}\right]
\end{equation*} where $O_{(m-n+k)\times(m+k)}$ is a null matrix, $B_{m\times n}$ is the matrix whose column vectors are given by the coefficients of the 1-forms $\o_1(x), \ldots, \o_n(x)$ 
and $C_{m\times(m-n+k)}$ is the matrix whose column vectors are, up to sign, the coefficients of the derivatives $d\M_n,\ldots,d\M_m,d\d_2,\ldots,d\d_{k}$ with respect to $x$:
\begin{equation*}C_{m\times(m-n+k)}=\left[\setlength{\arraycolsep}{0.1cm}{\begin{array}{cccccc}
-\ffrac{\partial\M_n}{\partial x_1}(x) & \cdots & -\ffrac{\partial\M_m}{\partial x_1}(x) & -\ffrac{\partial\d_2}{\partial x_1}(x) & \cdots & -\ffrac{\partial\d_k}{\partial x_1}(x)\\
 \vdots   & \ddots & \vdots       & \vdots \\
-\ffrac{\partial\M_n}{\partial x_m}(x) & \cdots & -\ffrac{\partial\M_m}{\partial x_m}(x) & -\ffrac{\partial\d_2}{\partial x_m}(x) & \cdots & -\ffrac{\partial\d_k}{\partial x_m}(x)\\   
\end{array}}\right].
\end{equation*} Thus, if $(x,a,\lambda,\beta)\in G^{-1}(\vec{0})$ then $x\in\S^{k}(\o)\cap\mathcal{U}$, $\d_{k+1}(x)\neq0$ and $\x_{|_{\S^k(\o)}}(x)=0$. And since $A_k(\o)=\S^k(\o)\setminus\S^{k+1}(\o)$, we have $x\in A_k(\o)\cap Z(\x_{|_{\S^k(\o)}})$, for all $(x,a,\lambda,\beta)\in G^{-1}(\vec{0}).$

On the other hand, if $x\in A_k(\o)$ then $\dim(\langle\bar{\o}(x)\rangle \cap N_x^{\ast}\S^{k-1}(\o))=k-1$ and $\dim(\langle\bar{\o}(x)\rangle \cap N_x^{\ast}\S^{k}(\o))=k-1$, so that $\dim(\langle\bar{\o}(x)\rangle + N_x^{\ast}\S^{k}(\o))=m.$ Thus,
\begin{equation*}\rank\left[\begin{array}{ccccccc}
                d_xN_1(x,a,\lambda,\beta)        & \vdots & & & & \vdots &  \\
               \vdots        & \vdots & &B_{m\times n} &  & \vdots & C_{m\times(m-n+k)}  \\
                d_xN_m(x,a,\lambda,\beta)        & \vdots &  & &  & \vdots    &   \\
\end{array}\right]=m
\end{equation*} and the Jacobian matrix of $G$ has maximal rank at every $(x,a,\lambda,\beta)\in G^{-1}(\vec{0})$. Therefore, $\dim G^{-1}(\vec{0})=(2m+k)-(2m-n+k)=n$. Let $\pi:G^{-1}(\vec{0})\rightarrow\R^n\setminus\{\vec{0}\}$ be the projection $\pi(x,a,\lambda,\beta)=a$, then almost every $a\in\R^n\setminus\{\vec{0}\}$ is a regular value of $\pi$. So, for almost every $a\in\R^n\setminus\{\vec{0}\}$, $\dim(\pi^{-1}(a)\cap G^{-1}(\vec{0}))=0$ and $\pi^{-1}(a)\pitchfork G^{-1}(\vec{0})$. Therefore, the zeros of $\x_{|_{A_k(\o)}}$ are non-degenerate.
\end{proof}

\begin{lema}\label{zerosnaodegdeaum} For almost every $a\in\R^n\setminus\{\vec{0}\}$, the 1-form $\x_{|_{A_1(\o)}}$ admits only non-degenerate zeros.
\end{lema}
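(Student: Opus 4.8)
The plan is to run the argument of Lemma \ref{zerosnaodegdeak} with $k=1$, the only structural change being that the local equations $\M_n,\ldots,\M_m,\d_2,\ldots,\d_k$ cutting out $\S^k(\o)$ are now replaced by the functions $\M_n,\ldots,\M_m$ of Lemma \ref{eqloc3} alone, which cut out $\S^1(\o)$. First I would fix a zero $p\in A_1(\o)$ of $\x_{|_{A_1(\o)}}$ and, using Remark \ref{obsmatrixM} and Lemma \ref{eqloc3}, choose an open neighborhood $\mathcal{U}\subset M$ of $p$ on which $\M(x)\neq0$ and $\mathcal{U}\cap\S^1(\o)=\{x\in\mathcal{U}\,|\,\M_n(x)=\ldots=\M_m(x)=0\}$ with $\rank(d\M_n,\ldots,d\M_m)=m-n+1$ throughout $\S^1(\o)\cap\mathcal{U}$. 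Since $A_1(\o)=\S^1(\o)\setminus\S^2(\o)$ and $\S^2(\o)$ is locally $\{x\in\S^1(\o)\,|\,\d_2(x)=0\}$, I would restrict to the open set $\mathcal{U}\setminus\{\d_2=0\}$, exactly as the domain $\mathcal{U}\setminus\{\d_{k+1}=0\}$ is used in Lemma \ref{zerosnaodegdeak}, so that the zeros detected automatically lie in $A_1(\o)$.

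Next, by Szafraniec's characterization adapted to $1$-forms (as in Lemmas \ref{ptscrticrestasigmak} and \ref{zerosnaodegdeak}), a point $x\in\S^1(\o)\cap\mathcal{U}$ is a zero of $\x_{|_{\S^1(\o)}}$ if and only if there is $\lambda=(\lambda_n,\ldots,\lambda_m)\in\R^{m-n+1}$ with $\x(x)=\sum_{j=n}^{m}{\lambda_j d\M_j(x)}$; setting $N_s(x,a,\lambda):=\x_s(x)-\sum_{j=n}^{m}{\lambda_j\,\partial\M_j/\partial x_s(x)}$ for $s=1,\ldots,m$, this is equivalent to $N_1=\ldots=N_m=0$. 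I would then introduce the map $G:(\mathcal{U}\setminus\{\d_2=0\})\times\Rnm\times\R^{m-n+1}\rightarrow\R^{2m-n+1}$ given by $G(x,a,\lambda)=(\M_n,\ldots,\M_m,N_1,\ldots,N_m)$, so that $G^{-1}(\vec{0})$ records precisely the zeros of $\x_{|_{A_1(\o)}}$ together with their Lagrange multipliers.

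The heart of the matter is to show that $\vec{0}$ is a regular value of $G$. As in Lemma \ref{zerosnaodegdeak}, the Jacobian $\Jac G(x,a,\lambda)$ splits into a top block of rows $d_x\M_j$ (vanishing in the $a$- and $\lambda$-columns, of rank $m-n+1$) and a bottom block of rows $d_xN_s$ whose $a$- and $\lambda$-columns form the matrix $[\,B_{m\times n}\mid C_{m\times(m-n+1)}\,]$, where the columns of $B$ are the coefficients of $\o_1(x),\ldots,\o_n(x)$ and the columns of $C$ are $-d\M_n,\ldots,-d\M_m$. The crucial input, which is exactly what separates $A_1(\o)$ from the larger set $\S^1(\o)$, is that for $x\in A_1(\o)$ one has $\dim(\langle\bar{\o}(x)\rangle\cap N_x^{\ast}\S^1(\o))=0$ (this is the defining condition of $A_1(\o)=\S^1(\o)\setminus\S^2(\o)$ via Definition \ref{defiSk} and Lemma \ref{dimensaointersecao}), whence $\dim(\langle\bar{\o}(x)\rangle+N_x^{\ast}\S^1(\o))=(n-1)+(m-n+1)=m$ and so $\rank[\,B\mid C\,]=m$. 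Combining the rank $m-n+1$ of the top block in the $x$-columns with the rank $m$ of $[\,B\mid C\,]$ in the $a,\lambda$-columns, where the top block is zero, yields $\rank\Jac G=(m-n+1)+m=2m-n+1$, the maximal value.

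It then follows that $G^{-1}(\vec{0})$ is a submanifold of dimension $(2m+1)-(2m-n+1)=n$. Taking the projection $\pi(x,a,\lambda)=a$ onto $\Rnm$ and applying Sard's Theorem, almost every $a$ is a regular value of $\pi$; since $\dim G^{-1}(\vec{0})=n=\dim\Rnm$, for such $a$ the fibre $\pi^{-1}(a)\cap G^{-1}(\vec{0})$ is $0$-dimensional and $\pi^{-1}(a)\pitchfork G^{-1}(\vec{0})$, which, as in Lemma \ref{zerosnaodegdeak}, translates into the non-degeneracy of the zeros of $\x_{|_{A_1(\o)}}$. I expect the main obstacle to be bookkeeping rather than conceptual: one must check that the absence of the functions $\d_2,\ldots,\d_k$ for $k=1$ leaves the block structure of the Jacobian intact, and that the trivial intersection $\langle\bar{\o}(x)\rangle\cap N_x^{\ast}\S^1(\o)=\{0\}$ on $A_1(\o)$ genuinely delivers $\rank[\,B\mid C\,]=m$, which is what makes the transversality argument go through.
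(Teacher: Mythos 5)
Your proposal is correct and is essentially identical to the paper's argument: the paper's proof of this lemma consists of the single remark that it ``follows analogously the proof of Lemma \ref{zerosnaodegdeak}'', and what you have written is precisely that adaptation to $k=1$, with the correct replacement of the local equations by $\M_n,\ldots,\M_m$ alone, the restriction to $\mathcal{U}\setminus\{\d_2=0\}$, and the key rank computation $\dim(\langle\bar{\o}(x)\rangle+N_x^{\ast}\S^1(\o))=m$ on $A_1(\o)$.
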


\begin{proof} This proof follows analogously the proof of Lemma \ref{zerosnaodegdeak}.
\end{proof}

By Lemma \ref{lemazerosrestricoes}, if $p\in A_{k+1}(\o)$, then $p$ is a zero of $\x_{|_{\S^{k+1}(\o)}}$ if and only if $p$ is a zero of $\x_{|_{\S^k(\o)}}$. The next results state that this relation also holds for non-degenerate zeros.

\begin{lema}\label{nondegenerateequivalenceA1} Let $p\in A_1(\o)$ be a zero of $\x_{|_{\S^{1}(\o)}}$, then $p$ is a non-degenerate zero of $\x_{|_{\S^{1}(\o)}}$ if and only if $p$ is a non-degenerate zero of $\x$.
\end{lema}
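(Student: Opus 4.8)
The plan is to prove that each of the two non-degeneracy conditions is equivalent to the nonsingularity of the linearization $L\colon T_pM\to T_p^{\ast}M$ of $\x$ at $p$, which in local coordinates is the matrix $L=\bigl(\partial\x_s/\partial x_t(p)\bigr)_{s,t}$ whose rows are the covectors $d_x\x_s(p)$, where $\x_s=\sum_i a_i\o_i^s$. Two reductions come first. Since $p\in A_1(\o)$, Definition \ref{defiSk} gives $\langle\bar{\o}(p)\rangle\cap N_p^{\ast}\S^1(\o)=\{0\}$; as $\x(p)=\sum_i a_i\o_i(p)$ always lies in $\langle\bar{\o}(p)\rangle$, while $p$ being a zero of $\x_{|_{\S^1(\o)}}$ forces $\x(p)\in N_p^{\ast}\S^1(\o)$, these two facts yield $\x(p)=0$. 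Hence $p$ is genuinely a zero of $\x$ (so the statement is meaningful), and by Lemma \ref{nondegeneratexi} (and its proof) $p$ is a non-degenerate zero of $\x$ precisely when $L$ is nonsingular, i.e.\ $\Ker L=\{0\}$. Working in a neighborhood $\mathcal{U}$ with $\M(p)\neq0$, Lemma \ref{eqloc3} writes $\mathcal{U}\cap\S^1(\o)=\{\M_n=\cdots=\M_m=0\}$ with $N_p^{\ast}\S^1(\o)=\langle d\M_n(p),\ldots,d\M_m(p)\rangle$.

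Next I would characterize the restriction. By Szafraniec's characterization (as in the proof of Lemma \ref{zerosnaodegdeak}, cf.\ \cite[p.~196]{sza2}), $p$ is a non-degenerate zero of $\x_{|_{\S^1(\o)}}$ iff the bordered Jacobian $\left(\begin{smallmatrix} A & 0\\ L & -{}^{t}\!A\end{smallmatrix}\right)$ is nonsingular, where $A$ is the matrix with rows $d\M_i(p)$, $i=n,\ldots,m$. The crucial simplification is that, because $\x(p)=0$ (not merely its restriction), the Lagrange multipliers vanish, so the block coupling $L$ to the $\M_i$ reduces to the ambient $L$ itself; the kernel of this Jacobian then consists of pairs $(u,w)$ with $Au=0$ and $Lu={}^{t}\!A\,w$, i.e.\ with $u\in T_p\S^1(\o)$ and $Lu\in N_p^{\ast}\S^1(\o)$. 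Thus $\x_{|_{\S^1(\o)}}$ is non-degenerate at $p$ iff there is no nonzero $u\in T_p\S^1(\o)$ with $Lu\in N_p^{\ast}\S^1(\o)$.

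The core is a linear-algebra identity from Lemma \ref{lemadoscofatores}. At the zero $p$ one obtains $a_n\,d\M_i(p)=\sum_{j\in I_i}\cof(\o_n^j,M_i)\,d_x\x_j(p)={}^{t}\!L\,v_i$, where $I_i=\{1,\ldots,n-1,i\}$, $v_i=\sum_{j\in I_i}\cof(\o_n^j,M_i)\,e_j$, and $a_n\neq0$ (Remark \ref{obsmatrixM}). A Laplace expansion along the last column shows $\o_l(p)(v_i)=0$ for every $l=1,\ldots,n$: for $l\le n-1$ the pairing is a determinant with a repeated column, and for $l=n$ it equals $\M_i(p)=0$ since $p\in\S^1(\o)$. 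The $m-n+1$ vectors $v_i$ are independent (the coefficient of $e_i$ in $v_i$ is $\M(p)\neq0$), so by dimension count $U:=\langle v_n,\ldots,v_m\rangle$ is exactly the annihilator of $\langle\bar{\o}(p)\rangle$ in $T_pM$, and $N_p^{\ast}\S^1(\o)={}^{t}\!L\,U$. Using $({}^{t}\!L\,z)(u)=(Lu)(z)$ one then gets $u\in T_p\S^1(\o)\iff Lu\in\langle\bar{\o}(p)\rangle$; combining this with $Lu\in N_p^{\ast}\S^1(\o)$ shows $\{u\in T_p\S^1(\o):Lu\in N_p^{\ast}\S^1(\o)\}=\{u:Lu\in\langle\bar{\o}(p)\rangle\cap N_p^{\ast}\S^1(\o)\}=\Ker L$, the intersection being $\{0\}$ precisely because $p\in A_1(\o)$. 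Hence both non-degeneracies are equivalent to $\Ker L=\{0\}$, and the lemma follows.

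The main obstacle I anticipate is the second step: justifying cleanly that, once $\x(p)=0$ makes the multipliers vanish, the intrinsic Hessian of $\x_{|_{\S^1(\o)}}$ is exactly the ambient linearization $L$ restricted to $T_p\S^1(\o)$, so that no second-fundamental-form correction survives; this is where the full vanishing $\x(p)=0$ (rather than just $\x(p)\in N_p^{\ast}\S^1(\o)$) is indispensable, and it is supplied by the hypothesis $p\in A_1(\o)$. Once this and the cofactor identity exhibiting $N_p^{\ast}\S^1(\o)$ as ${}^{t}\!L$ applied to the annihilator of $\langle\bar{\o}(p)\rangle$ are in place, the transversality $\langle\bar{\o}(p)\rangle\cap N_p^{\ast}\S^1(\o)=\{0\}$ collapses the restricted condition to $\Ker L$ essentially for free, giving both implications of the equivalence simultaneously.
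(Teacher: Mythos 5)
Your argument is correct, and it shares the paper's two essential ingredients: the observation that $\x(p)=0$ (so the Szafraniec--Lagrange multipliers vanish and the bordered Jacobian's upper-left block is just the ambient linearization $L$), and the cofactor identity of Lemma \ref{lemadoscofatores}, which is exactly what converts $a_nd\M_i(p)$ into a combination of the rows $d_x\x_j(p)$. Where you genuinely diverge is in how the bordered matrix is analyzed. The paper works with determinants throughout: it row-reduces Matrix (\ref{matrizinicialxan}) to a block-triangular form whose lower-right block is $\M'$, and then needs a separate, fairly long computation (Matrices (\ref{matrizdeVseMis})--(\ref{matrizobtidadois})) to prove $\det\M'\neq0$ from the direct sum $\langle\bar{\o}(p)\rangle\oplus N_p^{\ast}\S^1(\o)=T_p^{\ast}M$. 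You instead compute the kernel of the bordered matrix and repackage the same cofactor identity as the dual statement $N_p^{\ast}\S^1(\o)={}^{t}\!L\bigl(\operatorname{Ann}\langle\bar{\o}(p)\rangle\bigr)$, which gives $u\in T_p\S^1(\o)\Leftrightarrow Lu\in\langle\bar{\o}(p)\rangle$ and lets the transversality $\langle\bar{\o}(p)\rangle\cap N_p^{\ast}\S^1(\o)=\{0\}$ collapse the kernel to $\Ker L$ in one line; the $\det\M'\neq0$ step disappears entirely, and both implications of the equivalence come out simultaneously. The only points worth making explicit in a write-up are (i) that $w=0$ follows from $u=0$ in the kernel description because the $d\M_i(p)$ are independent, and (ii) that your derivation of $\x(p)=0$ from $p\in A_1(\o)$ is precisely Lemma \ref{lemazerosrestricoes} with $k=0$, so you may simply cite it. Your approach is shorter and more conceptual; the paper's explicit matrix manipulations have the virtue of setting up the row-operation template that it then reuses in the substantially harder Lemma \ref{naodegkekmaisum}.
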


\begin{proof} Let $p\in A_1(\o)$ be a zero of the restriction $\x_{|_{\S^{1}(\o)}}$ and let $\mathcal{U}\subset M$ be an open neighborhood of $p$ at which $\M(x)\neq0$ 
and $\mathcal{U}\cap\S^1(\o)=\{x\in\mathcal{U}: \M_n(x)=\ldots=\M_m(x)=0\}$. 
By Szafraniec's characterization (\cite[p. 196]{sza2}), $\exists!(\lambda_n, \ldots,\lambda_m)\in\R^{m-n+1}$, such that $$\x(p)+\displaystyle\sum_{i=n}^{m}{\lambda_id\M_i(p)}=0.$$ Furthermore, $p$ is a non-degenerate zero of $\x_{|_{\S^{1}(\o)}}$ if and only if the matrix
\begin{equation}\label{matrizinicial}\left[\renewcommand{\arraystretch}{1.7}{\setlength{\arraycolsep}{0.1cm}{\begin{array}{ccccc}
                                                    & \vdots & \ffrac{\partial \M_n}{\partial x_1}(p) & \cdots & \ffrac{\partial \M_m}{\partial x_1}(p) \\
		\Jac\left(\x+\displaystyle\sum_{i=n}^{m}{\lambda_id\M_i}\right)(p) & \vdots & \vdots & \ddots & \vdots \\
                                                            & \vdots & \ffrac{\partial \M_n}{\partial x_m}(p) & \cdots & \ffrac{\partial \M_m}{\partial x_m}(p) \\
 \cdots \ \ \cdots \ \ \cdots \ \ \cdots \ \ \cdots \ \ \cdots  & \vdots  &\cdots \ \ \cdots&\cdots \ \ \cdots&\cdots \ \ \cdots\\		
			  d_x\M_n(p) & \vdots & & & \\
			  \vdots          & \vdots & & O_{(m-n+1)} & \\
 			  d_x\M_m(p) & \vdots & & & \\                      
\end{array}}}\right]
\end{equation} is non-singular. Since $\x(p)=0$, then $p\in\S^1(\o)\cap\mathcal{U}$ and $\lambda_nd\M_n(p)+\ldots+\lambda_md\M_m(p)=\vec{0}$, thus, $\lambda_n=\ldots=\lambda_m=0$ and, writing $\x=(\x_1, \ldots, \x_m)$, we have $$\Jac\left(\x+\displaystyle\sum_{i=n}^{m}{\lambda_id\M_i}\right)(p)=\left[\begin{array}{c}d_x \x_1(p)\\
\vdots\\
d_x \x_m(p)
\end{array}\right].$$ This means that the Matrix (\ref{matrizinicial}) is non-singular if and only if the matrix
\begin{equation}\label{matrizinicialxan}\left[\setlength{\arraycolsep}{0.1cm}{\renewcommand{\arraystretch}{1.6}{\begin{array}{ccccc}
d_x\x_1(p)           & \vdots & \ffrac{\partial \M_n}{\partial x_1}(p) & \cdots & \ffrac{\partial \M_m}{\partial x_1}(p) \\
		\vdots & \vdots & \vdots & \ddots & \vdots \\
  d_x\x_m(p)     & \vdots & \ffrac{\partial \M_n}{\partial x_m}(p) & \cdots & \ffrac{\partial \M_m}{\partial x_m}(p) \\
 \cdots \ \ \cdots \ \ \cdots \ \ \cdots  & \vdots  &\cdots \ \ \cdots&\cdots \ \ \cdots&\cdots \ \ \cdots\\		
			  a_nd_x\M_n(p) & \vdots & & & \\
			  \vdots          & \vdots & & O_{(m-n+1)} & \\
 			  a_nd_x\M_m(p) & \vdots & & & \\                      
\end{array}}}\right]
\end{equation} is non-singular (by Remark \ref{obsmatrixM}, $a_n\neq0$). By Equation (\ref{expMi}), $$d_x \M_i(x)=\displaystyle\sum_{\l\in I, j\in I_i}{\cof(\o_{\l}^j(x),M_i)d \o_{\l}^j(x)},$$ and applying 
Lemma \ref{lemadoscofatores}, we obtain
\begin{equation*}\renewcommand{\arraystretch}{2}{\begin{array}{cl}
a_nd_x \M_i(p)&=\displaystyle\sum_{\l\in I, j\in I_i}{a_{\l}\cof(\o_{n}^j(p),M_i)d \o_{\l}^j(p)}\\
                 &=\displaystyle\sum_{j\in I_i}{\cof(\o_{n}^j(p),M_i)\left[\displaystyle\sum_{\l\in I}a_{\l}d \o_{\l}^j(p)\right]}\\
                 &=\displaystyle\sum_{j\in I_i}{\cof(\o_{n}^j(p),M_i)\left[d_x\x_j(p)\right]}.
\end{array}}
\end{equation*}

Let us denote the $m$ first row vectors of Matrix (\ref{matrizinicialxan}) by $L_j, j=1, \ldots, m,$ and let us denote the $m-n+1$ last row vectors of Matrix (\ref{matrizinicialxan}) by $R_i, i=n, \ldots, m$: $$\renewcommand{\arraystretch}{3}{\begin{array}{ccl}
L_j&=&\left(d_x \x_j(p), \ffrac{\partial\M_n}{\partial x_j}(p), \ldots, \ffrac{\partial\M_m}{\partial x_j}(p)\right);\\
R_i&=&\left(a_n\ffrac{\partial\M_i}{\partial x_1}(p), \ldots, a_n\ffrac{\partial\M_i}{\partial x_m}(p), \vec{0}\right).
\end{array}}$$ We replace each row vector $R_i$, $i=n, \ldots, m$, by $R_i-\sum_{j\in I_i}{\cof(\o_n^j, M_i)L_j}$ so that we obtain $$R_i=\left(\underbrace{0,\ldots\ 0}_{m \text{ times }},-\sum_{j\in I_i}{\cof(\o_n^j,M_i)\ffrac{\partial\M_n}{\partial x_j}}, \ldots, -\sum_{j\in I_i}{\cof(\o_n^j,M_i)\ffrac{\partial\M_m}{\partial x_j}}\right)$$ and the Matrix (\ref{matrizinicialxan}) becomes:
\begin{equation}\label{matrizcomMlinha}\left[\setlength{\arraycolsep}{0.1cm}{\begin{array}{ccccc}
d_x\x_1(p)           & \vdots & \ffrac{\partial \M_n}{\partial x_1}(p) & \cdots & \ffrac{\partial \M_m}{\partial x_1}(p) \\
		\vdots & \vdots & \vdots & \ddots & \vdots \\
  d_x\x_m(p)     & \vdots & \ffrac{\partial \M_n}{\partial x_m}(p) & \cdots & \ffrac{\partial \M_m}{\partial x_m}(p) \\
 \cdots \ \ \cdots \ \ \cdots  & \vdots  &\cdots \ \ \cdots&\cdots \ \ \cdots&\cdots \ \ \cdots\\		
			  & \vdots & & & \\
			  O_{(m-n+1)\times m}          & \vdots & & \M'_{(m-n+1)} & \\
 			  & \vdots & & & \\                      
\end{array}}\right]
\end{equation} where $$\M'_{(m-n+1)}=-\left(\begin{array}{c}m_{ij}\end{array}\right)_{n\leq i,j\leq m}$$ is the matrix given by \begin{equation}\label{eqdosmij} m_{ij}=\sum_{k\in I_i}{\cof(\o_n^k,M_i)\ffrac{\partial \M_j}{\partial x_k}}, \, i,j=n, \ldots, m.\end{equation} 

Next, we will verify that the matrix $\M'$ is non-singular. Since $p\in A_1(\o)$, then $\dim(\langle\bar{\o}(p)\rangle \cap N_p^{\ast}\S^1(\o))=0$ and $\dim(\langle\bar{\o}(p)\rangle \oplus N_p^{\ast}\S^1(\o))=m.$ Since $\M(p)\neq0$, $\{\o_1(p), \ldots, \o_{n-1}(p)\}$ is a basis of the space $\langle\bar{\o}(p)\rangle $. Thus the matrix 
\begin{equation}\label{matrizdeVseMis}\left[\renewcommand{\arraystretch}{2}{\begin{array}{cccccc}
\o_1^1(p)  & \cdots & \o_1^{n-1}(p)& \o_1^{n}(p)& \cdots & \o_1^{m}(p)\\
\vdots & \ddots & \vdots   & \vdots & \ddots & \vdots\\
\o_{n-1}^1(p)  & \cdots & \o_{n-1}^{n-1}(p)& \o_{n-1}^{n}(p)& \cdots & \o_{n-1}^{m}(p)\\
\ffrac{\partial\M_n}{\partial x_1}(p) & \cdots & \ffrac{\partial\M_n}{\partial x_{n-1}}(p) & \ffrac{\partial\M_n}{\partial x_n}(p) & \cdots & \ffrac{\partial\M_n}{\partial x_m}(p)\\
\vdots & \ddots & \vdots & \vdots & \ddots & \vdots\\
\ffrac{\partial\M_m}{\partial x_1}(p) & \cdots & \ffrac{\partial\M_m}{\partial x_{n-1}}(p) & \ffrac{\partial\M_m}{\partial x_n}(p) & \cdots & \ffrac{\partial\M_m}{\partial x_m}(p)
\end{array}}\right]
\end{equation} has rank maximal. Let us denote the row vectors of Matrix (\ref{matrizdeVseMis}) by $L_j', j=1, \ldots, m$. For $j=1, \ldots, n-1$, we replace $L_j'$ by \begin{equation}\label{novaslinhas} \sum_{k=1}^{n-1}{\cof(\o_k^{j},M)L_k'}=\left(\sum_{k=1}^{n-1}{\cof(\o_k^{j},M)\o_k^1}, \ldots, \sum_{k=1}^{n-1}{\cof(\o_k^{j},M)\o_k^m}\right),\end{equation} where  
\begin{equation*} \sum_{k=1}^{n-1}{\cof(\o_k^j,M)\o_k^{\l}}=\left\{\begin{array}{ll}
\M, & \l=j;\\
0  & \l=1, \ldots,n-1 \text{ and } \l\neq j;\\
-\cof(\o_n^{j},\M_{\l}), & \l=n, \ldots, m.
\end{array}\right.
\end{equation*} Indeed,
\begin{itemize}
\item For $\l=1, \ldots,n-1$ with $\l=j$, we have: $$\sum_{k=1}^{n-1}{\cof(\o_k^{j},M)\o_k^j}=\left| \setlength{\arraycolsep}{0.12cm}{\renewcommand{\arraystretch}{1.5}{\begin{array}{ccccc}
\o_1^1 & \cdots & \o_k^1 & \cdots & \o_{n-1}^1\\
\vdots & \ddots & \vdots & \ddots & \vdots\\
\o_1^{j} & \cdots & \o_k^{j} & \cdots & \o_{n-1}^{j}\\
\vdots & \ddots & \vdots & \ddots & \vdots\\
\o_1^{n-1} & \cdots & \o_k^{n-1} & \cdots & \o_{n-1}^{n-1}\\
\end{array}}}\right|=\M;$$ 
\item For $\l=1, \ldots,n-1$ and $\l\neq j$, we have: $$\sum_{k=1}^{n-1}{\cof(\o_k^{j},M)\o_k^{\l}}=\left| \setlength{\arraycolsep}{0.12cm}{\begin{array}{ccccc}
\o_1^1 & \cdots & \o_k^1 & \cdots & \o_{n-1}^1\\
\vdots & \ddots & \vdots & \ddots & \vdots\\
\o_1^{j-1} & \cdots & \o_k^{j-1} & \cdots & \o_{n-1}^{j-1}\\
\o_1^{\l} & \cdots & \o_k^{\l} & \cdots & \o_{n-1}^{\l}\\
\o_1^{j+1} & \cdots & \o_k^{j+1} & \cdots & \o_{n-1}^{j+1}\\
\vdots & \ddots & \vdots & \ddots & \vdots\\
\o_1^{n-1} & \cdots & \o_k^{n-1} & \cdots & \o_{n-1}^{n-1}\\
\end{array}}\right|=0,$$ because this is the determinant of a matrix with two equal rows.
\item For $\l=n, \ldots, m$, we have:  $$\cof(\o_n^{j},\M_{\l})=(-1)^{n+j}\left|\renewcommand{\arraystretch}{1.3}{\setlength{\arraycolsep}{3mm}{\begin{array}{ccc}
\o_1^1 & \cdots & \o_{n-1}^1\\
\vdots & \ddots & \vdots \\
\o_1^{j-1} & \cdots & \o_{n-1}^{j-1}\\
\o_1^{j+1} & \cdots & \o_{n-1}^{j+1}\\
\vdots & \ddots & \vdots\\
\o_1^{n-1} & \cdots & \o_{n-1}^{n-1}\\
\o_1^{\l} & \cdots & \o_{n-1}^{\l}\\
\end{array}}}\right|$$
$$\renewcommand{\arraystretch}{2.6}{\begin{array}{l}
=(-1)^{n+j}\displaystyle\sum_{k=1}^{n-1}{\o_{k}^{\l}(-1)^{n-1+k}}\left|\renewcommand{\arraystretch}{1.5}{\setlength{\arraycolsep}{2mm}{
\begin{array}{cccccc}
\o_1^1 & \cdots & \o_{k-1}^{1} & \o_{k+1}^{1} & \cdots & \o_{n-1}^1\\
\vdots & \ddots & \vdots & \vdots & \ddots & \vdots \\
\o_1^{j-1} & \cdots & \o_{k-1}^{j-1} & \o_{k+1}^{j-1} & \cdots & \o_{n-1}^{j-1}\\
\o_1^{j+1} & \cdots & \o_{k-1}^{j+1} & \o_{k+1}^{j+1} & \cdots & \o_{n-1}^{j+1}\\
\vdots & \ddots & \vdots & \vdots & \ddots & \vdots \\
\o_1^{n-1} & \cdots & \o_{k-1}^{n-1} & \o_{k+1}^{n-1} & \cdots & \o_{n-1}^{n-1}\\\end{array}}}\right|\\
= \displaystyle\sum_{k=1}^{n-1}{(-1)^{j-1+k}\o_{k}^{\l}(-1)^{j+k}\cof(\o_k^{j},M)} = -\displaystyle\sum_{k=1}^{n-1}{\cof(\o_k^{j},M)\o_{k}^{\l}}\end{array}}$$
\end{itemize}

Thus, replacing the rows $L_j'$, for $j=1, \ldots, n-1$, Matrix (\ref{matrizdeVseMis}) becomes
\begin{equation}\label{matrizobtida}\left[\begin{array}{ccccccc}
\M  & \cdots & 0 & \vdots & -\cof(\o_n^1,\M_n)& \cdots & -\cof(\o_n^1,\M_m)\\
\vdots & \ddots & \vdots & \vdots & \vdots & \ddots & \vdots\\ 
0 & \cdots & \M & \vdots & -\cof(\o_n^{n-1},\M_n)& \cdots & -\cof(\o_n^{n-1},\M_m)\\
\cdots \ & \cdots & \cdots & \vdots &  \cdots \ \ \ \cdots \ \ \ \cdots & \cdots & \cdots \ \ \ \cdots \ \ \ \cdots\\ 
\ffrac{\partial \M_n}{\partial x_1} & \cdots & \ffrac{\partial \M_n}{\partial x_{n-1}} & \vdots & \ffrac{\partial \M_n}{\partial x_p}& \cdots & \ffrac{\partial \M_n}{\partial x_m}\\
\vdots & \ddots & \vdots & \vdots & \vdots & \ddots & \vdots\\ 
\ffrac{\partial \M_m}{\partial x_1} & \cdots & \ffrac{\partial \M_m}{\partial x_{n-1}} & \vdots & \ffrac{\partial \M_m}{\partial x_p}& \cdots & \ffrac{\partial \M_m}{\partial x_m}\\
\end{array}\right].
\end{equation} that still has maximal rank. {Let us denote the first $n-1$ row vectors of Matrix (\ref{matrizobtida}) by $L_j''$, for $j=1, \ldots, n-1$,} and let us consider the following expression for $j=n,\ldots, m$,
\footnotesize \begin{equation*}\renewcommand{\arraystretch}{3}{\begin{array}{l} \M L_j'-\displaystyle\sum_{k=1}^{n-1}{\ffrac{\partial\M_j}{\partial x_k}L_k''}\\
=\M\left(\ffrac{\partial \M_j}{\partial x_1}, \ldots, \ffrac{\partial \M_j}{\partial x_{n-1}}, \ffrac{\partial \M_j}{\partial x_n}, \ldots, \ffrac{\partial \M_j}{\partial x_m}\right) \\
+\left(-\M\ffrac{\partial \M_j}{\partial x_1}, \ldots, -\M\ffrac{\partial \M_j}{\partial x_{n-1}}, \displaystyle\sum_{k=1}^{n-1}\ffrac{\partial \M_j}{\partial x_k}\cof(\o_n^k,M_n), \ldots, \displaystyle\sum_{k=1}^{n-1}\ffrac{\partial \M_j}{\partial x_k}\cof(\o_n^k,M_m) \right)\\
=\left(0, \ldots, 0, \displaystyle\sum_{k=1}^{n-1}\ffrac{\partial \M_j}{\partial x_k}\cof(\o_n^k,M_n)+\M\ffrac{\partial \M_j}{\partial x_n}, \ldots, \displaystyle\sum_{k=1}^{n-1}\ffrac{\partial \M_j}{\partial x_k}\cof(\o_n^k,M_m)+\M\ffrac{\partial \M_j}{\partial x_m} \right).\\
\end{array}}
\end{equation*} \normalsize Note that $\M=\cof(\o_n^i,\M_i)$, for $i=n, \ldots,m$, so that the expression $$\M L_j'-\displaystyle\sum_{k=1}^{n-1}{\ffrac{\partial\M_j}{\partial x_k}L_k''}$$ is equal to $$\left(0, \ldots, 0, \displaystyle\sum_{k\in I_n}\ffrac{\partial \M_j}{\partial x_k}\cof(\o_n^k,M_n), \ldots, \displaystyle\sum_{k\in I_m}\ffrac{\partial \M_j}{\partial x_k}\cof(\o_n^k,M_m)\right).$$ \normalsize Thus, by Equation (\ref{eqdosmij}), we obtain $$\M L_j'-\displaystyle\sum_{k=1}^{n-1}{\ffrac{\partial\M_j}{\partial x_k}L_k''}=(0, \ldots, 0, m_{nj}, \ldots, m_{mj}).$$

For $j=n,\ldots, m$, we replace the row $L_j'$ in Matrix (\ref{matrizobtida}) by $$\M L_j'-\displaystyle\sum_{k=1}^{n-1}{\ffrac{\partial\M_j}{\partial x_k}L_k''},$$ such that the matrix obtained: 
\begin{equation}\label{matrizobtidadois}\left[\setlength{\arraycolsep}{0.1cm}{\begin{array}{ccccccc}
\M  & \cdots & 0 & \vdots & -\cof(\o_n^1,\M_n)& \cdots & -\cof(\o_n^1,\M_m)\\
\vdots & \ddots & \vdots & \vdots & \vdots & \ddots & \vdots\\ 
0 & \cdots & \M & \vdots & -\cof(\o_n^{n-1},\M_n)& \cdots & -\cof(\o_n^{n-1},\M_m)\\
\cdots \ & \cdots & \cdots & \vdots &  \cdots \ \ \ \  \cdots \ \ \ \ \cdots & \cdots & \cdots \ \ \ \ \cdots \ \ \ \ \cdots\\ 
 &  &  & \vdots & &  & \\
 & O_{(n-1)} &  & \vdots &  & (-\M')^t & \\ 
& &  & \vdots & &  &\\
\end{array}}\right]
\end{equation} also is non-singular. So, since $\M\neq0$, we have that $\det\M'\neq0.$

Thus, we can conclude that Matrix (\ref{matrizinicialxan}) is non-singular if and only if Matrix (\ref{matrizcomMlinha}) is non-singular, which occurs if and only if $$\det\left[\begin{array}{c}
d_x\x_1(p) \\
		\vdots \\
  d_x\x_m(p) \\
\end{array}\right]\neq0.$$ Therefore, $p$ will be a non-degenerate zero of $\x_{|_{\S^1(\o)}}$ if and only if $p$ is a non-degenerate zero of $\x$. \end{proof}

\begin{lema}\label{naodegkekmaisum} Let $p\in A_{k+1}(\o)$ be a zero of $\x_{|_{\S^{k+1}(\o)}}$. Then, for almost every $a\in\R^n\setminus\{\vec{0}\}$, $p$ is a non-degenerate zero of $\x_{|_{\S^{k+1}(\o)}}$ if and only if $p$ is a non-degenerate zero of $\x_{|_{\S^{k}(\o)}}$.
\end{lema}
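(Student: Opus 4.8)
The plan is to treat this as the inductive step generalizing Lemma~\ref{nondegenerateequivalenceA1} (the case $k=0$), reducing everything to a comparison of two bordered Hessians that differ by a single bordering direction. First I would fix, via Lemmas~\ref{eqlocalSk} and \ref{eqloc3}, local equations on a neighborhood $\mathcal U$ of $p$,
$$\mathcal U\cap\S^{k}(\o)=\{x\mid G_1=\cdots=G_{m-n+k}=0\},\qquad \mathcal U\cap\S^{k+1}(\o)=\{x\mid G_1=\cdots=G_{m-n+k}=\d_{k+1}=0\},$$
where $G_1,\dots,G_{m-n+k}$ abbreviate $\M_n,\dots,\M_m,\d_2,\dots,\d_k$ and the displayed differentials are independent at every point of the respective sets. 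Thus $\S^{k+1}(\o)$ is cut out of $\S^{k}(\o)$ by the single extra equation $\d_{k+1}=0$, and since $p\in\S^{k+1}(\o)$ we have $N_p^{\ast}\S^{k+1}(\o)=N_p^{\ast}\S^{k}(\o)\oplus\langle d\d_{k+1}(p)\rangle$, so that $\phi:=d\d_{k+1}(p)|_{T_p\S^{k}(\o)}\neq0$ and $T_p\S^{k+1}(\o)=\ker\phi$ is a hyperplane of $T_p\S^{k}(\o)$.

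The key preliminary reduction is that the two problems share the same Lagrangian. By Lemma~\ref{lemazerosrestricoes}, $p$ is at once a zero of $\x_{|_{\S^{k}(\o)}}$ and of $\x_{|_{\S^{k+1}(\o)}}$, hence $\x(p)\in N_p^{\ast}\S^{k}(\o)$; applying Szafraniec's characterization on $\S^{k+1}(\o)$ and using the independence of $G_1,\dots,G_{m-n+k},d\d_{k+1}$ at $p$, the coefficient of $d\d_{k+1}(p)$ in the unique expression $\x(p)=\sum_j\lambda_j\,dG_j(p)+\nu\,d\d_{k+1}(p)$ must vanish, $\nu=0$. Consequently both non-degeneracy conditions are the non-singularity of a bordered Hessian with the \emph{same} top-left block $A=\Jac\bigl(\x+\sum_j\lambda_j\,dG_j\bigr)(p)$: for $\x_{|_{\S^{k}(\o)}}$ one borders $A$ with the columns $dG_1,\dots,dG_{m-n+k}$, while for $\x_{|_{\S^{k+1}(\o)}}$ one borders with these same columns together with the extra column $d\d_{k+1}$. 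By the standard Schur-complement identity for bordered symmetric matrices with independent bordering vectors, non-singularity of each of these matrices is equivalent to the non-degeneracy of the restriction of $A$ to the corresponding tangent space. Writing $Q:=A|_{T_p\S^{k}(\o)}$, the statement therefore reduces to the assertion that $Q$ is non-degenerate on $T_p\S^{k}(\o)$ if and only if its restriction to the hyperplane $\ker\phi=T_p\S^{k+1}(\o)$ is non-degenerate.

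This last equivalence is where genericity is indispensable, since it is false for a general symmetric form and a general hyperplane. Writing $H_k,H_{k+1}$ for the two bordered Hessians, the adjugate identity $\det H_{k+1}=-\,u^{\mathsf T}\mathrm{adj}(H_k)\,u$ (with $u$ the extra border of $H_{k+1}$) specializes, when $H_k$ is invertible, to $\det H_{k+1}=-\det H_k\cdot Q^{-1}(\phi,\phi)$, where $Q^{-1}$ is the dual form; so once $\x_{|_{\S^{k}(\o)}}$ is non-degenerate the passage to $\S^{k+1}(\o)$ is controlled exactly by the scalar $Q^{-1}(\phi,\phi)$, and a parallel direct analysis of the null line of $Q$ handles the case $\det H_k=0$. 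Both failure modes—$Q$ non-degenerate with $Q^{-1}(\phi,\phi)=0$, and $Q$ degenerate with its null line transverse to $\ker\phi$—are thus single determinantal conditions in general position.

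To close the argument I would remove these failure modes for almost every $a$ by a Sard argument of the type already used in Lemmas~\ref{ptscrticrestasigmak} and \ref{zerosnaodegdeak}: assemble the incidence set of $(x,a,\lambda)$ with $x\in A_{k+1}(\o)$ a zero of $\x_{|_{\S^{k+1}(\o)}}$ (encoded by the constraint equations together with the Szafraniec relations $N_s=0$), adjoined with the scalar equation expressing the relevant mismatch; verify that $\vec0$ is a regular value, using Lemma~\ref{lemadorank} to pass from full rank in $(x,a)$ to full rank in $x$ exactly as in Lemma~\ref{nondegeneratexi}; and project onto the $a$-factor, whose generic fiber is then empty. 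For such $a$ both restricted forms are non-degenerate at every zero $p\in A_{k+1}(\o)$, which yields the desired equivalence. The hard part will be precisely this transversality verification—showing that imposing the general-position condition between the common Hessian $A$ and the hyperplane $\ker\phi$ genuinely costs one dimension, so that Sard's theorem delivers a full-measure set of admissible $a$.
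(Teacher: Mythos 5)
Your setup is sound and matches the paper's up to the decisive step: you correctly identify that $\beta_{k+1}=0$ (your $\nu=0$), so that both non-degeneracy conditions are the non-singularity of bordered Hessians sharing the same block $\Jac_x\bigl(\x+\sum\lambda_i d\M_i+\sum\beta_j d\d_j\bigr)(p)$, differing only by the extra border $d\d_{k+1}$. But at the heart of the lemma you leave a genuine gap: you reduce everything to the claim that, generically, $Q$ is non-degenerate on $T_p\S^{k}(\o)$ iff its restriction to the hyperplane $\ker\phi$ is, and you propose to secure this by an additional Sard argument whose transversality verification you explicitly defer (``the hard part will be precisely this transversality verification''). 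That verification involves second-order data of the Hessian and is not carried out; as stated, the proposal does not prove the equivalence. Moreover, your sketch of the degenerate case ($\det H_k=0$) is not correct as a generality: if the radical of $Q$ is spanned by a vector $v\notin\ker\phi$, the restriction $Q|_{\ker\phi}$ can perfectly well be non-degenerate, so ``a parallel direct analysis of the null line'' does not by itself close the if-and-only-if.

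The paper avoids all of this: no extra genericity is needed at this step. Writing $\x(p)=\sum_{i}\mu_i\z_i(p)+\sum\tilde{\lambda_i}d\M_i(p)+\sum\tilde{\b_j}d\d_j(p)$ and collecting the forms $\z_i, d\M_i, d\d_j$ into columns $A_1,\dots,A_m$ with $\d_{k+1}=\pm\det(A_1,\dots,A_m)$, the Szafraniec relation $\sum_i\a_iA_i(p)=0$ together with the cofactor identities of Lemma \ref{lematecnico} lets one replace the row $d_x\d_{k+1}$ by $(\vec{0},\dots,\vec{0},\tilde{\g}_{k+1})$ via row operations with the \emph{nonzero} multiplier $\a_1$ (nonzero because $\x_{|_{\S^{k-1}(\o)}}(p)\neq0$ for almost every $a$, by Lemma \ref{ptscrticrestasigmak} --- this is the only place genericity enters). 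Hence $\det H_{k+1}=\pm\a_1^{-1}\tilde{\g}_{k+1}\det H_k$, and $\tilde{\g}_{k+1}\neq0$ is proved \emph{structurally}: if it vanished, one would get $\dim\bigl(\langle\z_1(p),\dots,\z_{n-k}(p)\rangle\cap N_p^{\ast}\S^{k+1}(\o)\bigr)\geq2$ and hence $p\in\S^{k+2}(\o)$, contradicting $p\in A_{k+1}(\o)$. In your language this says $Q^{-1}(\phi,\phi)$ equals a specific nonzero constant forced by the geometry of the Morin stratification --- it is not a quantity one needs (or should try) to control by a further Sard argument. You would need to supply this computation, or an equivalent structural argument, for your proof to be complete.
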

\begin{proof} Let $p\in A_{k+1}(\o)$ be a zero of $\x_{|_{\S^{k+1}(\o)}}$ and let $\mathcal{U}\subset M$ be an open neighborhood of $p$ at which $\M(x)\neq0$ and the singular sets $\S^k(\o)$ ($k=2,\ldots,n$) are defined by $\mathcal{U}\cap\S^k(\o)=\{x\in\mathcal{U}: \M_n(x)=\ldots=\M_m(x)=\d_2(x)=\ldots=\d_k(x)=0\}$. 
By Szafraniec's characterization (\cite[p. 196]{sza2}), $p$ is a zero of the restriction $\x_{|_{\S^{k+1}(\o)}}$ if and only if there exists an unique $(\lambda_n,\ldots,\lambda_m,\beta_2,\ldots,\beta_{k+1})\in\R^{m-n+k+1}$ such that 
\begin{equation*} \x(p)+\displaystyle\sum_{j=n}^{m}{\lambda_id\M_i(p)}+\displaystyle\sum_{j=2}^{k+1}{\beta_{j}d\d_{j}(p)}=0.
\end{equation*} Since $p$ is a zero of $\x_{|_{\S^{k}(\o)}}$, we have $\b_{k+1}=0$. Moreover, also by Szafraniec's characterization, $p$ is a non-degenerate zero of $\x_{|_{\S^{k+1}(\o)}}$ if and only if the determinant of the following matrix does not vanish at $p$: 
\footnotesize
\begin{equation}\label{matrizinicialcasok}\left[\setlength{\arraycolsep}{0.05cm}{\begin{array}{ccccccccc}
\multirow{3}{*}{$\Jac_{x}\left(\x+\displaystyle\sum_{i=n}^{m}{\lambda_id\M_i}+\displaystyle\sum_{j=2}^{k}{\beta_{j}d\d_{j}}\right)$} & \vdots & \ffrac{\partial \M_n}{\partial x_1} & \cdots & \ffrac{\partial \M_m}{\partial x_1} & \ffrac{\partial \d_2}{\partial x_1} & \cdots & \ffrac{\partial \d_k}{\partial x_1}& \ffrac{\partial \d_{k+1}}{\partial x_1} \\
		 & \vdots & \vdots & \ddots & \vdots & \vdots & \ddots & \vdots & \vdots\\
  & \vdots & \ffrac{\partial \M_n}{\partial x_m} & \cdots & \ffrac{\partial \M_m}{\partial x_m} & \ffrac{\partial \d_2}{\partial x_m} & \cdots & \ffrac{\partial \d_k}{\partial x_m}& \ffrac{\partial \d_{k+1}}{\partial x_m}  \\
			  \cdots \ \ \cdots \ \ \cdots \ \ \cdots \ \ \cdots \ \ \cdots \ \ \cdots \ \ \cdots & \vdots & \cdots & \cdots & \cdots \ \ \cdots & \cdots & \cdots & \cdots & \cdots \ \ \cdots\\
			  d_x\M_n & \vdots & & & &&&&\\
			  \vdots          & \vdots & & && && \\
 			  d_x\M_m & \vdots & & & &&& \\       
  		   d_x\d_2 & \vdots & &  \multicolumn{5}{c}{O_{(m-n+k+1)}} &\\
  		   \vdots       & \vdots & &  &&&& \\
 			   d_x\d_{k+1} & \vdots & & & &&&\\     
\end{array}}\right].
\end{equation}
\normalsize

Analogously, $p$ is a non-degenerate zero of $\x_{|_{\S^k(\o)}}$ if and only if the determinant of the following matrix does not vanish at $p$: 
\footnotesize
\begin{equation}\label{matrizinicialsemdeltakmais1}\left[\setlength{\arraycolsep}{0.05cm}{\begin{array}{cccccccc}
 & \vdots & \ffrac{\partial \M_n}{\partial x_1} & \cdots & \ffrac{\partial \M_m}{\partial x_1} & \ffrac{\partial \d_2}{\partial x_1} & \cdots & \ffrac{\partial \d_k}{\partial x_1} \\
		\Jac_x\left(\x+\displaystyle\sum_{i=n}^{m}{\lambda_id\M_i}+\displaystyle\sum_{j=2}^{k}{\beta_{j}d\d_{j}}\right) & \vdots & \vdots & \ddots & \vdots & \vdots & \ddots & \vdots \\
  & \vdots & \ffrac{\partial \M_n}{\partial x_m} & \cdots & \ffrac{\partial \M_m}{\partial x_m} & \ffrac{\partial \d_2}{\partial x_m} & \cdots & \ffrac{\partial \d_k}{\partial x_m}  \\
  \cdots \ \ \cdots \ \ \cdots \ \ \cdots \ \ \cdots \ \ \cdots \ \ \cdots \ \ \cdots & \vdots & \cdots & \cdots & \cdots \ \ \cdots & \cdots & \cdots & \cdots \\
			  d_x\M_n & \vdots & & & &&&\\
			  \vdots          & \vdots & & && & \\
 			  d_x\M_m & \vdots & & & && \\       
  		   d_x\d_2 & \vdots & \multicolumn{5}{c}{O_{(m-n+k)}} &   \\
  		   \vdots       & \vdots & &  &&& \\
 			   d_x\d_k & \vdots & & & &&\\                
\end{array}}\right].
\end{equation} \normalsize 
Thus, to prove the lemma it is enough to show that the Matrix (\ref{matrizinicialcasok}) is non-singular at $p$ if and only if the Matrix (\ref{matrizinicialsemdeltakmais1}) is non-singular at $p$.

Note that the Jacobian matrix with respect to $x$ \begin{equation}\label{Jacmatrix}\Jac_x\left(\x+\displaystyle\sum_{i=n}^{m}{\lambda_id\M_i}+\displaystyle\sum_{j=2}^{k}{\beta_{j}d\d_{j}}\right)\end{equation} is a submatrix of both the Matrices (\ref{matrizinicialcasok}) and (\ref{matrizinicialsemdeltakmais1}). And remember that, {for $x$ in an open neighborhood of $p$}, $\d_{k+1}=\det(d\M_n, \ldots, d\M_m,d\d_2,\ldots, d\d_k, \z_1,\ldots,\z_{n-k}),$ where $\{\z_1(x),\ldots,\z_{n-k}(x)\}$ is a basis of a vector subspace supplementary to $\langle\bar{\o}(x)\rangle \cap N_x^{\ast}\S^{k-1}(\o)$ in $\langle\bar{\o}(x)\rangle $. Thus, $$\langle\bar{\o}(x)\rangle =\langle\z_1(x),\ldots,\z_{n-k}(x)\rangle \oplus(\langle\bar{\o}(x)\rangle \cap N_x^{\ast}\S^{k-1}(\o)).$$ 

Since, for almost every $a$, $\x_{|_{\S^{k-1}(\o)}}(p)\neq0$ then $\x(p)\in\langle\bar{\o}(p)\rangle \setminus N_p^{\ast}\S^{k-1}(\o)$. {That is, there exists $(\mu_1, \ldots, \mu_{n-k})\in\R^{n-k}\setminus\{\vec{0}\}$ so that $$\x(p)=\displaystyle\sum_{i=1}^{n-k}{\mu_i\z_i(p)}+\f(p),$$ for some $\f(p)\in N_p^{\ast}\S^{k-1}(\o)$, where $\f(p)=\displaystyle\sum_{i=n}^{m}{\tilde{\lambda_i}d\M_i(p)}+\displaystyle\sum_{j=2}^{k-1}{\tilde{\b_j}d\d_j(p)}$. Thus, 
\begin{equation}\label{eqmi}
\x(p)=\displaystyle\sum_{i=1}^{n-k}{\mu_i\z_i(p)}+\displaystyle\sum_{i=n}^{m}{\tilde{\lambda_i}d\M_i(p)}+\displaystyle\sum_{j=2}^{k-1}{\tilde{\b_j}d\d_j(p)},
\end{equation} and the expression $$\x(p)+\displaystyle\sum_{j=n}^{m}{\lambda_id\M_i(p)}+\displaystyle\sum_{j=2}^{k}{\beta_{j}d\d_{j}(p)}$$ can be written as: \begin{equation}\label{expressaodozmais}\displaystyle\sum_{i=1}^{n-k}{\mu_i\z_i(p)}+\displaystyle\sum_{i=n}^{m}{(\lambda_i+\tilde{\lambda_i})d\M_i(p)}+\displaystyle\sum_{j=2}^{k-1}{(\b_j+\tilde{\b_j})d\d_j(p)}+\b_k d\d_k(p).\end{equation}} 

{Let us consider the mapping $$H(x)=\displaystyle\sum_{i=1}^{n-k}{\mu_i\z_i(x)}+\displaystyle\sum_{i=n}^{m}{(\lambda_i+\tilde{\lambda_i})d\M_i(x)}+\displaystyle\sum_{j=2}^{k-1}{(\b_j+\tilde{\b_j})d\d_j(x)}+\b_k d\d_k(x),$$ defined on an open neighborhood $\mathcal{U}\subset M$ of $p$, which is equal to $$ 
\resizebox{!}{1.15cm}{$
\displaystyle\sum_{i=1}^{n-k}{\mu_i\left[
\begin{array}{c}
\z_{i}^1\\
\vdots\\
\z_{i}^m
\end{array}\right]}+\displaystyle\sum_{i=n}^{m}{(\lambda_i+\tilde{\lambda_i})\left[\begin{array}{c}
\ffrac{\partial\M_i}{\partial x_1}\\
\vdots\\
\ffrac{\partial\M_i}{\partial x_m}
\end{array}\right]}+\displaystyle\sum_{j=2}^{k-1}{(\b_j+\tilde{\b_j})\left[\begin{array}{c}
\ffrac{\partial\d_j}{\partial x_1}\\
\vdots\\
\ffrac{\partial\d_j}{\partial x_m}
\end{array}\right]}+\b_k\left[\begin{array}{c}
\ffrac{\partial\d_k}{\partial x_1}\\
\vdots\\
\ffrac{\partial\d_k}{\partial x_m}
\end{array}\right]$}
$$ and can be written as $$\left[\begin{array}{c}\displaystyle\sum_{i=1}^{n-k}{\mu_i\z_{i}^1}+\displaystyle\sum_{i=n}^{m}{(\lambda_i+\tilde{\lambda_i})\ffrac{\partial\M_i}{\partial x_1}}+\displaystyle\sum_{j=2}^{k-1}{(\b_j+\tilde{\b_j})\ffrac{\partial\d_j}{\partial x_1}}+\b_k\ffrac{\partial\d_k}{\partial x_1}\\
\vdots\\
\displaystyle\sum_{i=1}^{n-k}{\mu_i\z_i^m}+\displaystyle\sum_{i=n}^{m}{(\lambda_i+\tilde{\lambda_i})\ffrac{\partial\M_i}{\partial x_m}}+\displaystyle\sum_{j=2}^{k-1}{(\b_j+\tilde{\b_j})\ffrac{\partial\d_j}{\partial x_m}}+\b_k\ffrac{\partial\d_k}{\partial x_m}
\end{array}\right].$$ Then, the Jacobian matrix of $H(x)$ is given by:}
\footnotesize
\begin{equation}\label{matjacdaexpressao}\left[\begin{array}{c}\displaystyle\sum_{i=1}^{n-k}{\mu_id_x\z_{i}^1}+\displaystyle\sum_{i=n}^{m}{(\lambda_i+\tilde{\lambda_i})d_x\ffrac{\partial\M_i}{\partial x_1}}+\displaystyle\sum_{j=2}^{k-1}{(\b_j+\tilde{\b_j})d_x\ffrac{\partial\d_j}{\partial x_1}}+\b_kd_x\ffrac{\partial\d_k}{\partial x_1}\\
\vdots\\
\displaystyle\sum_{i=1}^{n-k}{\mu_id_x\z_{i}^m}+\displaystyle\sum_{i=n}^{m}{(\lambda_i+\tilde{\lambda_i})d_x\ffrac{\partial\M_i}{\partial x_m}}+\displaystyle\sum_{j=2}^{k-1}{(\b_j+\tilde{\b_j})d_x\ffrac{\partial\d_j}{\partial x_m}}+\b_kd_x\ffrac{\partial\d_k}{\partial x_m}
\end{array}\right].\end{equation}

\normalsize

To apply the Lemma \ref{lematecnico}, fix the notation: $A_i(x)=\left(a_{1i}(x), \ldots, a_{mi}(x)\right)$, such that
\begin{equation*}\begin{array}{l}
{A_i(x):=\left\{\begin{array}{ll}
\z_i(x), & i=1, \ldots, n-k;\\
d\M_i(x), & i=n, \ldots, m;\\
\end{array}\right.}\\
\\
{A_{n-k+j-1}(x):=d\d_j(x), \ \ j=2, \ldots, k;} \\
\\
{\a_i:=\left\{\begin{array}{ll}
\mu_i, & i=1, \ldots, n-k; \ (\text{we can suppose } \a_1\neq0, \text{ since } \x(p)\neq \f(p))\\
(\lambda_i+\tilde{\lambda_i}), & i=n, \ldots, m;\\
\end{array}\right.}\\
\\
\a_{n-k+j-1}:=(\b_j+\tilde{\b_j}), \ \ j=2, \ldots, k; \ \ (\tilde{\b_k}=0).\\
\end{array}
\end{equation*}
Since $\x(p)+\displaystyle\sum_{j=n}^{m}{\lambda_id\M_i(p)}+\displaystyle\sum_{j=2}^{k+1}{\beta_{j}d\d_{j}(p)}=0$, by Expression (\ref{expressaodozmais}) we have $$\displaystyle\sum_{i=1}^{m}{\a_iA_i}(p)=0.$$ That is, $$\displaystyle\sum_{i=1}^{m}{\a_ia_{ji}(p)}=0, \, \forall j=1, \ldots, m.$$ Then, applying Lemma \ref{lematecnico} we know that \begin{equation}\a_1\cof(a_{ik}(p))-\a_k\cof(a_{i1}(p))=0, \, \forall i,k=1, \ldots m.\end{equation} 

We also have that \begin{equation*}\begin{array}{ll}
\d_{k+1}&=\det\left(d\M_n, \ldots,d\M_m, d\d_2, \ldots, d\d_k, \z_1, \ldots, \z_{n-k}\right)\\
        &=\det\left(A_n, \ldots, A_m, A_{n-k+1}, \ldots, A_{n-1}, A_1, \ldots, A_{n-k}\right)\\
        &=(-1)^{\varepsilon}\det\left(A_1, \ldots, A_m\right)
\end{array}\end{equation*} where $\varepsilon$ is either equal to zero  or equal to $1$, depending on the number of required permutations between the column vectors of the matrix $$\left(A_n, \ldots, A_m, A_{n-k+1}, \ldots, A_{n-1}, A_1, \ldots, A_{n-k}\right)$$ in order to obtain the matrix $\left(A_1, \ldots, A_m\right)$. Thus,\begin{equation*}\renewcommand{\arraystretch}{2.3}{\setlength{\arraycolsep}{0.13cm}{\begin{array}{lcl}
(-1)^{\varepsilon}d\d_{k+1}&=&\displaystyle\sum_{i,j=1}^{m}{\cof(a_{ij})d a_{ij}}\\
        &=&\displaystyle\sum_{i=1}^{m}{\left(\cof(a_{i1})d a_{i1} + \displaystyle\sum_{j=2}^{m}{\cof(a_{ij})d a_{ij}}\right)}\\
        &\overset{\a_1\neq0}{=}&\displaystyle\sum_{i=1}^{m}{\left(\cof(a_{i1})d a_{i1} + \displaystyle\sum_{j=2}^{m}{\ffrac{\a_j}{\a_1}\cof(a_{i1})d a_{ij}}\right)}\\
\end{array}}}\end{equation*} which implies that, for each $x\in\mathcal{U}$ \begin{equation}\label{graddeltakmaisum}\renewcommand{\arraystretch}{2.3}{\begin{array}{ll}
(-1)^{\varepsilon}\a_1d\d_{k+1}&=\displaystyle\sum_{i=1}^{m}{\left(\a_1\cof(a_{i1})d a_{i1} + \displaystyle\sum_{j=2}^{m}{\a_j\cof(a_{i1})d a_{ij}}\right)}\\
        &=\displaystyle\sum_{i=1}^{m}{\cof(a_{i1})\left[\displaystyle\sum_{j=1}^{m}{\a_jd a_{ij}}\right]}\\
        &=\displaystyle\sum_{i=1}^{m}{ \cof(a_{i1})\L_i }\\
\end{array}}\end{equation} where $\L_i, i=1, \ldots, m$, denote the rows of the Jacobian matrix (\ref{matjacdaexpressao}). {If we denote by $\tilde{L}_i, i=1, \ldots,m$, the row vectors of Jacobian matrix (\ref{Jacmatrix}) at $p$, then we can verify that \begin{equation}\label{equalJacobian}\displaystyle\sum_{i=1}^{m}{ \cof(a_{i1})\L_i }=\displaystyle\sum_{i=1}^{m}{ \cof(a_{i1})\tilde{L}_i}\end{equation} }

{Let us denote the first $m$ row vectors of Matrix (\ref{matrizinicialcasok}) by $L_i, i=1, \ldots,m$, and its last row vector by $L_{\d_{k+1}}$. Based on Expressions (\ref{graddeltakmaisum}) at $p$ and (\ref{equalJacobian}), we replace the row vector $L_{\d_{k+1}}$ by \begin{equation}\label{operacaocomlinhas}(-1)^{\varepsilon}\a_1L_{\d_{k+1}}-\displaystyle\sum_{i=1}^{m}{\cof(a_{i1})L_i},\end{equation} in order to obtain a new last row vector given by $$L_{\d_{k+1}}:=(\underbrace{0, \ldots, 0}_{m}, \g_n, \ldots, \g_m, \tilde{\g_2}, \ldots, \tilde{\g_k}, \tilde{\g_{k+1}}),$$} where \begin{equation*}\renewcommand{\arraystretch}{1}{\begin{array}{lll}
\g_j&=-\displaystyle\sum_{i=1}^{m}{\cof(a_{i1})\ffrac{\partial\M_j}{\partial x_i}}, & \forall j=n, \ldots, m;\\
\\
\tilde{\g_j}&=-\displaystyle\sum_{i=1}^{m}{\cof(a_{i1})\ffrac{\partial\d_j}{\partial x_i}}, & \forall j=2, \ldots, k+1.\\
\end{array}}\end{equation*} Note that, for $j=n, \ldots, m,$  $$\renewcommand{\arraystretch}{1.7}{\begin{array}{l}
\g_j=-\displaystyle\sum_{i=1}^{m}{\cof(a_{i1})\ffrac{\partial\M_j}{\partial x_i}}=-\left|\begin{array}{cccc}
\ffrac{\partial\M_j}{\partial x_1} & a_{12} & \cdots & a_{1m}\\
\vdots & \vdots & \ddots & \vdots\\
\ffrac{\partial\M_j}{\partial x_m} & a_{m2} & \cdots & a_{mm}
\end{array}\right|\\
\\
\Rightarrow\g_j=-\det\left(A_j, A_2, \ldots, A_m\right)=0.\end{array}}$$ For $j=2, \ldots, k$,  $$\renewcommand{\arraystretch}{1.7}{\begin{array}{l}
\tilde{\g_j}=-\displaystyle\sum_{i=1}^{m}{\cof(a_{i1})\ffrac{\partial\d_j}{\partial x_i}}=-\left|\begin{array}{cccc}
\ffrac{\partial\d_j}{\partial x_1} & a_{12} & \cdots & a_{1m}\\
\vdots & \vdots & \ddots & \vdots\\
\ffrac{\partial\d_j}{\partial x_m} & a_{m2} & \cdots & a_{mm}
\end{array}\right|\\
\\
\Rightarrow\tilde{\g_j}=-\det\left(A_{n-k+j-1}, A_2, \ldots, A_m\right)=0.
\end{array}}$$ 

Therefore, after replacing the vector row $L_{\d_{k+1}}$ in the Matrix (\ref{matrizinicialcasok}), we obtain 
\footnotesize
\begin{equation}\label{matrizgammatil}\left[ \renewcommand{\arraystretch}{1}{\setlength{\arraycolsep}{0.04cm}{\begin{array}{cccccccccc}
 & \vdots & \ffrac{\partial \M_n}{\partial x_1} & \cdots & \ffrac{\partial \M_m}{\partial x_1} & \ffrac{\partial \d_2}{\partial x_1} & \cdots & \ffrac{\partial \d_k}{\partial x_1}& \vdots &\ffrac{\partial \d_{k+1}}{\partial x_1} \\
		\Jac\left(\x+\displaystyle\sum_{i=n}^{m}{\lambda_id\M_i}+\displaystyle\sum_{j=2}^{k}{\beta_{j}d\d_{j}}\right) & \vdots & \vdots & \ddots & \vdots & \vdots & \ddots & \vdots & \vdots &\vdots\\
  & \vdots & \ffrac{\partial \M_n}{\partial x_m} & \cdots & \ffrac{\partial \M_m}{\partial x_m} & \ffrac{\partial \d_2}{\partial x_m} & \cdots & \ffrac{\partial \d_k}{\partial x_m}& \vdots &\ffrac{\partial \d_{k+1}}{\partial x_m}  \\
			  \cdots \ \ \cdots \ \ \cdots \ \ \cdots \ \ \cdots \ \ \cdots \ \ \cdots \ \ \cdots & \vdots & \cdots & \cdots & \cdots \ \ \cdots & \cdots & \cdots & \cdots & \vdots & \cdots \ \cdots\\
			  d_x\M_n & \vdots & & & &&&&\vdots &0\\
			  \vdots       & \vdots & & & & &&&\vdots &\vdots \\
 			  d_x\M_m & \vdots & & & & &&&\vdots &0\\       
  		   d_x\d_2 & \vdots &\multicolumn{6}{c}{O_{(m-n+k-1)}}    &\vdots & 0\\
  		   \vdots       & \vdots & &  &&&&&\vdots & \vdots\\
 			   d_x\d_k & \vdots & & & &&&&\vdots &0\\     
 			   \cdots \ \ \cdots \ \ \cdots \ \ \cdots \ \ \cdots \ \ \cdots \ \ \cdots \ \ \cdots & \vdots & \cdots & \cdots & \cdots \ \ \cdots & \cdots & \cdots & \cdots & \vdots & \cdots \ \cdots\\
 			   \vec{0} & \vdots &  \multicolumn{6}{c}{\vec{0}} & \vdots &\tilde{\g_{k+1}}\\             
\end{array}}}\right].
\end{equation} \normalsize

Let us show that $\tilde{\g_{k+1}}(p)\neq0$. We have

$$\renewcommand{\arraystretch}{2}{\begin{array}{cl}
\tilde{\g_{k+1}}&=-\displaystyle\sum_{i=1}^{m}{\cof(a_{i1})\ffrac{\partial\d_{k+1}}{\partial x_i}}\\
                &=-\det(d\d_{k+1}, A_2, \ldots, A_{m})\\
                &=-\det(d\d_{k+1}, \z_2, \ldots, \z_{n-k},d\d_2, \ldots, d\d_k, d\M_n, \ldots, d\M_m).
\end{array}}$$ Suppose that $\tilde{\g_{k+1}}=0$. Since each one of the sets $\{\z_2(p), \ldots, \z_{n-k}(p)\}$ and $\{d\d_{k+1}(p),d\d_2(p), \ldots, d\d_k(p), d\M_n(p), \ldots, d\M_m(p)\}$ consist of linearly independent vectors, there exists $j\in\{2, \ldots, n-k\}$ such that $\z_j(p)\in N_p^{\ast}\S^{k+1}(\o).$ We can suppose that without loss of generality that $j=n-k$, that is, $$\z_{n-k}(p)\in N_p^{\ast}\S^{k+1}(\o)=\langle d\M_n, \ldots, d\M_m,d\d_2, \ldots, d\d_k,d\d_{k+1}\rangle .$$ Since $\x_{|_{\S^{k+1}}}(p)=0$, we have $\x(p)\in N_p^{\ast}\S^{k+1}(\o)$. Thus, by Equation (\ref{eqmi}), we obtain
$$\renewcommand{\arraystretch}{2.5}{\begin{array}{ll}
& \displaystyle\sum_{i=1}^{n-k}{\mu_i\z_i}+ \underbrace{\displaystyle\sum_{i=n}^{m}{\tilde{\lambda_i}d\M_i}+ \displaystyle\sum_{j=2}^{k-1}{\tilde{\b_j}d\d_j}}_{\small \in N_p^{\ast}\S^{k+1}(\o)}\normalsize\in N_p^{\ast}\S^{k+1}(\o)\\
& \Rightarrow \displaystyle\sum_{i=1}^{n-k-1}{\mu_i\z_i}=\displaystyle\sum_{i=1}^{n-k}{\mu_i\z_i}-\mu_{n-k}\z_{n-k}\in N_p^{\ast}\S^{k+1}(\o).
\end{array}}$$ Therefore, $\displaystyle\sum_{i=1}^{n-k-1}{\mu_i\z_i}$ and $\mu_{n-k}\z_{n-k}$ are linearly independent vectors in the vector subspace $\langle\z_1, \ldots, \z_{n-k}\rangle \cap N_p^{\ast}\S^{k+1}(\o)$. That is, $$\dim\left(\langle\z_1(p), \ldots, \z_{n-k}(p)\rangle \cap N_p^{\ast}\S^{k+1}(\o)\right)\geq2.$$ Since $\langle\bar{\o}\rangle =\langle\z_1, \ldots, \z_{n-k}\rangle \oplus\left(\langle\bar{\o}\rangle \cap N_p^{\ast}\S^{k-1}(\o)\right),$ we have 
$$\setlength{\arraycolsep}{0.1cm}{\begin{array}{lcl}
\langle\bar{\o}\rangle \cap N_p^{\ast}\S^{k+1}(\o)
&=&\langle\z_1, \ldots, \z_{n-k}\rangle \cap N_p^{\ast}\S^{k+1}(\o)\\
&\oplus&\langle\bar{\o}\rangle \cap N_p^{\ast}\S^{k-1}(\o).\end{array}}$$ That is, $$\dim\left(\langle\bar{\o}(p)\rangle \cap N_p^{\ast}\S^{k+1}(\o)\right)\geq 2+(k-1)=k+1,$$ which means that $p\in\S^{k+2}(\o)$. But this is a contradiction, since $p\in A_{k+1}(\o)$ by hypothesis and $\S^{k+2}(\o)=\S^{k+1}(\o)\setminus A_{k+1}(\o)$. Therefore $\tilde{\g_{k+1}}(p)\neq0$.\\

Thus, we conclude that the Matrix (\ref{matrizinicialcasok}) is non-singular at $p$ if and only if the Matrix (\ref{matrizgammatil}) is non-singular at $p$, which occurs if and only if the Matrix (\ref{matrizinicialsemdeltakmais1}) is non-singular at the point $p$. 
\end{proof}

\begin{lema}\label{naodegan} For almost every $a\in\R^n\setminus\{\vec{0}\}$, if $p\in A_n(\o)$ then $p$ is a non-degenerate zero of $\x_{|_{\S^{n-1}(\o)}}$.
\end{lema}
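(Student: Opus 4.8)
The plan is to exploit that $\S^{n-1}(\o)$ has dimension $1$, so that non-degeneracy at an isolated zero reduces to a single first-order condition. Fix $p\in A_n(\o)=\S^n(\o)$; by Lemma \ref{ptosansaozeros}, $p$ is a zero of $\x_{|_{\S^{n-1}(\o)}}$ for every $a$, and since $A_n(\o)$ is a $0$-dimensional, hence discrete, submanifold by Lemma \ref{skdimension}, it suffices to establish non-degeneracy at each such $p$. I would choose a local parametrization $\g:(-\epsilon,\epsilon)\to\S^{n-1}(\o)$ with $\g(0)=p$ and $v:=\g'(0)\neq0$ spanning $T_p\S^{n-1}(\o)$, and set $g(t):=\x(\g(t))(\g'(t))$, so that $\x_{|_{\S^{n-1}(\o)}}=g\,dt$ in this chart. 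Because $\dim\S^{n-1}(\o)=1$, the zero $p$ is non-degenerate if and only if $g'(0)\neq0$; this is exactly the one-dimensional instance of the Szafraniec criterion used in Lemmas \ref{nondegenerateequivalenceA1} and \ref{naodegkekmaisum}.

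Next I would decompose $\x$ along the curve. Taking $r=1$ in Definition \ref{defiSk}, on $\mathcal{U}\cap\S^{n-1}(\o)$ we may write $\langle\bar{\o}(x)\rangle=\langle\z_1(x)\rangle\oplus(\langle\bar{\o}(x)\rangle\cap N_x^{\ast}\S^{n-2}(\o))$, and hence $\x(\g(t))=\mu(t)\z_1(\g(t))+\f(t)$ with $\f(t)\in\langle\bar{\o}(\g(t))\rangle\cap N_{\g(t)}^{\ast}\S^{n-2}(\o)$ and $\mu(t)$ smooth. Since $\g$ lies in $\S^{n-1}(\o)\subset\S^{n-2}(\o)$, its velocity $\g'(t)$ is tangent to $\S^{n-2}(\o)$, so $\f(t)(\g'(t))=0$ and therefore $g(t)=\mu(t)\,\psi(t)$, where $\psi(t):=\z_1(\g(t))(\g'(t))$. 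As $p\in A_n(\o)$ forces $\z_1(p)\in N_p^{\ast}\S^{n-1}(\o)$, we get $\psi(0)=0$ and $g(0)=0$, whence $g'(0)=\mu(0)\,\psi'(0)$.

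It then remains to check that both factors are nonzero for almost every $a$. For $\mu(0)$: by Lemma \ref{ptscrticrestasigmak} with $k=n-2$, for almost every $a\in\Rnm$ one has $Z(\x_{|_{\S^{n-2}(\o)}})\cap\S^n(\o)=\emptyset$, so $\x(p)\notin N_p^{\ast}\S^{n-2}(\o)$ and thus $\mu(0)\neq0$. For $\psi'(0)$, the key point is a proportionality $\d_n(\g(t))=c(t)\,\psi(t)$ with $c$ smooth and nowhere zero. Indeed, $N_x^{\ast}\S^{n-1}(\o)$ has codimension $1$ in $T_x^{\ast}M$ and annihilates $T_x\S^{n-1}(\o)=\langle\g'\rangle$, so both $\d_n=\det(d\M_n,\ldots,d\M_m,d\d_2,\ldots,d\d_{n-1},\z_1)$ and $\psi=\z_1(\g')$ isolate the single component of $\z_1$ transverse to $N_x^{\ast}\S^{n-1}(\o)$ and therefore differ by a nonvanishing factor, a determinant-versus-pairing identity in the spirit of Lemma \ref{lematecnicodois}. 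Since $\S^n(\o)$ is cut out inside $\S^{n-1}(\o)$ by $\d_n$ with $\rank(d\M_n,\ldots,d\M_m,d\d_2,\ldots,d\d_n)(p)=m$ (Lemma \ref{eqlocalSk} and the intersection property $I_n$), we have $d\d_n(p)(v)\neq0$, so $(\d_n\circ\g)'(0)=c(0)\psi'(0)\neq0$ and hence $\psi'(0)\neq0$. Combining, $g'(0)=\mu(0)\psi'(0)\neq0$.

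The main obstacle I expect is justifying the proportionality $\d_n(\g(t))=c(t)\psi(t)$ with $c$ nowhere zero, i.e. verifying that the determinant $\d_n$ and the pairing $\psi$ genuinely measure the same one-dimensional complement of $N^{\ast}\S^{n-1}(\o)$, together with the identification of non-degeneracy in the one-dimensional case with $g'(0)\neq0$. A secondary bookkeeping point is that the single measure-zero exceptional set of $a$ supplied by Lemma \ref{ptscrticrestasigmak} must be shown to serve simultaneously at all (countably many) points of $A_n(\o)$, the transversality $d\d_n(p)(v)\neq0$ being structural and independent of $a$.
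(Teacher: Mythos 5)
Your argument is correct, and it reaches the paper's conclusion from the same two essential inputs --- the decomposition $\x=\mu\,\z_1+\f$ with $\f\in\langle\bar{\o}\rangle\cap N^{\ast}\S^{n-2}(\o)$ and $\mu(0)\neq0$ for generic $a$ via Lemma \ref{ptscrticrestasigmak}, together with the rank condition $\rank(d\M_n,\ldots,d\M_m,d\d_2,\ldots,d\d_n)(p)=m$ coming from the intersection property $I_n$ --- but by a genuinely different mechanism. The paper tests non-degeneracy through Szafraniec's determinant criterion: it shows $\d(x)=\lambda(x)\d_n(x)$ for $\d=\det(d\M_n,\ldots,d\M_m,d\d_2,\ldots,d\d_{n-1},\x)$ and then invokes Lemma \ref{lematecnicodois} to replace $d\d(p)$ by $d\d_n(p)$ inside the relevant span; you instead exploit $\dim\S^{n-1}(\o)=1$ to reduce everything to the scalar $g(t)=\x(\g(t))(\g'(t))$ and a product-rule computation $g'(0)=\mu(0)\psi'(0)$. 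The step you flag, $\d_n(\g(t))=c(t)\psi(t)$ with $c$ smooth and nowhere zero, does hold and is elementary: for each $t$, both $\z\mapsto\det(d\M_n,\ldots,d\M_m,d\d_2,\ldots,d\d_{n-1},\z)(\g(t))$ and $\z\mapsto\z(\g'(t))$ are nonzero linear functionals on $T_{\g(t)}^{\ast}M$ with the same kernel $N_{\g(t)}^{\ast}\S^{n-1}(\o)$ (the first because those $m-1$ differentials are independent and span it, the second because $N^{\ast}\S^{n-1}(\o)$ is by definition the annihilator of $T\S^{n-1}(\o)=\langle\g'\rangle$), so they are proportional, and $c(t)$ is seen to be smooth and nonvanishing by evaluating both functionals on a smoothly chosen covector outside the kernel. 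Your secondary bookkeeping point is also handled correctly, since the exceptional set of $a$ comes from the single global statement of Lemma \ref{ptscrticrestasigmak} while the transversality $d\d_n(p)(v)\neq0$ is independent of $a$. What your route buys is transparency --- no Lagrange multipliers and no appeal to the Szafraniec machinery --- at the cost of being special to the one-dimensional stratum; the paper's formulation is the one that scales to the higher-dimensional strata handled in Lemmas \ref{nondegenerateequivalenceA1} and \ref{naodegkekmaisum}.
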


\begin{proof} We know that if $p\in A_n(\o)$ then $\x_{|_{\S^{n-1}(\o)}}(p)=0$. By Szafraniec's characterization \cite[p.149-151]{sza1}, $p$ is a non-degenerate zero of $\x_{|_{\S^{n-1}(\o)}}$ if and only if the following conditions hold:
\begin{enumerate}[$(i)$]
\item $\d(p)=\det(d\M_n, \ldots,d\M_m,d\d_2,\ldots,d\d_{n-1},\x)(p)=0$;\\
\item $\det(d\M_n, \ldots,d\M_m,d\d_2,\ldots,d\d_{n-1},d\d)(p)\neq0$.
\end{enumerate}
Condition $(i)$ is clearly satisfied, since $\x_{|_{\S^{n-1}(\o)}}(p)=0$. Let us verify that condition $(ii)$ also holds.

For each $x\in\S^{n-1}(\o)$ in an open neighborhood $\mathcal{U}\subset M$ of $p$, let $\{\z'(x)\}$ be a smooth basis for a vector subspace supplementary to $\langle\bar{\o}(x)\rangle \cap N_x^{\ast}\S^{n-2}(\o)$ in the vector space $\langle\bar{\o}(x)\rangle $. Since $\x(x)\in\langle\bar{\o}(x)\rangle $, we have $$\x(x)=\lambda(x)\z'(x)+\f(x),$$ where $\lambda(x)\in\R$ and $\f(x)\in \langle\bar{\o}(x)\rangle \cap N_x^{\ast}\S^{n-2}(\o)$, $\forall x\in\mathcal{U}\cap\S^{n-1}(\o)$. 

In particular, if $x\in A_n(\o)$, we know that, for almost every $a\in\R^n\setminus\{\vec{0}\}$, {$\x_{|_{\S^{n-2}(\o)}}(x)\neq0$} and, consequently, $\x(x)\notin N_x^{\ast}\S^{n-2}(\o)$. Thus $\lambda(p)\neq0$. For all $x\in\mathcal{U}\cap\S^{n-1}(\o)$, we obtain $$\setlength{\arraycolsep}{0.07cm}{\begin{array}{ccl}
\d(x)&=&\det(d\M_n, \ldots,d\M_m,d\d_2,\ldots,d\d_{n-1},\lambda \z'+\f)(x)\\
&=&\lambda(x)\det(d\M_n, \ldots,d\M_m,d\d_2,\ldots,d\d_{n-1},\z')(x)\\
&=&\lambda(x)\d_n(x),
\end{array}}$$ with $\d_n(p)=0$ and $\lambda(p)\neq0$. Then, by Lemma \ref{lematecnicodois} we have $$\begin{array}{l}\langle d\M_n(p), \ldots, d\M_m(p), d\d_2(p), \ldots, d\d_{n-1}(p), d\d(p)\rangle \\
=\langle d\M_n(p), \ldots, d\M_m(p), d\d_2(p), \ldots, d\d_{n-1}(p), d(\lambda\d_n)(p)\rangle .
\end{array}$$ However, $d(\lambda\d_n)(x)=d\lambda(x)\d_n(x)+\lambda(x)d\d_n(x)$, $\d_n(p)=0$ and $\lambda(p)\neq0$. Thus, $$\begin{array}{l}\langle d\M_n(p), \ldots, d\M_m(p), d\d_2(p), \ldots, d\d_{n-1}(p), d\d(p)\rangle \\
=\langle d\M_n(p), \ldots, d\M_m(p), d\d_2(p), \ldots, d\d_{n-1}(p), d\d_n(p)\rangle .
\end{array}$$ Therefore, $\det(d\M_n(p), \ldots, d\M_m(p), d\d_2(p), \ldots, d\d_{n-1}(p), d\d(p))\neq0$. 
\end{proof}

\begin{lema}\label{nondegeneratexisk} For almost every $a\in\R^n\setminus\{\vec{0}\}$, the 1-form $\x_{|_{\S^k(\o)}}$ admits only non-degenerate zeros, $k\geq1$.
\end{lema}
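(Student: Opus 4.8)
The plan is to reduce the statement to the non-degeneracy results already established on the open strata $A_j(\o)$, together with the transfer lemmas relating non-degeneracy of $\x$ on consecutive singular sets. First I would record an openness observation: for each $j\geq1$ the stratum $A_j(\o)=\S^j(\o)\setminus\S^{j+1}(\o)$ is an open submanifold of $\S^j(\o)$ of dimension $n-j$, since $\S^{j+1}(\o)$ is a closed submanifold of $\S^j(\o)$ (Remark \ref{akclosure}). Hence for a point $p\in A_j(\o)$ the restrictions $\x_{|_{\S^j(\o)}}$ and $\x_{|_{A_j(\o)}}$ agree on a neighborhood of $p$ in $\S^j(\o)$, so $p$ is a non-degenerate zero of one if and only if it is a non-degenerate zero of the other.

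Next, fixing $k\geq1$, I would choose $a\in\R^n\setminus\{\vec{0}\}$ in the full-measure set on which the conclusions of Lemmas \ref{ptscrticrestasigmak}, \ref{zerosnaodegdeaum}, \ref{zerosnaodegdeak}, \ref{naodegkekmaisum} and \ref{naodegan} hold simultaneously; this is again a full-measure set, being a countable intersection after covering the second countable manifold by countably many of the coordinate neighborhoods used in those lemmas. Let $p$ be a zero of $\x_{|_{\S^k(\o)}}$. By Lemma \ref{ptscrticrestasigmak}, $p\notin\S^{k+2}(\o)$, and since $\S^k(\o)=A_k(\o)\cup A_{k+1}(\o)\cup\S^{k+2}(\o)$ (Remark \ref{akclosure}) we get $p\in A_k(\o)\cup A_{k+1}(\o)$.

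If $p\in A_k(\o)$, the openness observation together with Lemma \ref{zerosnaodegdeaum} (for $k=1$) or Lemma \ref{zerosnaodegdeak} (for $k\geq2$) shows directly that $p$ is a non-degenerate zero of $\x_{|_{\S^k(\o)}}$. If $p\in A_{k+1}(\o)$, then by Lemma \ref{lemazerosrestricoes} $p$ is also a zero of $\x_{|_{\S^{k+1}(\o)}}$; when $k+1\leq n-1$ the openness observation applied to $A_{k+1}(\o)\subset\S^{k+1}(\o)$ and Lemma \ref{zerosnaodegdeak} (valid since $k+1\geq2$) make $p$ a non-degenerate zero of $\x_{|_{\S^{k+1}(\o)}}$, and then Lemma \ref{naodegkekmaisum} transfers this to non-degeneracy of $\x_{|_{\S^k(\o)}}$ at $p$; when $k+1=n$, i.e. $p\in A_n(\o)$, Lemma \ref{naodegan} gives directly that $p$ is a non-degenerate zero of $\x_{|_{\S^{n-1}(\o)}}$. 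In every case $p$ is non-degenerate, which proves the lemma.

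The step I expect to be the main obstacle is the management of the ``for almost every $a$'' quantifier across the whole of $\S^k(\o)$: each cited lemma yields a full-measure set of admissible $a$ relative to a local chart, and one must check that these combine into a single full-measure set valid globally, which requires covering the manifold by countably many charts and using that a countable intersection of full-measure sets still has full measure. A secondary point requiring care is the boundary case $k=n-1$, where the transfer Lemma \ref{naodegkekmaisum} cannot be used to pass out of the zero-dimensional stratum $A_n(\o)$ and must be replaced by the direct statement of Lemma \ref{naodegan}.
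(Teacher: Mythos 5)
Your proposal is correct and follows essentially the same route as the paper's own proof: the same case split $p\in A_k(\o)$ versus $p\in A_{k+1}(\o)$ via Lemma \ref{ptscrticrestasigmak}, the same use of openness of $A_j(\o)$ in $\S^j(\o)$ together with Lemmas \ref{zerosnaodegdeaum} and \ref{zerosnaodegdeak}, the transfer via Lemma \ref{naodegkekmaisum}, and the separate boundary case $p\in A_n(\o)$ handled by Lemma \ref{naodegan}. Your explicit attention to combining the various full-measure sets of admissible $a$ over a countable chart cover is a point the paper leaves implicit, but it does not change the argument.
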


\begin{proof} Suppose that $\x_{|_{\S^k(\o)}}(p)=0$. Then, for almost every $a\in\R^n\setminus\{\vec{0}\}$, $p\in A_{k}(\o)\cup A_{k+1}(\o)$ since $Z(\x_{|_{\S^k(\o)}})\cap\S^{k+2}(\o)=\emptyset$ by Lemma \ref{ptscrticrestasigmak} and $\S^{k}(\o)=A_k(\o)\cup A_{k+1}(\o)\cup\S^{k+2}(\o)$.

If $p\in A_k(\o)$ then $\x_{|_{A_k(\o)}}(p)=0$. Since $\x_{|_{A_k(\o)}}$ admits only non-degenerate zeros and $A_k(\o)\subset\S^k(\o)$ is a open subset, we conclude that $p$ is a non-degenerate zero of $\x_{|_{\S^k(\o)}}$.

If $p\in A_{k+1}(\o)$ and $k< n-1$ then $\x_{|_{\S^{k+1}(\o)}}(p)=0$. In particular, since $A_{k+1}(\o)\subset\S^{k+1}(\o)$ is an open subset then $\x_{|_{A_{k+1}(\o)}}(p)=0$. By Lemmas \ref{zerosnaodegdeaum} and \ref{zerosnaodegdeak}, $\x_{|_{A_{k+1}(\o)}}$ admits only non-degenerate zeros, and since $A_{k+1}(\o)$ is an open set of $\S^{k+1}(\o)$, we conclude that $p$ is a non-degenerate zero of $\x_{|_{\S^{k+1}(\o)}}$. Therefore, by Lemma \ref{naodegkekmaisum}, $p$ is non-degenerate zero of $\x_{|_{\S^k(\o)}}$. Finally, if $p\in A_{n}(\o)$, by Lemma \ref{naodegan}, $p$ is a non-degenerate zero of $\x_{|_{\S^{n-1}(\o)}}$.

\end{proof}


\begin{teo}\label{fukudaparacampos} {Let $\o=(\o_1, \ldots, \o_n)$ be a Morin $n$-coframe defined on an $m$-dimensional compact manifold $M$. Then, $$\chi(M)\equiv\displaystyle\sum_{k=1}^{n}{\chi(\overline{A_k(\o)})} \mod 2.$$}
\end{teo}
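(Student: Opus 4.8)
The plan is to fix a single generic $a\in\R^n\setminus\{\vec{0}\}$, count modulo $2$ the zeros of $\x=\sum_{i=1}^{n}a_i\o_i$ and of each restriction $\x_{|_{\S^k(\o)}}$, and then convert these counts into Euler characteristics via the Poincar\'e--Hopf theorem for $1$-forms on the compact manifolds $\overline{A_k(\o)}=\S^k(\o)$. Since Lemmas \ref{lemainterzeroscomsigma2}, \ref{ptscrticrestasigmak}, \ref{nondegeneratexi} and \ref{nondegeneratexisk} each hold for almost every $a$, and there are only finitely many of them, their common validity still holds on a full-measure set; I would choose $a$ in that set. For such $a$ the $1$-form $\x$ and every restriction $\x_{|_{\S^k(\o)}}$ ($k=1,\dots,n-1$) have only non-degenerate zeros, with $Z(\x)\cap\S^2(\o)=\emptyset$ and $Z(\x_{|_{\S^k(\o)}})\cap\S^{k+2}(\o)=\emptyset$.

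The first genuine step is the Poincar\'e--Hopf input. By Remark \ref{akclosure} each $\overline{A_k(\o)}=\S^k(\o)$ is a closed, hence compact, submanifold of the compact manifold $M$, and a non-degenerate zero of a $1$-form has index $\pm1\equiv1\pmod 2$. Applying the Poincar\'e--Hopf theorem to $\x$ on $M$ and to $\x_{|_{\S^k(\o)}}$ on $\S^k(\o)$ therefore gives $\chi(M)\equiv\#Z(\x)$ and $\chi(\overline{A_k(\o)})\equiv\#Z(\x_{|_{\S^k(\o)}})\pmod 2$ for $k=1,\dots,n$ (for $k=n$ one uses that $\S^n(\o)=A_n(\o)$ is a finite set, so $\x_{|_{\S^n(\o)}}\equiv0$ and the count is $\#A_n(\o)=\chi(\S^n(\o))$).

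Next I would organize the combinatorics. By Lemma \ref{ptscrticrestasigmak} the zeros of $\x_{|_{\S^k(\o)}}$ all lie in $A_k(\o)\cup A_{k+1}(\o)$, so setting $\alpha_k=\#(Z(\x_{|_{\S^k(\o)}})\cap A_k(\o))$ and $\beta_k=\#(Z(\x_{|_{\S^k(\o)}})\cap A_{k+1}(\o))$ one has $\#Z(\x_{|_{\S^k(\o)}})=\alpha_k+\beta_k$. The decisive identity is $\beta_k=\alpha_{k+1}$: Lemma \ref{lemazerosrestricoes} says a point of $A_{k+1}(\o)$ is a zero of $\x_{|_{\S^{k+1}(\o)}}$ if and only if it is a zero of $\x_{|_{\S^{k}(\o)}}$, which identifies the set counted by $\beta_k$ with the set counted by $\alpha_{k+1}$. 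At the ends, Lemma \ref{nondegeneratexi} places every zero of $\x$ in $A_1(\o)$, giving $\#Z(\x)=\alpha_1$, while Lemma \ref{ptosansaozeros} makes every point of $A_n(\o)$ a zero of $\x_{|_{\S^{n-1}(\o)}}$, consistently with $\beta_{n-1}=\alpha_n=\#A_n(\o)$.

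Finally I would assemble the congruence by a telescoping sum:
\begin{equation*}
\sum_{k=1}^{n}\chi(\overline{A_k(\o)})\equiv\sum_{k=1}^{n-1}(\alpha_k+\alpha_{k+1})+\alpha_n=\alpha_1+2\sum_{k=2}^{n-1}\alpha_k+2\alpha_n\equiv\alpha_1\pmod 2,
\end{equation*}
whereas $\chi(M)\equiv\#Z(\x)=\alpha_1\pmod 2$, which gives the claimed equality $\chi(M)\equiv\sum_{k=1}^{n}\chi(\overline{A_k(\o)})\pmod 2$. The heavy analytic work is already done in the preceding lemmas, so the main obstacle is the correct bookkeeping in this last step: the collapse of the sum rests entirely on the exact pairing $\beta_k=\alpha_{k+1}$ of Lemma \ref{lemazerosrestricoes}, and one must check that these paired zeros are simultaneously non-degenerate (so that they really are counted by Poincar\'e--Hopf on both adjacent strata), which is precisely what Lemmas \ref{nondegenerateequivalenceA1} and \ref{naodegkekmaisum} provide; the two boundary contributions at $k=0$ and $k=n$, controlled by Lemmas \ref{nondegeneratexi}, \ref{ptosansaozeros} and \ref{naodegan}, are where the count must be handled with the most care.
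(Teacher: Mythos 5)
Your proposal is correct and follows essentially the same route as the paper: a generic choice of $a$, Poincar\'e--Hopf on $M$ and on each closed stratum $\S^k(\o)$, the decomposition of $Z(\x_{|_{\S^k(\o)}})$ into the parts lying in $A_k(\o)$ and $A_{k+1}(\o)$ via Lemmas \ref{ptscrticrestasigmak} and \ref{lemazerosrestricoes}, and the resulting telescoping cancellation modulo $2$. Your $\alpha_k$, $\beta_k$ bookkeeping is just a notational repackaging of the paper's identities, and your attention to the simultaneous non-degeneracy of paired zeros (Lemmas \ref{nondegenerateequivalenceA1} and \ref{naodegkekmaisum}) correctly identifies the input hidden inside Lemma \ref{nondegeneratexisk}.
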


\begin{proof} Let us denote by $Z(\f)$ the set of zeros of a 1-form $\f$ and let us denote by $\#Z(\f)$ the number of elements of this set, whenever $Z(\f)$ is finite. {Let $$\x(x)=\displaystyle\sum_{i=1}^{n}{a_i\o_i(x)}$$ be a 1-form with $a=(a_1, \ldots, a_n)\in\R^n\setminus\{\vec{0}\}$ satisfying the generic conditions of the previous lemmas of Sections 3 and 4.}

Since $M$ is compact and the submanifolds $\S^k(\o)$ are closed in $M$, by the Poincaré-Hopf Theorem for 1-forms we obtain \begin{itemize}
\item $\chi(M)\equiv\#Z(\x) \mod2$;
\item $\chi(\overline{A_k(\o)})=\chi(\S^k(\o))\equiv\#Z(\x_{|_{\S^k(\o)}}) \mod2$, for $k=1, \ldots, n-1$;
\item $\chi(\overline{A_n(\o)})=\chi(\S^n(\o))\equiv\#Z(\x_{|_{\S^n(\o)}}) \mod2$.
\end{itemize} 

By Lemma \ref{zeroszsobresigma1}, if $p\in Z(\x)$ then $p\in\S^1(\o)$ and $\x_{|_{\S^1(\o)}}(p)=0$. Moreover, by Lemma \ref{lemainterzeroscomsigma2}, $Z(\x)\cap\S^2(\o)=\emptyset$. Thus $p\in A_1(\o)$. On the other hand, Lemma \ref{lemazerosrestricoes} shows that if $p\in Z(\x_{|_{\S^1(\o)}})\cap A_1(\o)$ then $p$ is also a zero of the 1-form $\x$. Thus, $$\#Z(\x)\equiv\#Z(\x_{|_{\S^1(\o)}}\cap A_1(\o)) \mod2.$$

By Lemma \ref{ptscrticrestasigmak}, if $p\in Z(\x_{|_{\S^k(\o)}})$ then $p\notin\S^{k+2}(\o)$. Thus, $p\in A_k(\o)\cup A_{k+1}(\o)$ and, for $k=1, \ldots, n-1$, we have $$\#Z(\x_{|_{\S^k(\o)}})\equiv\#Z(\x_{|_{\S^k(\o)}}\cap A_k(\o))+ \#Z(\x_{|_{\S^k(\o)}}\cap A_{k+1}(\o))\mod2.$$

By Lemma \ref{lemazerosrestricoes}, we also have $$\#Z(\x_{|_{\S^k(\o)}}\cap A_{k+1}(\o))=\#Z(\x_{|_{\S^{k+1}(\o)}}\cap A_{k+1}(\o))$$ and by Lemma \ref{ptosansaozeros}, $$\#A_n(\o)=\#Z(\x_{|_{\S^{n-1}(\o)}}\cap A_{n}(\o)).$$

Then, \begin{itemize}
\item $\chi(M)\equiv\#Z(\x_{|_{\S^1(\o)}}\cap A_1(\o)) \mod2$;
\item For $k=1, \ldots, n-1$, $$\chi(\overline{A_k(\o)})\equiv\#Z(\x_{|_{\S^k(\o)}}\cap A_k(\o))+ \#Z(\x_{|_{\S^{k+1}(\o)}}\cap A_{k+1}(\o))\mod2;$$
\item $\chi(\overline{A_n(\o)})=\#Z(\x_{|_{\S^{n-1}(\o)}}\cap A_{n}(\o))$.
\end{itemize}
Therefore, $$\renewcommand{\arraystretch}{1.7}{\begin{array}{lll}
\chi(M)+\displaystyle\sum_{k=1}^{n}{\chi(\overline{A_k(\o)})}&\equiv&2\#Z(\x_{|_{\S^1(\o)}}\cap A_1(\o))\\
&+&2 \#Z(\x_{|_{\S^2(\o)}}\cap A_2(\o))+\ldots\\
&+&2 \#Z(\x_{|_{\S^{n-1}(\o)}}\cap A_{n-1}(\o))\\
&+&2 \#Z(\x_{|_{\S^{n-1}(\o)}}\cap A_{n}(\o))\mod2\\
&\equiv&  0\mod2.
\end{array}}$$\end{proof}

{As for the definition of Morin $n$-coframes, the results presented in Sections \ref{s2} and \ref{s3} of this paper also can be naturally adapted to the context of $n$-frames.} In particular, the main theorems that have been used, as the Poincaré-Hopf Theorem and the Szafraniec's characterization, have their respective versions for vector fields.

Finally, we end the paper with a very simple example. Let us verify that Theorem \ref{fukudaparacampos} indeed holds for the Morin $2$-frame $V=(V_1,V_2)$ presented in the Example \ref{ex3}. To do this, it is enough to see that the torus $\emph{T}$ is a compact manifold with $\chi(\emph{T})=0$. Moreover, $\overline{A_1(V)}=\S^1(V)$ is given by two circles in $\R^3$ and $\overline{A_2(V)}$ consists of four points, so that $\chi(\overline{A_1(V)})=0$ and $\chi(\overline{A_2(V)})=4$. Therefore, $$\chi(\emph{T})\equiv \chi(\overline{A_1(V)})+\chi(\overline{A_2(V)}) \mod 2.$$


\end{document}